\setlist{listparindent=\parindent, parsep=0pt, topsep=0pt, itemsep=0pt}
\newtheorem{Theorem}{Theorem}[section]
\newtheorem{Corollary}[Theorem]{Corollary}
\newtheorem{Lemma}[Theorem]{Lemma}
\newtheorem{Proposition}[Theorem]{Proposition}
\theoremstyle{definition}
\theoremstyle{remark}
\newtheorem{Remark}[Theorem]{Remark}
\newcommand\ZZ{{\mathbb{Z}}}
\newcommand\NN{{\mathbb{N}}}
\newcommand\U{\mathscr{U}}
\newcommand\N{\mathcal{N}}
\renewcommand\P{\mathfrak{p}}
\newcommand\Q{\mathfrak{q}}
\renewcommand\S{\mathcal{S}}
\newcommand\X{\mathscr{X}}
\newcommand\Y{\mathscr{Y}}
\DeclareMathOperator\lcm{lcm}
\DeclareMathOperator\Aut{Aut}
\DeclareMathOperator\coker{coker}
\DeclareMathOperator\Tor{Tor}
\DeclareMathOperator\Tot{Tot}
\DeclareMathOperator\gldim{gldim}
\DeclareMathOperator\pdim{pdim}
\DeclareMathOperator\im{im}
\newcommand\Id{\mathrm{Id}}
\newcommand\op{\mathrm{op}}
\newcommand\tsum{{\textstyle\sum}}
\newcommand\tint{{\textstyle\int}}
\newcommand\wt[1]{{(#1)}}
\newcommand\q[2]{{[#1]_{#2}}}
\def\place{\mathord-}
\newcommand{\oY}{\overline{Y}}
\newcommand{\oH}{\overline{H}}
\newcommand{\oX}{\overline{X}}
\newcommand{\oB}{\overline{B}}
\newcommand{\oV}{\overline{V}}
\newcommand{\YH}{Y \wedge H}
\newcommand{\YX}{Y \wedge X}
\newcommand{\HX}{H \wedge X}
\newcommand{\YHX}{Y \wedge H \wedge X}
\newcommand{\oYH}{\oY \wedge \oH}
\newcommand{\oYX}{\oY \wedge \oX}
\newcommand{\oHX}{\oH \wedge \oX}
\newcommand{\oYHX}{\oY \wedge \oH \wedge \oX}
\newcommand{\hY}{\hat{Y}}
\newcommand{\hH}{\hat{H}}
\newcommand{\hX}{\hat{X}}
\newcommand{\hV}{\hat{V}}
\newcommand{\hYH}{\hY \wedge \hH}
\newcommand{\hYX}{\hY \wedge \hX}
\newcommand{\hHX}{\hH \wedge \hX}
\newcommand{\hYHX}{\hY \wedge \hH \wedge \hX}
\begin{document}

\title{Hochschild homology and cohomology\\ 
of Generalized Weyl algebras: the quantum case}

\author{Andrea Solotar \and Mariano Su\'arez-Alvarez \and Quimey Vivas
\thanks{This work has been supported by the projects  UBACYTX212,
PIP-CONICET 112-200801-00487, PICT-2007-02182, UBACYT 20020090300102 IJ and
MATHAMSUD-NOCOMALRET. The first and second authors are  research members of
CONICET (Argentina). A.~Solotar thanks Universidad de Valpara\'iso (Project
MECESUP UVA0806).}}

\date{June 27, 2011}

\maketitle

\begin{abstract}
We determine the Hochschild homology and cohomology of the generalized Weyl
algebras of rank one which are of `quantum' type in all but a few
exceptional cases. 

\medskip

2010 MSC: 16E40, 16E65, 16U80, 16W50, 16W70.
\end{abstract}

\section{Introduction}

The Hochschild cohomology $HH^*(A)$ and homology $HH_*(A)$ of a $k$-algebra
$A$ are invariants which are usually hard to compute. For a long time it
has been known that they are related to the smoothness of the algebra. For
example, if $A$ is a commutative algebra $A$ essentially of finite type
---~{\em i.e.}, a quotient of a polynomial algebra on a finite number of
variables by an ideal, or a localization of one of these algebras~---
several authors \cite{A-V} \cite{BACH} \cite{HKR} \cite{Ro} \cite{Ro2} have
obtained results which can be summarized in the statement
  \[
  \text{\itshape If $k$ is a field, $\gldim(A) < \infty$ if and only if there
  exists $n$ such that $HH_i(A)=0$, for all $i>n$.}
  \]
Some years ago, L.\,Avramov and S.\,Iyengar \cite{A-I} proved a cohomological
version of this property:
  \[
  \text{\itshape if $k$ is a field, $\gldim(A) < \infty$ if and only if
  there exists $n$ such that $HH^i(A)=0$, for all $i>n$.}
  \]
The non commutative case is different. After D.\,Happel asked in \cite{Ha}
  \[
  \begin{minipage}{0.9\displaywidth}
  \itshape given a finite dimensional $k$-algebra $A$, is it true that the
  vanishing of $HH^i(A)$ for all large $i$ implies that
  $\gldim(A)< \infty$?
  \end{minipage}
  \]
several articles have been devoted to trying to provide an affirmative
answer. However, in \cite{BGMS} a counterexample was given, 
the ``small'' algebra $k\langle x,y\rangle/(x^2, y^2, xy+qyx)$, with $q\in
k^*$ not a root of unity. Subsequently, Y.\,Han \cite{Han} showed that the
Hochschild homology of this algebra does not vanish in infinitely many
degrees, proposing thus what is now known as \emph{Han's conjecture}:
  \[
  \begin{minipage}{0.9\displaywidth}
  \itshape If all the higher Hochschild homology groups of a finite
  dimensional algebra vanish, then the global dimension of the algebra is
  finite.
  \end{minipage}
  \]
This conjecture has been proved to be true 
for commutative algebras essentially of finite type, not necessarily finite dimensional 
\cite{A-V, BACH},
for finite dimensional graded local algebras \cite{B-M},
for finite dimensional monomial algebras \cite{Han},
for finite dimensional graded cellular algebras in characteristic zero \cite{B-M},
for finite dimensional Koszul algebras in characteristic zero \cite{B-M}, 
for  quantum complete intersections \cite{B-E}, 
for  finite dimensional graded local algebras satisfying the hypotheses of Theorem II of \cite{S-VP},
and for algebras satisfying the hypotheses of Theorem I of \cite{S-VP}.

The general answer is, however, still unknown. The proof of this last case
makes use of the fact that Hochschild homology is functorial, which is not
valid for Hochschild cohomology. The results of Theorem I of \cite{S-VP}
led us to consider the conjecture without the hypothesis of $A$ being
finite dimensional.

It is worth to notice that the proof of the conjecture ---\,homological and
cohomological\,--- in the commutative case, uses the existence of a {\em
model}, that is, a differential graded algebra quasi-isomorphic to the
inital one, and having thus isomorphic Hochschild homology and cohomology.
The importance of the model, stated informally, is that it allows, in a
certain way, to treat more easily the singularities of the algebra. In
other words, the difficulty is no longer in the algebra itself, but in the
differentials of the model. This kind of model, coming from algebraic
topology, always exists in the commutative essentially of finite type case,
but usually not in the non commutative case. One example of a situation
where it exists is treated in Theorem II of \cite{S-VP}. Also, for Koszul
algebras, it is clear that the complex which can be used to compute
Hochschild (co)homology is similar to the one constructed from a model in
the commutative case. So, in our opinion, and although the methods used in
\cite{B-M}, \cite{Han}, \cite{B-E} are different, Han's conjecture has been
proven, up to now, for algebras which have some kind of ``model''.

Following this point of view, in this article we prove it for a family of
non commutative algebras $A_q$, the quantum {\em generalized Weyl
algebras}, which we shall call simply {\em Bavula algebras}. For this we
compute their Hochschild cohomology and homology, completing in this way
the results of \cite{FSS}, leaving out only a few cases. We get the
following two results:

\begin{Theorem}
Let $A=A(\sigma_q,a)$ be a Bavula algebra with $q\in k^\times$ not a root of $1$. Then
  \begin{gather*}
  HH_p(A)=\begin{dcases}
          k^N\oplus \bigoplus_{r\in\ZZ\setminus0} k & \text{if $p=0$;}\\
          k^M\oplus \bigoplus_{r\in\ZZ\setminus0} k & \text{if $p=1$;}\\
          k^M &\text{if $p\geq 2$;}
         \end{dcases}
         \\
  HH^p(A)=\begin{cases}
          k& \text{if $p=0,1$;}\\
          k^N & \text{if $p=2$;}\\
          k^M &\text{if $p\geq 3$;}
         \end{cases}
  \end{gather*}
where $N=\deg a$ and $M=\deg(a:a')$.
\end{Theorem}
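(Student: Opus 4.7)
The plan is to proceed from a small bimodule resolution of $A$ over its enveloping algebra $A^e$. For a rank one generalized Weyl algebra $A=A(\sigma_q,a)$ with base $R=k[h]$ such a resolution, of length three, is already constructed and used in \cite{FSS}; its terms are of the form $A\otimes_R A$ and its differentials encode the defining relations $xy=a$, $yx=\sigma_q(a)$, $xr=\sigma_q(r)x$ and $yr=\sigma_q^{-1}(r)y$. Applying $A\otimes_{A^e}-$ and $\mathrm{Hom}_{A^e}(-,A)$ respectively yields short complexes computing $HH_*(A)$ and $HH^*(A)$.

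The essential simplification comes from the natural $\ZZ$-grading on $A$, with $\deg x=1$, $\deg y=-1$ and $h$ of degree zero, under which each homogeneous component is free of rank one over $R$. The bimodule resolution can be chosen homogeneous, so both Hochschild complexes decompose as
\[
HH_*(A)=\bigoplus_{r\in\ZZ} HH_*(A)_r,\qquad HH^*(A)=\bigoplus_{r\in\ZZ} HH^*(A)_r.
\]
Each summand is computed by a short complex of free $R$-modules whose differentials are assembled from multiplication by $a$, multiplication by $\sigma_q(a)-a$, and the operator $1-\sigma_q^r$. For $r\neq 0$ the ``quantum'' hypothesis is decisive: $1-\sigma_q^r$ sends $h^n$ to $(1-q^{rn})h^n$, which is invertible except in degree zero, so both its kernel and its cokernel on $k[h]$ are one-dimensional. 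This immediately produces the uniform contributions $\bigoplus_{r\in\ZZ\setminus 0}k$ visible in $HH_0$ and $HH_1$, and kills all higher degrees from the $r\neq 0$ summands.

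The main obstacle is the $r=0$ part, where $1-\sigma_q^0=0$ and the differentials reduce to multiplication operators involving $a$ and its ``quantum derivative'' $(\sigma_q(a)-a)/(\sigma_q(h)-h)$ on $k[h]$. The cokernel of multiplication by $a$ is $N$-dimensional and accounts for the $k^N$ that appears in $HH_0$ and $HH^2$, while in all sufficiently high homological degrees the $r=0$ complex stabilises and its (co)homology is controlled by the codimension in $k[h]$ of the ideal generated by $a$ together with the quantum derivative; that codimension is precisely $M=\deg(a:a')$. Assembling the $r=0$ and $r\neq 0$ contributions then yields the two displayed formulas, and the remaining care concerns identifying exactly when the degree-zero part collapses further, which is why $HH^0$ and $HH^1$ come out equal to $k$ rather than $k^N$ or $k^M$; this collapse is what requires the hypothesis that $q$ is not a root of unity together with the exclusion of a few exceptional $a$ alluded to in the abstract.
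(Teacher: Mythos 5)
Your starting point is unavailable: for $M=\deg(a:a')>0$ the theorem itself asserts $HH_p(A)=k^M\neq0$ for every $p\geq2$ (and $HH^p(A)=k^M$ for every $p\geq3$), so $A$ has infinite projective dimension as an $A^e$-module and cannot admit a projective bimodule resolution of length three. The resolution actually used here --- and the one in \cite{FSS}, which proceeds ``in two steps'' through the Smith algebra $B_l$ with $l=\sigma(a)-a$ exactly as in this paper --- is of infinite length: it is the total complex of a third-quadrant double complex whose $n$-th term is $\bigoplus_{q\ge0} A\otimes\bigwedge^{n-2q}V\otimes A$, and the $k^M$ persisting in all high homological degrees comes precisely from the infinitely many weight-zero column-homology groups of that double complex. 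A length-three complex of the kind you describe would at best compute one page of the resulting spectral sequence, not $HH_*(A)$. Your weight decomposition, and the observation that $1-\sigma_q^r$ acts on $h^nk$ by the invertible scalar $1-q^{rn}$ for $n\neq0$ when $q$ is not a root of unity, are correct and are indeed how the paper disposes of the weights $r\neq0$; but these devices sit on top of the infinite resolution, they do not replace it.

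There is a second, independent error in your identification of the stable weight-zero (co)homology. You claim it is the codimension of the ideal generated by $a$ and the quantum derivative $(\sigma_q(a)-a)/(\sigma_q(h)-h)=\sum_i\alpha_i\q{i}{q}h^{i-1}$, and that this codimension equals $M$. It does not in general: for $a=(h-1)^2$ one has $(a:a')=h-1$ and $M=1$, while the quantum derivative $(q+1)h-2$ is coprime to $a$ for every $q\neq1$, so the ideal you describe is all of $k[h]$ and your recipe returns $0$. The operator that actually governs weight zero is the ordinary derivative $a'$: in weight $r$ the differentials produce $\sum_i\alpha_i\q{i}{q^r}h^{i-1}$ (the $q$-powers arise from commuting $h$ past $y^r$), and only for $r=0$ does this reduce to $a'$. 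It is then the equation $p_1\sigma(a)+p_2\sigma(a')+\sigma(a)p_3=0$, together with the coprimality of $a/c$ and $a'/c$ for $c=(a:a')$, that yields $k[h]/(c)\cong k^M$ in all high degrees. Both the existence of the complex you propose and the computation of its stable homology therefore need to be redone along the lines above.
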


\begin{Theorem}
Let $A=A(\sigma_q,a)$ be a Bavula algebra with $q\in k^\times$ such that $q^e=1$. Then
  \begin{gather*}
  HH_p(A)=\begin{dcases}
          k^{\eta(a)} \oplus \bigoplus_{r\in\ZZ\setminus0} \S & \text{if $p=0$;}\\
          k^{\eta(c)} \oplus \bigoplus_{r\in \ZZ\setminus e\ZZ} \bigl(k[h]/(h)\bigr)
            \oplus \bigoplus_{r\in e\ZZ} \S^2
                & \text{if $p=1$;}\\
          k[h]/(c) \oplus \bigoplus_{r\in e\ZZ}\S 
                & \text{if $p=2$;}\\
          k[h]/(c) &\text{if $p\geq 3$.}
         \end{dcases}
         \\
  HH^p(A)=\begin{dcases}
          \bigoplus_{r\in e\ZZ}\S & \text{if $p=0$;}\\
          \bigoplus_{r\in e\ZZ}\S^2 & \text{if $p=1$;}\\
          k^{\eta(a/c)} \oplus k[h]/c \oplus \bigoplus_{r\in e\ZZ}\S &\text{if $p=2$;}\\
          k[h]/c &\text{if $p\geq 3$.}
         \end{dcases}
  \end{gather*}
where, for a polynomial $f\in k[h]$, we write $\eta(f)=\deg f-\tfrac1e\deg
\N(f)$ with $\N$ the operator defined in section~\ref{notaciones} below,
$N=\deg a$, $c=(a:a')$ and and $M=\deg c$.
\end{Theorem}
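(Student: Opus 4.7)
The plan is to apply the small projective bimodule resolution of $A$ constructed in \cite{FSS} and compute Hochschild homology and cohomology by taking $A\otimes_{A^e}-$ and $\mathrm{Hom}_{A^e}(-,A)$ respectively. Because the resolution is compatible with the $\ZZ$-grading on $A$ given by $\deg h=0$, $\deg x=1$, $\deg y=-1$, both resulting complexes split as direct sums over the weight $r\in\ZZ$ of the adjoint action, and for each $r$ the weight-$r$ sub-complex is a complex of $k[h]$-modules whose differentials are multiplication by polynomials built out of $a$, $\sigma_q^r(a)$ and $h-\sigma_q^r(h)=(1-q^r)h$. The computation thus reduces to computing the kernels and cokernels of these multiplication maps for each $r$.

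The hypothesis $q^e=1$ imposes a dichotomy on the weights. When $r\notin e\ZZ$ the scalar $1-q^r$ is nonzero, so multiplication by $h-\sigma_q^r(h)$ on $k[h]$ is injective with cokernel $k\cong k[h]/(h)$, and the weight-$r$ complex is essentially acyclic except for a single copy of $k[h]/(h)$ in homological degree~$1$. This explains, as in the generic case treated in Theorem~1, the summands $\bigoplus_{r\in\ZZ\setminus e\ZZ}k[h]/(h)$ appearing in $HH_1(A)$ and their absence in higher degrees, and we expect to be able to adapt the arguments of \cite{FSS} essentially unchanged for this range of $r$.

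When $r\in e\ZZ$ one has $\sigma_q^r=\Id$ on $k[h]$, so all maps of the form $h-\sigma_q^r(h)$ vanish identically and the weight-$r$ sub-complex collapses; its only remaining non-trivial differentials are multiplication by $a$ and by its formal derivative $a'$ on several copies of $k[h]$. The polynomial $c=(a:a')$, which is the quotient of $a$ by $\gcd(a,a')$ and hence the squarefree part of $a$, controls the resulting kernels and cokernels: multiplication by $a'$ on $k[h]/(a)$ has kernel $k[h]/(c)$, and assembling these contributions gives the summands $\S$ and $\S^2$ visible in the stated formulas. At weight $r=0$, the additional finite-dimensional summands $k^{\eta(a)}$, $k^{\eta(c)}$ and $k^{\eta(a/c)}$ arise because the norm operator $\N(f)=\prod_{i=0}^{e-1}\sigma_q^i(f)$ extracts precisely the part of $f$ whose root multiset is a union of complete $\sigma_q$-orbits of size $e$, so that $\eta(f)=\deg f-\tfrac1e\deg\N(f)$ counts the ``non-invariant'' roots that survive the relevant kernel/cokernel computations.

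The main obstacle will be the careful bookkeeping in the weight-$0$ stratum, where one must identify precisely which roots of $a$, $c$ and $a/c$ lie on complete $\sigma_q$-orbits of size $e$ and which do not, and then show that the contribution of the ``stray'' roots in each homological degree is exactly the corresponding value of $\eta$. A secondary difficulty is treating the cohomological side in parallel with the homological one: although the general strategy is the same, the direction of the differentials is reversed and so the identification of the $\eta$-summands must be redone from scratch, in particular for the term $k^{\eta(a/c)}$ in $HH^2(A)$, which has no direct analogue in the homological formulas.
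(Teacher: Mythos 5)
Your proposal is an outline rather than a proof, and two of its central simplifications are false, so the plan as written would not yield the theorem. The most serious problem is the claim that for singular weights $r\in e\ZZ$ ``all maps of the form $h-\sigma_q^r(h)$ vanish identically and the weight-$r$ sub-complex collapses,'' leaving only multiplication by $a$ and $a'$. Only the power $\sigma^r$ becomes the identity there; the differentials of the complex involve $\sigma$ itself through twisted difference operators such as $1-q\sigma$ and $\psi_{f,l}:p\mapsto(\sigma-q^l)(fp)$, and these do not vanish on singular weights. They are precisely what produces the $\S$-module structure of the answer and the correction terms $\eta(a)$, $\eta(c)$, $\eta(a/c)$: in the paper these come from Lemma~\ref{lema:coker}, which identifies $\coker\psi_{f,l}\cong h^l\S\oplus k^{\eta(f)}$, a computation with no analogue in a complex whose differentials are plain multiplications by $a$ and $a'$. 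If the singular strata really collapsed as you describe, you would essentially recover the answer of the non-root-of-unity case rather than the statement to be proved.

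Second, your identifications of the invariants are wrong in ways that change the dimensions. You describe $c=(a:a')$ as ``the quotient of $a$ by $\gcd(a,a')$ and hence the squarefree part of $a$,'' but in the paper $(a:a')$ denotes the greatest common divisor, so $c=\gcd(a,a')$ and $k[h]/(c)$ vanishes exactly when $a$ is squarefree, consistently with $\gldim A<\infty\iff(a:a')=1$; under your reading $HH_p(A)$ for $p\geq3$ would never vanish. Likewise you take $\N(f)=\prod_{i=0}^{e-1}\sigma^i(f)$, whereas the paper's $\N(f)$ is the \emph{least common multiple} of the translates $\sigma^i(f)$; the two differ, and hence so does $\eta(f)=\deg f-\tfrac1e\deg\N(f)$, precisely when $f$ shares irreducible factors with one of its translates --- the delicate situation you yourself flag as the main obstacle. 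Beyond these errors, all of the actual homology computations are deferred (``we expect to be able to adapt the arguments''), including the spectral sequence needed to pass from the double complex underlying the resolution to $HH_*(A)$ and $HH^*(A)$, so no stated dimension is actually established.
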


Whether the `quantum parameter' $q$ appearing in the definition of these
Bavula algebras is a root of unity or not is a fact that plays a fundamental
role, since the computations differ substantially in both cases.

\medskip

The article is organized as follows. In Section~\ref{notaciones} we fix the
notations and state some auxiliar results that will be necessary in the
rest of the article. In Section~\ref{gldim} we recall form \cite{Bav} the
definition of these algebras and we study their global dimension. In
Section~\ref{resolucion} we compute a projective resolution of our algebra
$A$ as an $A$-bimodule. In Section~\ref{homology} we compute the Hochschild
homology and, finally, in Section~\ref{cohomology} we compute the
Hochschild cohomology.

%%%%%%%%%%%%%%%%%%%%%%%%%%%%%%%%%%%%%%%%%%%%%%%%%%%%%%%%%%%%%
\section{Notations and some generalities}\label{notaciones}

Let $k$ be a field of characteristic zero. If $\lambda\in k$ and $n\geq0$,
we write  $\q{n}{\lambda} = 1 + \lambda  + \cdots + \lambda^{n-1}$;
in~particular, if $\lambda=1$, then $\q{n}{\lambda}=n$.

We fix a scalar $q\in k\setminus\{0,1\}$ and a monic polynomial
$a=\sum_{i=0}^N\alpha_ih^i\in k[h]$ of degree $\deg a=N>1$. Throughout the
paper, $A=A(a,q)$ will denote the $k$-algebra freely generated by letters
$y$, $h$ and $x$ subject to the relations
  \begin{align*}
  &xh=qhx,             
  && yx=a(h), 
  &&hy=qyh,             
  && xy=a(qh).
  \end{align*}
It is easy to see that the set $\{y^ih^j : i,j\geq 0\}\cup \{h^jx^k : j\geq
0,k\geq 1\}$ is a basis of $A$ as a $k$-module. 

We let $\sigma=\sigma_q:k[h]\to k[h]$ be the algebra automorphism such that
$\sigma(h)=qh$. Then $xr=\sigma(r)x$ and $ry=y\sigma(r)$ for all $r\in
k[h]$, and $xy=\sigma(a)$. Moreover, the algebra $A$ is $\ZZ$-graded in
such a way that the generators have degrees $|y|=1$, $|h|=0$ and $|x|=-1$;
we refer to the degree of an element homogeneous with respect to this
grading as its \emph{weight}.

We remark that there is an algebra isomorphism $\Phi:A(a,q)\to
A(\sigma_q(a),q^{-1})$ such that $\Phi(x)=y$, $\Phi(y)=x$ and $\Phi(h)=h$.
This isomorphism maps the homogeneous component of weight $r\in\ZZ$
of~$A(a,q)$ to the component of weight $-r$ of its codomain. This
observation will allow us to carry out homological computations just in
weights $r\ge 0$, since all arguments will be transferable to negative
degrees using $\Phi$.

\bigskip

Given polynomials $p$, $t \in k[h]$, we shall write  $(p:t)$ their greatest
common divisor and $p'$ the derivative of $p$ and we make the convention
that the degree of the zero polynomial is $-\infty$.

We let $c=(a:a')$ and $M=\deg(c)$.  If $q$ is a root of $1$, we let $e$ be
its order; if $q$ is not a root of unity we let $e=0$. If $r\in\ZZ$, we say
that $r$ is \emph{singular} if $e\mid r$, and that it is \emph{regular}
otherwise.

The subring of $k[h]$ fixed by $\sigma$ is $S=\ker(\sigma-1)$, generated by
$h^e$. We say that a polynomial $p\in k[h]$ is \textit{singular} if $p\in
\S$. More generally, when $e>0$ we have $\ker(\sigma-q^l)=h^l\,k[h^e]$ for
each $l\in\{0,\dots,e-1\}$.

If $e>0$, for each $f\in k[h]$ such that $f(0)\neq 0$ we define 
  \[
  \N(f)
        = \lcm\bigl(f:\sigma(f):\cdots:\sigma^{e-1}(f)\bigr)
  \qquad\text{and}\qquad
  \overline{f}
        =\frac{\N(f)}{f}.
  \]
Clearly $\sigma(\N(f))$ is a scalar multiple of $\N(f)$; evaluating both
at~$0$ shows then they are in fact equal, so that $\N(f)\in \S$. The reason
which motivates our interest in the operator $\N$ is the following
proposition:

\begin{Proposition}\label{prop:barra}
Let $f$,~$g\in k[h]$ and suppose $f(0)\neq0$.
\begin{enumerate}[label=\emph{(\roman*)}]

\item If $fg\in \S$, then $\overline{f}\mid g$.

\item If $g\in \S$ and $f\mid g$, then there exists $s\in\S$ such that
$g=\N(f)s$.

\end{enumerate}
\end{Proposition}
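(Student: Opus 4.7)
The plan is to deduce both parts from the same basic observation: the polynomials $f,\sigma(f),\dots,\sigma^{e-1}(f)$ all divide anything that $\sigma$ fixes and is a multiple of $f$, and therefore so does their least common multiple $\N(f)$.

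For part~(i), I would start from the equality $\sigma^i(fg)=fg$ valid for every $i\in\{0,\dots,e-1\}$, which holds because $fg\in\S$. Rewriting this as $\sigma^i(f)\cdot\sigma^i(g)=fg$ shows that $\sigma^i(f)\mid fg$ in $k[h]$ for every such $i$. Taking the least common multiple of these divisibility relations yields $\N(f)\mid fg$. Since $\N(f)=f\overline{f}$ and $f\neq0$ (because $f(0)\neq0$), cancelling $f$ in $k[h]$ gives $\overline{f}\mid g$, as desired.

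For part~(ii), the hypothesis $f\mid g$ together with $\sigma$-invariance of $g$ gives, for every $i$, that $\sigma^i(f)\mid\sigma^i(g)=g$; hence again $\N(f)\mid g$, and we may write $g=\N(f)s$ for a unique $s\in k[h]$. It remains to check that $s\in\S$. I would use that $\N(f)\in\S$ (already observed in the text just before the proposition) to compute
\[
\N(f)\,\sigma(s)=\sigma(\N(f))\,\sigma(s)=\sigma(g)=g=\N(f)\,s,
\]
and cancel the nonzero factor $\N(f)$ to conclude $\sigma(s)=s$, i.e.\ $s\in\S$.

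No serious obstacle is expected: both parts hinge on the elementary fact that in the principal ideal domain $k[h]$, divisibility by each of a finite collection of polynomials is equivalent to divisibility by their least common multiple. The only care required is to keep track of why the quotients remain in $k[h]$ (which follows from divisibility) and, for part~(ii), why the cofactor $s$ inherits $\sigma$-invariance from $g$ and $\N(f)$.
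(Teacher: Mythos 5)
Your proposal is correct and follows essentially the same route as the paper: part (i) from $\sigma^i(f)\mid fg$ for all $i$ and the defining property of the least common multiple $\N(f)$, with part (ii) reduced to the divisibility $\N(f)\mid g$ plus $\sigma$-invariance of the cofactor. The only difference is that the paper dismisses (ii) as an immediate consequence of (i), whereas you supply the (worthwhile, but routine) verification that the cofactor $s$ lies in $\S$ by cancelling $\N(f)$ against $\sigma(g)=g$.
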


\begin{proof}
Since $fg\in \S$, we know that $\sigma^i(fg)=fg$, so $\sigma^i(f)|fg$ for
all $i$. The first statement follows now from the definition of $\N(f)$.
The second one is an immediate consequence.
\end{proof}

We end this section with two technical lemmas which will be of use in the
computation of Sections~\ref{homology} and~\ref{cohomology}.

\begin{Lemma}
\label{lema:piS}
Let $f\in k[h]$ and suppose that $f(0)\neq0$ and that $q$ is a root of
unity of order~$e$. If $\pi:k[h]\to k[h]/(f)$ be the canonical projection,
then for each $l\geq0$ we have
  \[
  \dim\pi(h^lS) = \frac{\deg\N(f)}{e}.
  \]
\end{Lemma}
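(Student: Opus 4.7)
The plan is to reduce the computation to the case $l=0$ and then identify $\pi(S)$ with a quotient of $S$ by an ideal whose dimension is easy to compute using Proposition~\ref{prop:barra}.

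First I would observe that, since $f(0)\neq 0$, the polynomials $h$ and $f$ are coprime, so $\pi(h)$ is a unit in $k[h]/(f)$. Consequently multiplication by $\pi(h^l)$ is a $k$-linear automorphism of $k[h]/(f)$, and it sends $\pi(S)$ onto $\pi(h^l S)$. This immediately gives $\dim \pi(h^l S)=\dim\pi(S)$, so it suffices to treat $l=0$.

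Next, by the second isomorphism theorem $\pi(S)\cong S/(S\cap (f))$, so the task reduces to identifying $S\cap(f)$. This is precisely where Proposition~\ref{prop:barra}(ii) enters: an element $g\in S$ lies in $(f)$ iff $f\mid g$, and the proposition then produces $s\in S$ with $g=\N(f)s$; conversely every element of $\N(f)\cdot S$ lies in $S$ (because $\N(f)\in S$) and in $(f)$ (because $f\mid \N(f)$). Hence $S\cap(f)=\N(f)\cdot S$, and therefore $\pi(S)\cong S/\N(f)\cdot S$.

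Finally, since $S=k[h^e]$ is a polynomial ring in the variable $h^e$ and $\N(f)\in S$, we can view $\N(f)$ as a polynomial in $h^e$; its degree in $h^e$ is $(\deg\N(f))/e$, which is an integer because $\N(f)\in S$ forces $e\mid\deg\N(f)$. The dimension of the quotient of a univariate polynomial ring by a nonzero principal ideal equals the degree of the generator, so
\[
\dim\pi(h^lS)=\dim S/\N(f)\cdot S=\frac{\deg\N(f)}{e},
\]
as desired. There is no real obstacle here; the only subtle point is the use of Proposition~\ref{prop:barra}(ii), which requires $f(0)\neq 0$ in order to guarantee that every singular multiple of $f$ is actually of the form $\N(f)s$ with $s\in S$.
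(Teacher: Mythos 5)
Your proof is correct, and its second half takes a genuinely different route from the paper's. Both arguments begin identically (multiplication by $\pi(h)$ is invertible because $f(0)\neq0$, reducing to $l=0$) and both hinge on Proposition~\ref{prop:barra}(ii) to identify the kernel of $\pi|_{\S}$: you use it to prove $\S\cap(f)=\N(f)\S$, while the paper uses it to prove that the map $\rho\colon\bigl(k[h]/(\N(f))\bigr)^G\to k[h]/(f)$ is injective. Where you diverge is in the final dimension count. The paper endows $k[h]$ with an action of the cyclic group $G$ of order $e$ acting by $\sigma$, identifies $\pi(\S)$ with the invariant subspace $\bigl(k[h]/(\N(f))\bigr)^G$ (using that $(\pi')^G$ remains surjective, which implicitly relies on characteristic zero), and counts invariant monomials under the diagonal action. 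You instead observe that $\S=k[h^e]$ is itself a univariate polynomial ring, so $\pi(\S)\cong\S/\N(f)\S$ has dimension equal to the degree of $\N(f)$ as a polynomial in $h^e$, namely $\deg\N(f)/e$. Your version is more elementary: it avoids the group action, the two commutative diagrams, and the exactness of taking $G$-invariants, at the cost of nothing — the divisibility $e\mid\deg\N(f)$ that you need is immediate from $\N(f)\in\S$, just as the paper needs it implicitly when counting invariant monomials.
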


\begin{proof}
Since multiplication by $\pi(h)$ on $k[h]/(f)$ is an isomorphism, it is
enough to prove this when $l=0$. Let us consider the following commutative
diagram, in which the morphisms are the obvious ones:
  \[
   \xymatrix{
    k[h] \ar[d]_{\pi}\ar@{->>}[r]^-{\pi'} 
        & k[h]/(\N(f)) \ar[dl]^{\rho} \\
    k[h]/(f) 
  }
  \]
Let $G$ be a cyclic group of order $e$ generated by an element $g\in G$. We
endow $k[h]$ with the action of $G$ such that $g$ acts as~$\sigma$. Since
$\N(f)$ is $G$-invariant there is an induced action on $k[h]/(\N(f))$. The
map $\pi'$ is surjective, so the restriction $(\pi')^G:\S \rightarrow
\bigl(k[h]/(\N(f))\bigr)^G$ is surjective too.

The situation is described by the following commutative diagram
  \[
  \xymatrix{
    \S \ar[d]_{\pi|_{\S}}\ar[r]^-{(\pi')^G} 
        & (k[h]/(\N(f)))^G \ar[dl]^{\rho} \\
    k[h]/(f)    
  }
  \]
If $s\in\S$ is such that $\pi|_{\S}(s)=0$, then there exists a $b\in k[h]$
such that $fb=s\in\S$ and it follows from the previous proposotion that
$b=\bar fs_1$ for some $s_1\in\S$: we see that $s=\N(f)s_1$ and
$(\pi')^G(s)=0$. As $(\pi')^G$ is surjective, this implies that the map
$\rho$ is injective and, as a consequnce, that $\dim\pi(S)=\dim
(k[h]/(\N(f)))^G$.  

Now, $k[h]/(\N(f))$ has $\{h^i:0\leq i<\deg\N(f)\}$ as a basis and the
action of $G$ is diagonal with respect to it. It is immediate, then, that
$\dim (k[h]/(\N(f)))^G=\tfrac1e\deg\N(f)$
\end{proof}

\begin{Lemma}\label{lema:coker}
Let $f\in k[h]$ such that $f(0)\neq0$, $q\in k$ a root of unity of
order~$e$, $l\geq0$ and consider the $\S$-linear map $\psi_{f,l}:p\in
k[h]\mapsto (\sigma-q^l)(fp)\in k[h]$. Then
  \[
  \coker \psi_{f,l} \cong h^l\S \oplus k^{\eta(f)}
  \]
with $\eta(f)=\deg f-\tfrac1e\deg\N(f)$.
\end{Lemma}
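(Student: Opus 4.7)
The plan is to factor $\psi_{f,l}$ as $k[h]\xrightarrow{m_f}k[h]\xrightarrow{\sigma-q^l}k[h]$, where $m_f$ denotes multiplication by $f$, and to exploit the transparent behaviour of $\sigma-q^l$ with respect to the $\S$-module decomposition $k[h]=\bigoplus_{j=0}^{e-1}h^j\S$ recalled above. On that decomposition $\sigma$ acts on $h^j\S$ by the scalar $q^j$, so $\sigma-q^l$ vanishes on $h^l\S$ and is an $\S$-linear automorphism of $h^j\S$ for $j\neq l$. Thus $\ker(\sigma-q^l)=h^l\S$, $\im(\sigma-q^l)=\bigoplus_{j\neq l}h^j\S$, and the projection $k[h]\twoheadrightarrow h^l\S$ induces an identification $\coker(\sigma-q^l)\cong h^l\S$.

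From the factorization I would then produce the short exact sequence of $\S$-modules
\begin{equation*}
0 \longrightarrow k[h]/\bigl((f)+h^l\S\bigr) \longrightarrow \coker\psi_{f,l} \longrightarrow h^l\S \longrightarrow 0,
\end{equation*}
in which the second arrow is the canonical surjection onto $\coker(\sigma-q^l)$ and the first is the map induced by $\sigma-q^l$ (well defined because $h^l\S\subseteq\ker(\sigma-q^l)$ and $\im m_f=(f)$). Injectivity of the first arrow amounts to the observation that if $(\sigma-q^l)(p)\in(\sigma-q^l)\bigl((f)\bigr)=\im\psi_{f,l}$, then $p$ differs from an element of $(f)$ by something in $\ker(\sigma-q^l)=h^l\S$.

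This sequence splits as $\S$-modules via the $\S$-linear section $h^l\S\hookrightarrow k[h]\twoheadrightarrow\coker\psi_{f,l}$, since any element of $h^l\S$ projects to itself under the identification $\coker(\sigma-q^l)\cong h^l\S$. The conclusion then reduces to verifying that $k[h]/\bigl((f)+h^l\S\bigr)=\bigl(k[h]/(f)\bigr)/\pi(h^l\S)$ has $k$-dimension exactly $\eta(f)$: since $\dim k[h]/(f)=\deg f$ and Lemma~\ref{lema:piS} gives $\dim\pi(h^l\S)=\tfrac{1}{e}\deg\N(f)$, subtraction yields the desired value.

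The main obstacle is the correct formulation of the short exact sequence above and the verification that the natural inclusion $h^l\S\hookrightarrow k[h]$ descends to a genuine $\S$-linear section of the quotient; with that in hand, Lemma~\ref{lema:piS} takes care of the remaining dimension count.
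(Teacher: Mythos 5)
Your proof is correct and follows essentially the same route as the paper's: factor $\psi_{f,l}$ through multiplication by $f$ and $\sigma-q^l$, use the eigenspace decomposition $k[h]=\bigoplus_j h^j\S$ to split off the summand $h^l\S=\ker(\sigma-q^l)\cong\coker(\sigma-q^l)$, identify the complement with $k[h]/\bigl((f)+h^l\S\bigr)$, and invoke Lemma~\ref{lema:piS} for the dimension count. The only difference is presentational (a split short exact sequence versus the paper's direct sum decomposition $\coker\psi_{f,l}\cong h^l\S\oplus\coker(p\circ f)$), and your injectivity argument is the same one implicit in the paper's use of the induced injective map on $k[h]/h^l\S$.
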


\begin{proof}
We decompose $k[h]\cong \S \oplus h\S \cdots \oplus h^{e-1}\S$ as
$\S$-module. Since $\ker(\sigma-q^l)=h^l\S$, the map $\sigma-q^l$ induces
an injective map $k[h]/h^l\S\rightarrow k[h]$, still denoted $\sigma-q^l$.
Consider the following diagram
  \[
  \xymatrix{
     k[h] \ar[r]^-f 
        & k[h] \ar[r]^{\sigma-q^l} \ar[d]_-p
        & k[h] \\ 
        & k[h]/h^l\S \ar[ur]_-{\sigma-q^l}
  }
  \]
Because $\sigma-q^l$ is injective, it is immediate that $\coker
\psi_{f,l}\cong h^l\S\oplus\coker(p\circ f)$ and, since
  \[
  \coker p\circ f 
        \cong  \frac{k[h]}{h^l\S+(f)}
        \cong  \frac{k[h]}{(f)} \Bigm/  \pi(h^lS)
  \]
with $\pi$ the map defined in Lemma~\ref{lema:piS}, we see that $\dim\coker
p\circ f=\eta(f)$.
\end{proof}

We remark that the isomorphism in the statement of this lemma is actually
an isomorphism of $\S$-modules, if we identify the summand $k^{\eta(f)}$
with the quotient ${k[h]}/(h^l\S+(f))$ appearing in the proof.

%%%%%%%%%%%%%%%%%%%%%%%%%%%%%%%%%%%%%%%%%%%%%%%%%%%%%%%%%%%%%
\section{Global dimension}\label{gldim}

Given a noetherian algebra $R$ which is an integral domain, a non zero
central element $a \in R$ and an algebra automorphism $\sigma \in \Aut_k(R)$,
the \emph{Bavula algebra} $\Lambda=\Lambda(R, \sigma, a)$ is the $k$-algebra
generated by $R$ and two variables $x$, $y$ subject to the relations
 \begin{align*}
  &yx=a, 
  &&xy=\sigma(a), 
  &&xr=\sigma(r)x, 
  &&ry=y\sigma(r)
  \end{align*}
for all $r\in R$. It was introduced by V.~Bavula in~\cite{Bav} with the
name of \emph{generalized Weyl algebra}. The algebra $A$
introduced in Section~\ref{notaciones} is a special case of this
construction.

The algebra~$A$ is a noetherian domain and there is a $\ZZ$-grading
on~$\Lambda$ with all elements of~$R$ in degree~$0$, and $x$ and $y$ in
degrees $-1$ and~$1$, respectively; we denote $|u|$ the degree of an
homogeneous element $u\in\Lambda$ and call it its \emph{weight}.

\bigskip

Using the easily obtained description of automorphisms of $k[h]$, one can
obtain the following classification of the algebras of the form
$\Lambda(k[h],\sigma,a)$ up to isomorphism, as in \cite{RS}:

\begin{Proposition} \label{prop1}
The algebra $\Lambda=\Lambda(k[h],\sigma, a)$ is isomorphic to exactly one of the
following list:
\begin{enumerate}

\item $\Lambda(k[h],\Id,a)$ for some $a\in k[h]$;

\item $\Lambda(k[h],\sigma_{cl},a)$  with $\sigma_{cl}(h)=h-1$ and  $a\in k[h]$;

\item $\Lambda(k[h],\sigma_{q},a)$  with $q\in k\setminus\{0,1\}$, $\sigma_q(h)=qh$ and $a\in
k[h]$.

\end{enumerate}
We refer to case 2 as the \emph{classical case} and to case 3 as the
\emph{quantum case}.
\end{Proposition}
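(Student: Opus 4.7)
The plan is to use the explicit description of $\Aut_k(k[h])$ together with a change-of-variables reduction. First I would establish a transport lemma: for every $\phi \in \Aut_k(k[h])$, the assignments $h \mapsto \phi(h)$, $x \mapsto x$, $y \mapsto y$ extend to a $k$-algebra isomorphism
\[
\Lambda(k[h], \sigma, a) \longrightarrow \Lambda(k[h], \phi\sigma\phi^{-1}, \phi(a)).
\]
This is a routine check of the four defining relations; for instance, $xh = \sigma(h)x$ is sent to $x\phi(h) = \phi(\sigma(h))x$, which holds in the target because the commutation rule there gives $x\phi(h) = (\phi\sigma\phi^{-1})(\phi(h))x = \phi(\sigma(h))x$, and the relations involving $y$ and the products $yx$, $xy$ are similar.

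Next I would recall that any $k$-algebra automorphism of $k[h]$ is determined by the image of $h$, which must be a polynomial of degree one; hence $\sigma(h) = \alpha h + \beta$ for some $\alpha \in k^\times$ and $\beta \in k$. The argument then splits into three cases according to this linear part. If $\alpha = 1$ and $\beta = 0$, then $\sigma = \Id$ and we are already in case 1. If $\alpha = 1$ and $\beta \neq 0$, conjugating by $\phi(h) = -\beta h$ gives $\phi\sigma\phi^{-1}(h) = h - 1$, placing us in case 2. If $\alpha \neq 1$, then $h_0 = \beta/(1-\alpha) \in k$ is the unique fixed scalar of $\sigma$, and conjugating by the shift $\phi(h) = h + h_0$ gives $\phi\sigma\phi^{-1}(h) = \alpha h$, placing us in case 3 with $q = \alpha$. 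Both computations are a half-line of algebra. In each case the polynomial $a$ is simultaneously transported to $\phi(a) \in k[h]$, which keeps us within the list of standard forms.

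For the \emph{exactly one} clause one must argue that algebras in different families, and with different values of $q$ in case 3 (up to the equivalence $q \leftrightarrow q^{-1}$ induced by the flip $\Phi$ described in Section~\ref{notaciones}), are pairwise non-isomorphic. This is the main obstacle: it amounts to showing that any isomorphism $\Lambda(k[h], \sigma_1, a_1) \to \Lambda(k[h], \sigma_2, a_2)$ carries the base polynomial subring onto itself, so that $\sigma_1$ and $\sigma_2$ become conjugate in $\Aut_k(k[h])$. The cleanest route is to characterize $k[h]$ intrinsically inside $\Lambda$ using the $\ZZ$-grading, the fact that $\Lambda$ is a noetherian domain, and the analysis of normal/central elements in each weight; since the authors already attribute the result to \cite{RS}, I would invoke that reference for the uniqueness clause and concentrate the new content on the reduction to standard form described above.
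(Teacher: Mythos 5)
Your proposal is correct and follows exactly the route the paper itself indicates: the paper gives no proof of Proposition~\ref{prop1}, merely remarking that it follows from the ``easily obtained description of automorphisms of $k[h]$'' and citing \cite{RS}, and your transport lemma plus the affine normal-form computation for $\sigma(h)=\alpha h+\beta$ is precisely that reduction, with the ``exactly one'' clause delegated to \cite{RS} just as the authors do. The conjugation identities you use (in particular $\phi\sigma\phi^{-1}(h)=\alpha h$ after shifting by the fixed point $h_0=\beta/(1-\alpha)$) check out.
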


\bigskip

If $b\in R$, let $I(x,b)=\Lambda x+\Lambda b\subseteq \Lambda$. Bavula proved in \cite{Bav2}
the following result concerning the global dimension of his algebras:

\begin{Theorem}\cite[Thm. 3.5]{Bav2}
\label{posta}
If $R$ is a commutative Noetherian domain of finite global dimension $n$
and $a\neq 0$, then the following two conditions are equivalent:
\begin{itemize}

\item $\gldim\Lambda<\infty$ 

\item $\pdim_\Lambda \Lambda/I(x,\P)<\infty$ for all prime ideals $\P$ of
$R$ which contain $a$. \qed

\end{itemize}
\end{Theorem}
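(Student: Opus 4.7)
The forward direction is immediate: if $\gldim \Lambda < \infty$ then every $\Lambda$-module, in particular each $\Lambda/I(x,\P)$, has finite projective dimension.

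For the converse, my plan is to start by localizing at~$a$. The element $a$ is central in~$R$, and the relations $xa = \sigma(a)x$ and $ay = y\sigma(a)$ show that the multiplicative set generated by $a$ and its $\sigma$-iterates is a left and right Ore set in~$\Lambda$. In the resulting Ore localization $\widetilde{\Lambda}$ both $x$ and $y$ become invertible, because $yx = a$, so $\widetilde{\Lambda}$ is isomorphic to a skew Laurent polynomial ring $\widetilde R[x,x^{-1};\sigma]$ with $\widetilde R$ a localization of~$R$. Such a ring has finite global dimension, bounded by $\gldim \widetilde R + 1 \leq n+1$. Hence the ``large'' part of~$\Lambda$ is unproblematic, and the only obstruction to $\gldim \Lambda < \infty$ should come from the $a$-torsion modules.

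Next I would use that $\Lambda$ is $\ZZ$-graded and noetherian to reduce the problem to bounding $\pdim_\Lambda M$ uniformly for finitely generated graded modules~$M$. The short exact sequence relating $M$ to $M \otimes_\Lambda \widetilde\Lambda$ and the $a$-torsion submodule of~$M$, combined with the bound on $\gldim\widetilde\Lambda$ above, reduces the problem to bounding $\pdim_\Lambda N$ for $N$ a finitely generated graded $a$-torsion $\Lambda$-module. For such~$N$, I would argue by induction on the length of a filtration whose subquotients are killed by a single prime $\P$ of~$R$ containing~$a$; such a filtration exists because the annihilators of the weight components of~$N$ are intersections of finitely many primes containing~$a$, the finiteness coming from the noetherian hypothesis. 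Each simple subquotient would then be filtered further by the action of~$x$ on its weight components so as to exhibit it as an iterated extension of shifted copies of~$\Lambda/I(x,\P)$, and additivity of projective dimension along extensions together with the standing hypothesis would yield a uniform bound on $\pdim_\Lambda N$, and hence on~$\gldim \Lambda$.

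The main obstacle, as anticipated, will be this last filtration step: one must simultaneously control the $R$-action, the actions of $x$ and~$y$, and the $\ZZ$-grading of~$N$, and then patiently verify that the resulting subquotients really coincide with cyclic modules of the form $\Lambda/I(x,\P)$ up to degree shifts. Everything else is fairly standard: the Ore localization is formal, the reduction to the $a$-torsion part uses only the change-of-rings long exact sequence, and the bound on skew Laurent polynomial rings is a classical result of McConnell and Robson.
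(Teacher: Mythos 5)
You should first be aware that the paper does not prove this statement at all: it is quoted from \cite{Bav2} (Bavula's Theorem~3.5) and used as a black box --- the statement carries an end-of-proof mark and no argument. So there is no proof in the paper to compare against, and I can only judge your sketch on its own terms.

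Your overall strategy --- localize at the $\sigma$-orbit of $a$ to obtain a skew Laurent extension of finite global dimension, then handle the $a$-torsion modules by d\'evissage down to the modules $\Lambda/I(x,\P)$ --- is reasonable and is in the general spirit of how such theorems are proved, but as written it has genuine gaps at exactly the two places where the content lies. First, the change-of-rings step: knowing $\gldim\widetilde\Lambda<\infty$ does not directly bound $\pdim_\Lambda$ of a torsion-free module $M$. One must pass from $M$ to $M\otimes_\Lambda\widetilde\Lambda$, control $\pdim_\Lambda\widetilde\Lambda$ itself (flatness alone is not enough; one typically uses that $\widetilde\Lambda$ is a countable directed union of free $\Lambda$-modules to get $\pdim_\Lambda\widetilde\Lambda\le 1$), and then deal with the cokernel of $M\to M\otimes_\Lambda\widetilde\Lambda$, which is torsion but no longer finitely generated --- so the reduction to finitely generated torsion modules is circular unless supplemented by a direct-limit argument. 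None of this is ``only the change-of-rings long exact sequence.'' Second, and more seriously, the d\'evissage you defer is not a routine verification: since $x$ shifts the weight and twists the $R$-action by $\sigma$, the weight components of a graded module killed by a prime $\P\supseteq(a)$ are annihilated by \emph{different} primes in the $\sigma$-orbit of $\P$, and modules of ``lowest-weight'' type, i.e.\ of the form $\Lambda/I(y,\Q)$, occur unavoidably in any such filtration --- yet the hypothesis of the theorem says nothing about them. One needs a separate lemma identifying $\pdim_\Lambda\Lambda/I(y,\Q)$ with $\pdim_\Lambda\Lambda/I(x,\P')$ for a suitable prime $\P'$ (essentially via the symmetry exchanging $x$ with $y$, $\sigma$ with $\sigma^{-1}$ and $a$ with $\sigma(a)$, as exploited elsewhere in this paper through the isomorphism $\Phi$). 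Without that lemma and without a precise structure theory for the graded torsion modules, the final step of your plan does not go through.
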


When $R=k[h]$, the hypotheses of this theorem are satisfied and we can give
a characterization of Bavula algebras of finite global dimension. 

\begin{Theorem}
Let $R=k[h]$,  $a \in R$, $\sigma\in\Aut_k(R)$ and $\Lambda=\Lambda(R,
\sigma, a)$. Then
  \[ 
  \gldim\Lambda < \infty \iff (a:a')=1. 
  \]
\end{Theorem}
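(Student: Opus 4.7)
The approach is to apply Bavula's Theorem~\ref{posta} and reduce to a prime-by-prime check. Since $R=k[h]$ is a PID, the prime ideals of $R$ containing $a$ are exactly the $(p)$ for $p$ a monic irreducible divisor of $a$; and since $k$ has characteristic zero, $(a:a')=1$ is equivalent to $a$ being squarefree, i.e.\ to every such $p$ appearing in $a$ with multiplicity exactly one. Thus it suffices to show that $M_p := \Lambda/I(x,p)$ has finite projective dimension for every such $p$ when $a$ is squarefree, and that some $M_p$ has infinite projective dimension when $a$ has a repeated irreducible factor.

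For the forward direction, fix an irreducible $p\mid a$ and write $a=pb$ with $\gcd(p,b)=1$. The sequence $0\to \Lambda\xrightarrow{\cdot x}\Lambda\to\Lambda/\Lambda x\to 0$ is a free resolution (because $\Lambda$ is a domain), so $\pdim_\Lambda \Lambda/\Lambda x \le 1$. The key claim is the internal decomposition
\[
\Lambda/\Lambda x \;=\; \Lambda\bar p \,\oplus\, \Lambda\bar b,
\]
together with the module isomorphisms $\Lambda\bar p\cong\Lambda/(\Lambda x+\Lambda b)=M_b$ and $\Lambda\bar b\cong\Lambda/(\Lambda x+\Lambda p)=M_p$. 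The annihilator equality $\mathrm{ann}_\Lambda(\bar p) = \Lambda x + \Lambda b$ (and its symmetric counterpart) follows from a weight-by-weight computation using the PBW basis $\Lambda=\bigoplus_{i\ge 0}Ry^i\oplus\bigoplus_{k\ge 1}Rx^k$ together with the relation $yx=a=pb$; the sum equals $\Lambda/\Lambda x$ because B\'ezout gives $1=\alpha p+\beta b$ in $R$; and the intersection is zero by another weight-by-weight argument relying on $(p,b)=R$. Thus $M_p$ is a direct summand of $\Lambda/\Lambda x$, so $\pdim_\Lambda M_p\le 1$, and Bavula's theorem yields $\gldim\Lambda<\infty$.

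For the converse, suppose some irreducible $p$ divides $a$ with multiplicity $m\ge 2$ and write $a=p^m b'$ with $\gcd(p,b')=1$. I would build the naive free resolution of $M_p$ beginning with the kernel generators $(y,-p^{m-1}b')$ and $(\sigma(p),-x)$ for $\ker d_1$ (justified, as in the forward direction, by a PBW weight analysis), verify that the resulting complex becomes periodic of period two starting at $d_2$, and detect that this time the periodicity survives on cohomology. Since $p$ acts as zero on $M_p\cong k[\bar y]$, so does $p^{m-1}b'$ when $m\ge 2$, which destroys the cancellation that made the complex non-minimal in the squarefree case; applying $\mathrm{Hom}_\Lambda(-,M_p)$ then gives $\mathrm{Ext}^i_\Lambda(M_p,M_p)\cong M_p/yM_p \cong k \neq 0$ for every $i\ge 2$. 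Hence $\pdim_\Lambda M_p=\infty$ and Bavula's theorem gives $\gldim\Lambda=\infty$.

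The principal technical obstacle is the weight-by-weight PBW analysis used to verify both the annihilator equalities and the directness of the sum in the forward direction, where careful bookkeeping of the twisting automorphism $\sigma$ is required; the converse Ext computation parallels this but is lighter, needing only to certify nonvanishing.
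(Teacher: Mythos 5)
Your forward implication is correct and is essentially the paper's argument seen from the other side: you decompose the cyclic module $\Lambda/\Lambda x$ as $M_p\oplus M_b$, so that $M_p$ inherits $\pdim_\Lambda M_p\le 1$ from the free resolution $0\to\Lambda\xrightarrow{\cdot x}\Lambda\to\Lambda/\Lambda x\to 0$, while the paper splits the presentation $0\to I(x,b)\to\Lambda\oplus\Lambda\to I(x,p)\to 0$ using the same B\'ezout identity $1=sp+tb$ and concludes that the ideal $I(x,p)$ is projective. The two routes carry the same information, and both ultimately rest on the weight-by-weight identity $\Lambda x\cap\Lambda p=I(x,b)p$, which the paper proves by exactly the PBW bookkeeping you defer; your CRT-style decomposition of $\Lambda/\Lambda x\cong\bigoplus_{j\ge0}\bigl(R/\sigma^{-j}(a)\bigr)y^j$ is arguably slightly more transparent. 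The genuine divergence is in the converse: the paper simply cites Bavula for the ``only if'' direction, whereas you propose a self-contained proof via the eventually $2$-periodic free resolution of $M_p$. Your claimed syzygy generators $(\sigma(p),-x)$, $(y,-b)$ and the period-two pattern are correct (the higher syzygies alternate between copies of $I(x,p)$ and $I(x,b)$), so the plan is viable and buys independence from \cite{Bav2}.

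However, the justification you give for the $\operatorname{Ext}$ nonvanishing is inaccurate as stated. It is not true in general that $p$ acts as zero on $M_p$: one has $M_p\cong\bigoplus_{j\ge0}\bigl(R/\sigma^{-j}(p)\bigr)y^j$, and left multiplication by $p$ on the weight-$j$ piece is multiplication by the image of $p$ in $R/\sigma^{-j}(p)$, which is nonzero unless $\sigma^{-j}(p)$ is an associate of $p$ (try $p=h-1$ with $\sigma=\sigma_q$); likewise $M_p/yM_p\cong R/(p)$, which is $k$ only when $p$ is linear. So the complex $\mathrm{Hom}_\Lambda(\place,M_p)$ does not degenerate in the way your heuristic suggests. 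The conclusion is nevertheless salvageable by a weight argument: the cocycle $(0,\bar 1)$ lies in the kernel of the next dual differential, but writing it as a coboundary would require solving $-bm\equiv 1\pmod{p}$ in the weight-zero component $R/(p)$ of $M_p$, which is impossible precisely because $p\mid b$ when $p^2\mid a$. This gives $\operatorname{Ext}^2_\Lambda(M_p,M_p)\neq0$, and $2$-periodicity then yields $\operatorname{Ext}^{2i}_\Lambda(M_p,M_p)\neq0$ for all $i\ge1$, hence $\pdim_\Lambda M_p=\infty$ and, by Theorem~\ref{posta}, $\gldim\Lambda=\infty$. With that repair your argument is complete.
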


\begin{proof}
The ``only if'' part has been proved by Bavula in \cite{Bav2}, so we only
have to prove the ``if'' part.

Let $p \in R$ be a prime element which divides $a$, so that there is a
$b\in R$ with $a=pb$. The canonical short exact sequence of left
$\Lambda$-modules
  \[ 
  0 \to I(x,p) \to \Lambda \to \Lambda/I(x, p) \to 0 \label{eq:split} 
  \]
tells us that $\pdim_\Lambda\Lambda/I(x,p)< \pdim_\Lambda I(x,p) + 2$.  We
shall prove that if $(a:a')=1$, then $I(x,p)$ is a projective
$\Lambda$-module.

We start by showting that $\Lambda x\cap \Lambda p= I(x,b)p$.  Fix $f\in \Lambda
x\cap \Lambda p$; we may assume that $f$ is homogeneous with respect to the
weight and that $|f|=r\geq0$: the case in which the weight of $f$ is
negative is similar. Since $f\in \Lambda x\cap \Lambda p$, there exist $u$,
$v\in R$ such that $f=y^{r+1}ux=y^rvp$. As
\[
y^{r+1}ux=y^ra\sigma^{-1}(u)= y^rpb\sigma^{-1}(u)= y^r\sigma^{-1}(u)bp
\]
and $\Lambda$ is a domain,  $\sigma^{-1}(u)b=v$ and, in consequence, $f\in
I(x,b)p$.  The other inclusion is easy.

Consider now the short sequence of left $\Lambda$-modules 
\begin{equation}\label{eq:split} 
\xymatrix{
0  \ar[r]
  &   I(x,b)  \ar[r]^-{\gamma}  
  & \Lambda\oplus \Lambda \ar[r]^-{\phi}  
  & I(x, p)  \ar[r]
  & 0
}
\end{equation} 
where $\phi(\alpha, \beta)= \alpha x - \beta p$ and $\gamma(w)=(wpx^{-1}, w)$; this 
last expression makes sense because for all $p\in I(x,b)$ we have $wp\in Ax=xA$ and 
$A$ is a domain.

It is clear that $\gamma$ is a monomorphism, $\phi$ is an epimorphism and
that $\im\gamma\subseteq \ker \phi$. The sequence~\eqref{eq:split} is in
fact exact: to check the other inclusion suppose that $(\alpha, \beta)\in
\Lambda\oplus \Lambda$ is such that $\alpha x=\beta p$. This element
belongs to $\Lambda x\cap \Lambda p=I(x,b)p$, and it follows that $\alpha=
\beta p x^{-1}$. If $(a:a')=1$, then $(p:b)=1$ and there exist $s$, $t \in
R$ such that $1=sp+tb$. We define the map $\psi: \Lambda\oplus \Lambda \to
\Lambda$ by $\psi(\alpha, \beta)=\alpha xs + \beta bt$. It is easy to
verify that $\im \psi \subseteq I(x,b)$ and that $\psi\circ
\gamma=\Id_{I(x,b)}$. As a consequence, the sequence \eqref{eq:split}
splits and $I(x,p)$ is a projective $\Lambda$-module. \end{proof}

In particular, for the algebras introduced in Section~\ref{notaciones} we
have the following:

\begin{Corollary}
For all $q\in k\setminus\{0,1\}$ and all $a\in k[h]$ we have
  \[
  \gldim\Lambda(k[h],\sigma_q,a)<\infty
        \implies
        \gldim\Lambda(k[h],\sigma_q,a)=2.
  \]
\end{Corollary}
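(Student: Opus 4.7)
The plan is to prove the two inequalities $\gldim \Lambda \leq 2$ and $\gldim \Lambda \geq 2$ separately, using the hypothesis $\gldim \Lambda < \infty$ (which by the preceding theorem forces $(a:a') = 1$) as the standing assumption.

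For the upper bound $\gldim \Lambda \leq 2$ I would invoke the stronger information already contained in the proof of the preceding theorem: for every prime $p \in k[h]$ dividing $a$, that proof exhibits $I(x,p)$ as a projective left $\Lambda$-module via an explicit split short exact sequence, and therefore $\pdim_\Lambda \Lambda/I(x,p) \leq 1$ for each such $p$. Combined with $\gldim k[h] = 1$, the detailed analysis of Bavula in \cite{Bav2} underlying Theorem~\ref{posta} then yields the bound $\gldim \Lambda \leq \gldim k[h] + 1 = 2$.

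For the lower bound $\gldim \Lambda \geq 2$ I would argue that $\Lambda$ is not hereditary by producing a $\Lambda$-module $M$ of projective dimension exactly~$2$. A natural candidate is a one-dimensional simple module attached to a root $\alpha$ of $a$: when $a(0)=0$ the trivial module $k$, on which $h$, $x$, $y$ all act as $0$, works directly; in general one constructs $M$ from an irreducible factor of $a$ together with a compatible action of $x$ and $y$, and then verifies that the second syzygy of $M$ is non-projective. The hard part will be precisely this verification: avoiding a forward reference to the bimodule resolution constructed in Section~\ref{resolucion} forces a direct syzygy computation in $\Lambda$ that has to respect the interplay between the polynomial action of $h$ and the twist $\sigma_q$, which is delicate in the quantum case.
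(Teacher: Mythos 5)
The upper bound $\gldim\Lambda\leq 2$ in your proposal is unobjectionable but also essentially free: Bavula's Theorem 2.7, which the paper cites, already says that a finite global dimension of $\Lambda(R,\sigma,a)$ equals $\gldim R$ or $\gldim R+1$, hence lies in $\{1,2\}$ here, with no need to revisit the projectivity of $I(x,p)$. The entire content of the corollary is therefore the lower bound $\gldim\Lambda\geq 2$, and this is exactly the step you do not carry out: you name a candidate module and then concede that ``the hard part will be precisely this verification''. As written, there is no proof that $\gldim\Lambda\neq 1$.

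Moreover, the candidate you propose points in the wrong direction. A one-dimensional module on which $h$ acts by a scalar $\alpha$ satisfies, because of $xh=qhx$ and $hy=qyh$ with $q\neq1$, that $x$ and $y$ must act by $0$ whenever $\alpha\neq0$; the relations $yx=a(h)$ and $xy=a(qh)$ then force \emph{both} $\alpha$ and $q\alpha$ to be roots of $a$, which fails for a generic $a$ with $(a:a')=1$, while the trivial module exists only when $a(0)=0$. In addition, the modules $\Lambda/I(x,p)$ with $p$ a prime factor of $a$ are precisely those shown in the preceding theorem to have $\pdim\leq1$ when $(a:a')=1$, so they cannot witness projective dimension $2$. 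The true source of the lower bound in the quantum case has nothing to do with the roots of $a$: it is the maximal ideal $(h)$, which is fixed by $\sigma_q$ and hence has a finite orbit. The paper's proof consists of citing Bavula's Theorem 3.7 --- under finiteness of $\gldim$, one has $\gldim\Lambda=2$ if and only if some height-one maximal ideal of $k[h]$ has finite $\sigma$-orbit, or two such ideals containing $a$ lie in the same orbit --- and observing that $(h)$ always satisfies the first condition. If you insist on a self-contained argument, the module to analyse is one supported on $(h)$ (for instance $\Lambda/I(x,h)$, or the one-dimensional module with $h\mapsto0$, $y\mapsto1$, $x\mapsto a(0)$ when $a(0)\neq0$), not one attached to a root of $a$.
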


\begin{proof}
It follows from~\cite[Thm. 2.7]{Bav2} that if the global dimension
of~$\Lambda(R,\sigma,a)$ is finite, it equals either~$\gldim R$ or~$\gldim
R+1$. In the situation of the corollary, then,
$\gldim\Lambda(k[h],\sigma,a)\in\{1,2\}$ if it is finite. Moreover, using
\cite[Thm. 3.7]{Bav2}, we see that $\gldim\Lambda(k[h],\sigma,a)=2$ if and
only if either \emph{(i)} there is a maximal ideal of $k[h]$ of height $1$
with finite orbit under $\sigma$, or \emph{(ii)} if there  are maximal
ideals $\P$, $\Q$ of $k[h]$ of height $1$ such that $\sigma^i(\P)= \Q$ for
some $i\neq 0$, $i\in \ZZ$ and $a\in \P\cap\Q$.  Since the ideal $(h)$ of
$k[h]$ is obviously fixed by $\sigma$ and it is of height $1$, we are
always in case \emph{(i)}, and the corollary follows.
\end{proof}

The conditions \emph{(i)} and \emph{(ii)} mentioned in the proof of this corollary are not exclusive. Indeed, most of the complication encountered in the computations that follow arises when the algebra $A$ satisfies condition \emph{(ii)} or, in other words, when the polynomial $a$ has two roots in the same orbit under $\sigma_q$.

%%%%%%%%%%%%%%%%%%%%%%%%%%%%%%%%%%%%%%%%%%%%%%%%%%%%%%%%%%%%%%%%%%%%%
\section{A projective resolution}\label{resolucion}

The purpose of this section is to construct a projective resolution of the
Bavula algebra $A$. We do this in two steps, using an algebra $B_l$ as an
intermediate step, as in \cite{FSS}.

%%%%%%%%%%%%%%%%%%%%%%%%%%%%%%%%%%%%%%%%%%%%%%%%%
\subsection{Smith algebras}

Fix a polynomial $l=\sum_{i=0}^m \lambda_i H^i\in k[H]$, with $m>0$ and
$\lambda_m \neq 0$. We consider the $k$-algebra $B_l$, or simply $B$, with
generators $Y$, $H$ and $X$ subject to the relations
  \begin{align*}
  &HY=qYH, 
  &&[X,Y]=l, 
  &&XH=qHX.
  \end{align*}
This algebra was considered by P.\,Smith in~\cite{smith}, observing that it
is in many aspects similar to the enveloping algebra $\U(\mathfrak{sl}_2)$;
we will call it a \emph{Smith algebra}. 

The set $\{Y^iH^jX^k : i,j,k\geq 0\}$ is a basis of $B$ as a
$k$-module. Let $V=kY\oplus kH \oplus kX\subset B$.  Setting $|X|=|Y|=1$
and $|H|=0$ we obtain a grading on $TV$, which induces an increasing
filtration on $B$; let us write $\oY$, $\oH$ and $\oX$ for the principal
symbols of $Y$, $H$ and $X$, respectively, in $\oB=gr\ B$. Then $\oB$ is
the $k$-algebra generated by $\oY$, $\oH$ and $\oX$, subject to the
relations
  \begin{align*}
  &\oH\oY=q\oY\oH,  
  &&[\oX,\oY]=0,
  &&\oX\oH=q\oH\oX.
  \end{align*}
Of course, $\oV\cong gr\ V$ is spanned by $\oX$, $\oY$ and $\oH$, and these
elements are $k$-linearly independent.

We will use frequently the following notation: given a function $f$ of two
integer arguments, and $i \in \NN_0$, we will write
  \[
  \tint_i f(s,t) = \sum_{\substack{s+t+1=i\\ 0\leq s,t}} f(s,t).
  \]
In particular, in such an ''integral'' expression, the indices $s$ and $t$
are \textit{not} free. We note that the identity
  \[
  \tint_i f(s+1,t) - \tint_i f(s,t+1) = f(i,0) - f(0,i)
  \]
holds for all $f$ and $i$: we will make use of it repeatedly.

\bigskip

Consider now the complex of $B^e$-projective modules over $B$ 
  \begin{equation}
  \label{eq:smith_complex}
  \xymatrix{
  0\ar[r]
    & B|\bigwedge^3 V | B \ar[r]
    & B|\bigwedge^2 V | B \ar[r]^-d
    & B| V | B \ar[r]^-d
    & B | B \ar@{->>}[r]^-\mu
    & B 
  }
  \end{equation}
with differentials given by
  \begin{gather*}
  d(1|v|1)=1|v-v|1,\qquad \forall v \in V; \\
  d(1|\HX|1)=1|X|H-qH|X|1-q|H|X+X|H|1; \\
  d(1|\YX|1)=1|X|Y-Y|X|1-1|Y|X+X|Y|1-\tsum_i \tint_i \lambda_i H^s|H|H^t; \\
  d(1|\YH|1)=1|H|Y-qY|H|1-q|Y|H+H|Y|1; \\
  \!\begin{multlined}[t][0.8\displaywidth]
  d(1|\YHX|1)=1|\HX|Y -qY|\HX|1 -q|\YX|H + q H|\YX|1 \\
                                + q|\YH|X - X|\YH|1.
  \end{multlined}
  \end{gather*}
The verification that $d^2=0$ is a routine computation.

The filtrations on $B$ and on $V$ determine a filtration on the complex
\eqref{eq:smith_complex}, whose associated graded complex is
  \[
  \xymatrix@1{
  0\ar[r] 
        & \oB|\bigwedge^3 \oV | \oB\ar[r]^-d 
        & \oB|\bigwedge^2 \oV | \oB\ar[r]^-d 
        & \oB| \oV | \oB \ar[r]^-d 
        & \oB | \oB \ar@{->>}[r]^-{\mu} 
        & \oB
  }
  \]
with $\oB^e$-linear differentials determined by the conditions
  \begin{gather*}
  d(1|v|1)
        = 1|v-v|1,\qquad \forall v \in \oV; \\
  d(1|\oHX|1)
        = 1|\oX|\oH-q\oH|\oX|1-q|\oH|\oX+\oX|\oH|1; \\
  d(1|\oYX|1)
        = 1|\oX|\oY-\oY|\oX|1-1|\oY|\oX+\oX|\oY|1; \\
  d(1|\oYH|1)
        = 1|\oH|\oY-q\oY|\oH|1-q|\oY|\oH+\oH|\oY|1; \\
  \!\begin{multlined}[t][0.8\displaywidth]
  d(1|\oYHX|1)
        = 1|\oHX|\oY -q\oY|\oHX|1 -q|\oYX|\oH + q \oH|\oYX|1 \\
                                + q|\oYH|\oX - \oX|\oYH|1.
  \end{multlined}
  \end{gather*}
This complex is exact. Indeed, there is a left $\oB$-linear contraction given by
  \begin{gather*}
   s(1) 
        = 1|1;\\
   s(1|\oY^i\oH^j\oX^k) 
        = \tsum_i \tint_i \oY^s|\oY|\oY^t\oH^j\oX^k 
        + \tsum_i \tint_j \oY^i\oH^s|\oH|\oH^t\oX^k 
        + \tsum_i \tint_k \oY^i\oH^j\oX^s|\oX|\oX^t; \\
   s(1|\oY|\oY^i\oH^j\oX^k) 
        = 0; \\
   s(1|\oH|\oY^i\oH^j\oX^k) 
        = \tsum_i \tint_i q^s\oY^s|\oYH|\oY^t\oH^j\oX^k; \\
   s(1|\oX|\oY^i\oH^j\oX^k)
        = \tsum_i \tint_i \oY^s|\oYX|\oY^t\oH^j\oX^k 
        + \tsum_i \tint_j q^s \oY^i\oH^s|\oHX|\oH^t\oX^k; \\
   s(1|\oHX|\oY^i\oH^j\oX^k)
        = \tsum_i \tint_i q^s\oY^s|\oYHX|\oY^t\oH^j\oX^k; \\
   s(1|\oYX|\oY^i\oH^j\oX^k)
        = 0; \\
   s(1|\oYH|\oY^i\oH^j\oX^k)
        = 0.
  \end{gather*}
It follows that the complex \eqref{eq:smith_complex} is a $B^e$-projective
resolution of $B$.

%%%%%%%%%%%%%%%%%%%%%%%%%%%%%%5
\subsection{Bavula algebras}

Next we construct a resolution of our Bavula algebra as a bimodule over
itself. Let $l=\sigma(a) - a$; then $\deg a\ge \deg l$ and $l= \sum_{i=0}^N
\lambda_ih^i$ with $\lambda_i=(q^i-1)\alpha_i$. We consider the Smith
algebra $B=B_l$ corresponding to the polynomial $l$, and the element
$\Omega=YX-a \in B$. A simple computation shows that $\Omega=XY-\sigma(a)$
and that $\Omega$ is central in $B$. In particular, $B\Omega=\Omega B$ is a
two-sided ideal of $B$ and the quotient $B/\Omega B$ is isomorphic to $A$
via an isomorphism which sends the classes of $Y$, $H$ and $X$ to $y$, $h$
and $x$ respectively. We will identify $A$ with the quotient. 

Let $\pi: B \to A$ denote the canonical projection. Since $\Omega$ is not a
zero divisor in $B$, the complex
  \begin{equation}
  \label{eq:resolution1}
  0 \longrightarrow B \overset{\Omega}{\longrightarrow} B \overset{\pi}{\longrightarrow} A \longrightarrow 0
  \end{equation}
is a projective resolution of $A$ as a $B$-module both on the left and on
the right; here the first arrow is simply the multiplication by $\Omega$.
On the other hand, by applying the functor $(\place)\otimes_B A$ to the
resolution \eqref{eq:smith_complex} of $B$ as $B^e$-module given in the
previous subsection, we obtain the complex 
  \begin{equation}
  \label{eq:resolution2}
  \xymatrix{
  0 \ar[r]
        & B|\bigwedge^3 V | A \ar[r]^-d
        & B|\bigwedge^2 V | A \ar[r]^-d
        & B|V|A \ar[r]^-d
        & B|A \ar[r]^-\mu
        & A \ar[r]
        & 0
  }
  \end{equation}
with $B\otimes A^\op$-linear differentials given by
  \begin{gather*}
  d(1|v|1)
        = 1|v-v|1,\qquad \forall v \in V; \\
  d(1|\HX|1)
        = 1|X|h-qH|X|1-q|H|x+X|H|1; \\
  d(1|\YX|1)
        = 1|X|y-Y|X|1-1|Y|x+X|Y|1-\tsum_i \tint_i \lambda_i H^s|H|h^t; \\
  d(1|\YH|1)
        = 1|H|y-qY|H|1-q|Y|h + H|Y|1; \\
  d(1|\YHX|1)
        = 1|\HX|y -qY|\HX|1 -q|\YX|h + q H|\YX|1 + q|\YH|x - X|\YH|1.
  \end{gather*}
The homology of this complex is $\Tor^B_{\bullet}(B,A)$, so that it is in
fact acyclic. This means that \eqref{eq:resolution2} is a projective
resolution of $A$ as a left $B$-module.

\bigskip

There exist morphisms between the two resolutions \eqref{eq:resolution1}
and \eqref{eq:resolution2} of the left $B$-module $A$ lifting the identity map
of $A$:
  \begin{equation} \label{eq:comparacion}
  \xymatrix{
  0 \ar[r] 
        & B \ar[r]^{\Omega} \ar@<0.5ex>[d]^{f_1}
        & B \ar@{->>}[r]^{\pi} \ar[d]^{f_0}
        & A \ar[d]^{1_A} 
        \\
  \cdots \ar[r] 
        & B|V|A \ar[r]^d \ar@<0.5ex>[u]^{g_1}
        & B|A \ar@{->>}[r]^{\mu} \ar@<0.5ex>[u]^{g_0}
        & A \ar@<0.5ex>[u]^{1_A}
  }
  \end{equation}
given by
  \begin{align*}
   &f_0(1)=1|1; 
        && f_1(1)=-Y|X|1-1|Y|x+\tsum_i\tint_i \alpha_i H^s|H|h^t;\\
   &g_0(1|y^ih^j)=Y^iH^j; 
        && g_0(1|h^jx^k)=H^jX^k;\\
   &g_1(1|Y|y^ih^j)=0; 
        && g_1(1|Y|h^jx^{k+1})=-q^{-j}H^jX^k\\
   &g_1(1|H|y^ih^j)=0; 
        && g_1(1|H|h^jx^k)=0;\\
   &g_1(1|X|y^{i+1}h^j)=-Y^iH^j; 
        && g_1(1|X|h^jx^k)=0.
  \end{align*}

Using \eqref{eq:resolution1}, the computation of $\Tor^B_{\bullet}(A,A)$ is immediate because
the only relevant differential vanishes, and we see that
  \begin{equation}
   \label{eq:tor}
   \Tor^B_p(A,A)=\begin{cases}
		  A\otimes_B B, & p=0;\\
		  A\otimes_B B, & p=1;\\
		  0,   &p\geq 2.
		 \end{cases}
  \end{equation}
Since $\Tor^B_{\bullet}(A,A)$ can be calculated from any resolution of $A$
as left $B$-module, the complex obtained by applying the functor
$A\otimes_B(\place)$ to the resolution (\ref{eq:resolution2}), that is
  \begin{equation}
  \label{eq:complex_row}
  \xymatrix@1{
  0 \ar[r] 
        & A|\bigwedge^3 V | A\ar[r]^d 
        & A|\bigwedge^2 V | A\ar[r]^d 
        & A| V | A \ar[r]^d 
        & A | A
  }
  \end{equation}
with $A^e$-linear differential:
  \begin{gather*}
   d(1|v|1)=1|\pi(v)-\pi(v)|1,\qquad \forall v \in V;\\
   d(1|\HX|1)=1|X|h-qh|X|1-q|H|x+x|H|1; \\
   d(1|\YX|1)=1|X|y-y|X|1-1|Y|x+x|Y|1-\tsum_i \tint_i \lambda_i h^s|H|h^t; \\
   d(1|\YH|1)=1|H|y-qy|H|1-q|Y|h + h|Y|1; \\
   d(1|\YHX|1)=1|\HX|y -qy|\HX|1 -q|\YX|h + q h|\YX|1 + q|\YH|x - x|\YH|1,
  \end{gather*}
has homology isomorphic to $\Tor^B_{\bullet}(A,A)$. Using the morphisms
$f_{\bullet}$ and $g_{\bullet}$ from~\eqref{eq:comparacion}, we see that the homology
of the complex (\ref{eq:complex_row}) is freely generated as left
$A$-module by the classes of the cycles $1|1\in A\otimes A$ and
  \[
  y|X|1+1|Y|x-\tsum_i\tint_i \alpha_i h^s|H|h^t \in A\otimes V\otimes A,
  \]
of degrees $0$ and $1$, respectively.

%%%%%%%%%%%%%%%%%%%%%%%%%%%%%%%%%%%
\subsection{The resolution}
\label{resolution}

Next we consider the third-quadrant double complex $X_{\bullet,\bullet}$
depicted in the following diagram
  \[
  \xymatrix{
  {}
        &   
        & 0\ar[r] 
        & A|\bigwedge^3 V | A\ar[r]^-d 
        & A|\bigwedge^2 V | A\ar[r]^-d 
        & A| V | A \ar[r]^-d 
        & A | A 
        \\
  {}
        & 0\ar[r] 
        & A|\bigwedge^3 V | A\ar[r]^-d \ar[u] 
        & A|\bigwedge^2 V | A\ar[r]^-d\ar[u]_-{\delta}  
        & A| V | A \ar[r]^-d \ar[u]_-{\delta} 
        & A | A\ar[u]_-{\delta} 
        \\
  \iddots 
        & \iddots\ar[u] 
        & \iddots\ar[u]_-{\delta} 
        & \iddots\ar[u]_-{\delta} 
        & \iddots \ar[u]_-{\delta}
  }
  \]
so that $X_{p,q}=A|\bigwedge^{p-q}V|A$ if $q\geq 0$ and $X_{p,q}=0$
otherwise, with horizontal $A^e$-linear differentials $d$, of bidegree
$(-1,0)$, given as in~\eqref{eq:complex_row}, and vertical differentials
$\delta$, of bidegree $(0,1)$, given by
  \begin{gather*}
  \delta(1|1)
	= y|X|1+1|Y|x-\tsum_i \tint_i \alpha_i h^s|H|h^t;\\
  \delta(1|Y|1)
	= -y|\YX|1+\tsum_i \tint_i \alpha_i q^t h^s|\YH|h^t;\\
  \delta(1|H|1)
	= 1|\YH|x-y|\HX|1;\\
  \delta(1|X|1)
	= 1|\YX|x-\tsum_i \tint_i \alpha_i q^s h^s|\HX|h^t;\\
  \delta(1|\YH|1)
	= y|\YHX|1;\\
  \delta(1|\YX|1)
	= \tsum_i \tint_i \alpha_i q^{i-1}h^s|\YHX|h^t;\\
  \delta(1|\HX|1)
	= 1|\YHX|x.
  \end{gather*}
A direct computation shows that it is indeed a complex with anti-commuting differentials.

To compute the homology of the total complex $\Tot X_{\bullet,\bullet}$ we
use the spectral sequence $E$ which arises from the filtration by rows. The
differential on the first page $E^0$ of this spectral sequence is the
horizontal differential $d$ on $X_{\bullet,\bullet}$, and we have
essentially computed the corresponding homology in \eqref{eq:tor}: we see
from this that the second page $E^1$ of $E$ is, up to isomorphism, as in
the following diagram:
  \[
  \xymatrix@R-10pt{
  {} 
        & 
        & 
        & 0
        & 0
        & A 
        & A 
        \\
  {}
        & 
        & 0
        & 0
        & A 
        & A\ar[u]_-{d^1}
        \\
  {}
        & 0
        & 0
        & A 
        & A\ar[u]_-{d^1}
        \\
  \iddots
        & \iddots
        & \iddots
        & \iddots
  }
  \]
Consequently, the only components of the differential $d^1$ which can
possibly be non zero are the maps $d^1_{p,p}:E^1_{p,p}\rightarrow
E^1_{p,p-1}$, with $p\geq 1$, and they are induced by the vertical
differentials $\delta$ in~$X_{\bullet,\bullet}$. We know that $E^1_{p,p}$
and $E^1_{p,p-1}$ are free left $A$-modules on the horizontal homology
classes of $1|1\in X_{p,p}$ and $\omega =  y|X|1+1|Y|x-\sum_i\tint_i
\alpha_i h^s|H|h^t \in X_{p,p-1}$, respectively.  In view of the definition
of $\delta$, $d^1([1|1])=[\omega]$, and, since $d^1$ is $A$-linear, this
shows that all components of $d^1$ which are not trivially zero are
isomorphisms.

It follows that the complex $\Tot X_{\bullet,\bullet}$ is acyclic over $A$,
with augmentation given by the multiplication map $\mu:X_{0,0}=A\otimes
A\rightarrow A$ and, since its components are free $A^e$-modules, it is in
fact a projective resolution of $A$ as $A^e$-module.

We consider the grading $V$ such that~$Y$, $H$ and~$X$ are homogeneous of
degrees $1$, $0$ and~$-1$, respectively. This, together with the grading
of~$A$ by weights, induces a grading on the complex $X_{\bullet,\bullet}$
such the differentials are homogeneous. It follows that the complexes
obtained by applying the functors $A\otimes_{A^e}(\place)$ and
$\hom_{A^e}(\place,A)$ below will also be graded by weights in a natural
way.

%%%%%%%%%%%%%%%%%%%%%%%%%%%%%%%%%%%%%%%%%%%%%%%%%%%%%%%%%
\section{Hochschild homology}\label{homology}

In this section we will compute the Hochschild homology of $A$ using the
resolution described in the previous section and a spectral sequence argument.

Applying the functor $A\otimes_{A^e}-$ to $X_{\bullet,\bullet}$ and identifying
$A\otimes_{A^e}(A\otimes \wedge^p V \otimes A)$ with $A\otimes\wedge^p V$ in the
natural way, we get a double complex such that the homology of its total complex
is $HH_*(A)$, the Hochschild homology of $A$ with coefficients in itself.
This double complex is
  \[
  \xymatrix@R-5pt{
    &   
    & 0\ar[r] 
    & A|\bigwedge^3 V \ar[r]^d 
    & A|\bigwedge^2 V \ar[r]^d 
    & A| V \ar[r]^d 
    & A \\
    & 0\ar[r] 
    & A|\bigwedge^3 V \ar[r]^d \ar[u]_{\delta} 
    & A|\bigwedge^2 V \ar[r]^d\ar[u]_{\delta}  
    & A| V  \ar[r]^d \ar[u]_{\delta} 
    & A \ar[u]_{\delta}\\
  \iddots 
    & \iddots\ar[u]_{\delta} 
    &\iddots\ar[u]_{\delta}
    &\iddots\ar[u]_{\delta} 
    &\iddots \ar[u]_{\delta}
  }
  \]
with differentials given by
  \begin{subequations}
  \begin{gather}
  d(u|Y)=[y,u], \label{eq:d0-y} \\
  d(u|H)=[h,u],\label{eq:d0-h} \\
  d(u|X)=[x,u],\label{eq:d0-x} \\
  d(u|\YH)=[y,u]_q|H+[u,h]_q|Y, \label{eq:d1-yh} \\
  d(u|\YX)=[y,u]|X+[u,x]|Y-\tsum_i \lambda_i\tint_ih^t u h^s|H, \label{eq:d1-yx} \\
  d(u|\HX)=[h,u]_q|X+[u,x]_q|H, \label{eq:d1-hx}  \\
  d(u|\YHX)=[y,u]_q|\HX+q[u,h]|\YX-[u,x]_q|\YH, \label{eq:d2}
  \end{gather}
  \end{subequations}
and
  \begin{subequations}
  \begin{gather}
  \delta(u)=uy|X+xu|Y-\tsum_i\alpha_i\tint_ih^tuh^s|H, \label{eq:delta-pri} \\
  \delta(u|Y)=-uy|\YX+\tsum_i \alpha_i\tint_iq^th^tuh^s|\YH,  \\
  \delta(u|H)=xu|\YH-uy|\HX, \\
  \delta(u|X)=xu|\YX-\tsum_i \alpha_i\tint_iq^sh^tuh^s|\HX, \\
  \delta(u|\YH)=uy|\YHX,  \\
  \delta(u|\YX)=\tsum_i\alpha_i\tint_i q^{i-1}h^t u h^s|\YHX, \\
  \delta(u|\HX)=xu|\YHX. \label{eq:delta-ult}
  \end{gather}
  \end{subequations}
We will use the filtration by columns on this complex and denote $E$ the
corresponding spectral sequence, which, as the complex
$A\otimes_{A^e}X_{\bullet,\bullet}$ itself, is graded by weights. We are
going to write $HH_\bullet(A)^\wt r$ and $E^\wt r$ the components of weight
$r$ in $HH_\bullet(A)=H(A\otimes_{A^e}X_{\bullet,\bullet})$ and $E$.

%%%%%%%%%%%%%%%%%%%%%%%%%%%%%%%%%%%%%%%%%%%%%%%%%%%%%%%%5
\subsection{First Page}

Let $\X$ be the complex 
  \begin{equation}
  \label{colg}
  \xymatrix{
    0 \ar[r]
      & A \ar[r]^-{\delta} 
      & A| V \ar[r]^-{\delta} 
      & A|\bigwedge^2 V \ar[r]^-{\delta} 
      & A|\bigwedge^3 V \ar[r] 
      & 0
  }
  \end{equation}
graded so that $A$ and $A|\bigwedge^3V$ are in degrees $0$ and $3$,
respectively, and with differentials as in
\eqref{eq:delta-pri}--\eqref{eq:delta-ult}. It is clear that $E^1_{p,q} =
H_{p-q}(\X)$ for all $q>0$ and that the $E^1_{p,0}$ can be seen as
cokernels of the differentials of $\X$.

For each $r\in\ZZ$, let $\X^\wt r$ be the homogeneous component of weight
$r$. In this subsection, we compute
$H_\bullet(\X)=\bigoplus_{r\in\ZZ}H_\bullet(\X^\wt r)$.

\begin{Proposition}
If $r\in\ZZ$ is non zero, then the complex $\X^\wt r$ is exact. On the
other hand, there are isomorphisms of $S$-modules
  \[
  H_p(\X^\wt0) \cong
    \begin{cases}
    k[h]/(c), & \text{if $2\leq p\leq 3$;} \\
    0, & \text{otherwise.}
    \end{cases}
  \]
\end{Proposition}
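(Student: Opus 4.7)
The plan is to work weight by weight. The algebra isomorphism $\Phi$ from Section~\ref{notaciones} sends the weight-$r$ component of $A$ to the weight-$(-r)$ component of $A(\sigma_{q^{-1}},\sigma_q(a))$ and transports the analogous complex, so it suffices to treat $r\geq0$. For each such $r$ I identify $A^\wt s\cong k[h]$ via $y^sf\leftrightarrow f$ for $s\geq0$ and $fx^{|s|}\leftrightarrow f$ for $s<0$. Under these identifications $\X^\wt r$ becomes a four-term complex $k[h]\to k[h]^3\to k[h]^3\to k[h]$, and the formulas \eqref{eq:delta-pri}--\eqref{eq:delta-ult} translate into explicit matrices whose entries are built from multiplication by polynomials such as $\sigma^s(a)$ or the twisted divided difference $\partial^{(r)}(a):=\sum_i\alpha_i\q{i}{q^r}h^{i-1}$ (which equals $a'$ when $r=0$) together with powers of $\sigma$.

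For $r\geq1$ the formulas yield in particular
\[
\delta_0(f)=\bigl(\sigma^r(a)f,\;-\partial^{(r)}(a)f,\;\sigma(f)\bigr),\qquad
\delta_2(\mu,\nu,\tau)=\sigma(\mu)+\sigma(\partial^{(r)}(a))\nu+\sigma^{r+1}(a)\tau
\]
in the $(Y,H,X)$- and $(\YH,\YX,\HX)$-orderings. Because $\sigma$ is an automorphism, the $\sigma(f)$-slot in $\delta_0$ and the $\sigma(\mu)$-slot in $\delta_2$ already give $H_0=0$ and $H_3=0$. For the middle homologies I would show that every element of $\ker\delta_1$ has the form $\delta_0(\sigma^{-1}(p))$ by solving the $\YX$- and $\HX$-relations for $g,f$ in terms of $p$; and that every $(\mu,\nu,\tau)\in\ker\delta_2$ is hit by $\delta_1(-\sigma^{-1}(\nu),-\sigma^{-1}(\tau),0)$, with the $\mu$-component matching automatically because of the $\ker\delta_2$ relation.

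The case $r=0$ is where the nontrivial homology arises. The corresponding formulas are $\delta_0(f)=(\sigma(f),-a'f,\sigma(f))$, $\delta_1(g,f,p)=(\sigma(a')g+\sigma(f),\,\sigma(a)(p-g),\,-\sigma(f)-\sigma(a')p)$ and $\delta_2(\mu,\nu,\tau)=\sigma(a)(\mu+\tau)+\sigma(a')\nu$. Injectivity of $\sigma$ gives $H_0=0$, and a simple parametrization ($p=g$, $f=-a'\sigma^{-1}(g)$) identifies $\ker\delta_1$ with $\im\delta_0$, so $H_1=0$. The image of $\delta_2$ is the ideal $(\sigma(a),\sigma(a'))=(\sigma(c))$, so $H_3\cong k[h]/(\sigma(c))$; since $\sigma(h^e)=h^e$, the automorphism $\sigma$ is $\S$-linear and induces an $\S$-module isomorphism $k[h]/(\sigma(c))\cong k[h]/(c)$. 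For $H_2$, writing $\sigma(a)=\sigma(c)\alpha$ and $\sigma(a')=\sigma(c)\beta$ with $(\alpha,\beta)=1$, the $\ker\delta_2$-relation becomes $\alpha(\mu+\tau)+\beta\nu=0$, which gives $\ker\delta_2\cong k[h]^2$ in coordinates $(\mu,\nu_1)$ where $\nu=\alpha\nu_1$; in these coordinates $\im\delta_1$ is the submodule with $\mu$ free and $\nu_1\in(\sigma(c))$, so the quotient is $k[h]/(\sigma(c))\cong k[h]/(c)$.

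The main obstacle will be the bookkeeping: the identification $A^\wt s\cong k[h]$ depends on the sign of $s$, the left $k[h]$-action picks up $\sigma^s$-twists that must be propagated through all formulas, and each $\delta_i$ couples three different weights. Once the matrix descriptions are in place, however, the remaining arguments are elementary linear algebra over the PID $k[h]$.
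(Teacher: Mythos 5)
Your proposal is correct and follows essentially the same route as the paper: a weight-by-weight computation with the same explicit formulas for $\delta$, the reduction to $r\geq 0$ via $\Phi$, and the same normalizations of cycles and boundaries. The only (harmless) variation is your coordinate parametrization $(\mu,\nu_1)$ of $\ker\delta_2$ in weight zero, which packages the paper's reduction of $g$ modulo $\sigma(c)$ slightly more cleanly.
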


\begin{proof}
One way to organize the computation is as follows:
\begin{itemize}

%%%%%%%%%%%%%%%%%%%%%%%%%%%%%%%%%%%%%%%%%%%%%%%%%%%%%%
\item If $u=p\in \X^\wt0_0$, with $p\in k[h]$, then
  \begin{equation} \label{eq:delta0}
  \delta(u)=y\sigma(p)|X+\sigma(p)x|Y-a'p|H.
  \end{equation}
As $A$ is a domain, it follows immediately that $\delta$ is a monomorphism
and that $H_0(\X^\wt0)=0$.

%%%%%%%%%%%%%%%%%%%%%%%%%%%%%%%%%%%%%%%%%%%%%%%%%%%%%%
\item Let $u=p_1x|Y+p_2|H+yp_3|X \in \X^\wt0_1$, with $p_1$, $p_2$, $p_3\in
k[h]$. We know that
  \begin{equation} \label{eq:delta1}
  \delta(u) =
        (p_1\sigma(a')+\sigma(p_2))x|\YH 
        + \sigma(a)(p_3-p_1)|\YX 
        - y(p_3\sigma(a')+\sigma(p_2))|\HX.
  \end{equation}
Since $A$ is a domain, we see that $\delta(u)=0$ if and only if $p_1=p_3$
and $p_2=-\sigma^{-1}(p_1)a'$. This description of cyles together with the
expression \eqref{eq:delta0} of boundaries imply that $H_1(\X^\wt0)=0$.

%%%%%%%%%%%%%%%%%%%%%%%%%%%%%%%%%%%%%%%%%%%%%%%%%%%%%%
\item Let $u=p_1x|\YH+p_2|\YX+yp_3|\HX \in \X^\wt0_2$. A computation shows
that
  \begin{equation} \label{eq:delta2}
   \delta(u)=(p_1 \sigma(a) + p_2\sigma(a')+\sigma(a)p_3 )|\YHX.
  \end{equation}
Suppose that $u\in\ker\delta$, so $p_1 \sigma(a) +
p_2\sigma(a')+\sigma(a)p_3 =0$. It follows immediately from this that
$\sigma(\tfrac{a}{c})(p_1+p_3)=-\sigma(\tfrac{a'}{c})p_2$. Since $a/c$ and
$a'/c$ are coprime, there exists $g\in k[h]$ such that
$p_1+p_3=-\sigma(\tfrac{a'}{c})g$ and $p_2=\sigma(\tfrac{a}{c})g$. If $v$,
$r\in k[h]$ are such that $g=v\sigma(c)+r$ and $\deg r<\deg c$, then $u$ is
homologous to 
  \[
  u-\delta(\sigma^{-1}(p_1)|H+yv|X) = r\sigma(\tfrac{a}{c})|\YX - y r
  \sigma(\tfrac{a'}{c})|\HX.
  \]
It follows from this that every homology class of degree $2$ in $\X^\wt0$ 
is represented by a cycle of the form 
  \(
  r\sigma(\tfrac{a}{c})|\YX - y r \sigma(\tfrac{a'}{c})|\HX
  \)
with $r\in k[h]$ with $\deg r<\deg c=M$. In view of the formula
\eqref{eq:delta1}, one of these cycles is a boundary if and only if it is
zero, and we can then conclude that  $H_2(\X^\wt0)\cong
k[h]/(\sigma(c))\cong k[h]/(c)$.

%%%%%%%%%%%%%%%%%%%%%%%%%%%%%%%%%%%%%%%%%%%%%%%%%%%%%%
\item It follows immediately from \eqref{eq:delta2} that
$\delta(\X_2^\wt0)=\sigma(c)k[h]|\YHX$, so $H_3(\X^\wt0)\cong k[h]/(c)$.

\end{itemize}

\bigskip

We fix now $r>0$, and show that $\X^\wt r$ is exact.

\begin{itemize}
%%%%%%%%%%%%%%%%%%%%%%%%%%%%%%%%%%%%%%%%%%%%%%%%%%%%%%
\item
Let $u\in \X^\wt r_0$, so that $u=y^r p$ for some $p\in k[h]$.
Then
  \begin{equation} \label{eq:deltaX-0r}
  \delta(u) 
        = y^{r-1}\sigma^{r}(a)p|Y 
        - y^r p\tsum_i \alpha_i \q{i}{q^r} h^{i-1}|H
        + y^{r+1}\sigma(p)|X,
  \end{equation}
and we see immediately that this is zero if and only if $p=0$, so
$H_0(\X^\wt r)=0$.

%%%%%%%%%%%%%%%%%%%%%%%%%%%%%%%%%%%%%%%%%%%%%%%%%%%%%%
\item Let $u=y^{r-1}p_1|Y + y^r p_2 |H + y^{r+1}p_3 |X\in \X^\wt r_1$ with
$p_1$, $p_2$, $p_3\in k[h]$. As
  \begin{multline*}
  \delta(u) 
        = y^{r-1}\bigl(p_1 \tsum_i \alpha_i \q{i}{q^r} h^{i-1}+\sigma^r(a)p_2\bigr)|\YH  
        + y^r(-\sigma(p_1)+\sigma^{r+1}(a)p_3)|\YX \\
        - y^{r+1}\bigl(\sigma(p_2)+p_3 \tsum_i \alpha_i q^{i-1}\q{i}{q^r} h^{i-1}\bigr)|\HX,
  \end{multline*}
we have that $u$ is a cycle if and only if
  \begin{gather*}
   p_1 \tsum_i \alpha_i \q{i}{q^r} h^{i-1}+\sigma^r(a)p_2=0, \\
   \sigma^{r+1}(a)p_3=\sigma(p_1), \\
   \sigma(p_2)+p_3 \tsum_i \alpha_i q^{i-1}\q{i}{q^r} h^{i-1}=0.
  \end{gather*}
The first one follows from the other two, so we can drop it, and we can
replace the remaining ones by
  \begin{gather*}
   p_2=-\sigma^{-1}(p_3) \tsum_i \alpha_i \q{i}{q^r} h^{i-1}, \\
   p_1=\sigma^r(a)\sigma^{-1}(p_3).
  \end{gather*}
We thus obtain a description of all $1$-cycles in $\X^\wt r$ and
comparing it with \eqref{eq:deltaX-0r}, we see
that they are all boundaries: it follows that $H_1(\X^\wt r)=0$.

%%%%%%%%%%%%%%%%%%%%%%%%%%%%%%%%%%%%%%%%%%%%%%%%%%%%%%
\item
For $u=y^{r-1}p_1|\YH + y^r p_2 |\YX + y^{r+1}p_3 |\HX\in\X^\wt r_2$ with
$p_1$, $p_2$, $p_3\in k[h]$, we have
  \begin{equation*} 
  \delta(u)
        = y^r\bigl(
                \sigma(p_1) 
                + p_2\tsum_i\alpha_iq^{i-1} \q{i}{q^r} h^{i-1}
                +\sigma^{r+1}(a)p_3
             \bigr)|\YHX.
  \end{equation*}
If $u$ is a cycle, then 
  \(
   p_1=-\sigma^{-1}(p_2\sum_i\alpha_iq^{i-1}\q{i}{q^r}h^{i-1}+\sigma^{r+1}(a)p_3)
  \)
so that, in fact, 
  \[
  u=-\delta(y^{r-1}\sigma^{-1}(p_2)|Y+y^r \sigma^{-1}(p_3) |H).
  \]
It follows from this that $H_2(\X^\wt r)=0$.

%%%%%%%%%%%%%%%%%%%%%%%%%%%%%%%%%%%%%%%%%%%%%%%%%%%%%%
\item For each $p\in k[h]$, we have that
$\delta(y^{r-1}\sigma^{-1}(p)|\YH)=y^rp|\YHX$. This means that
$\delta(\X_2^r)=\X_3^r$, so $H_3(\X^\wt r)=0$. \qedhere

\end{itemize}
\end{proof}

At this point, we know most of the second page of our spectral sequence:

\begin{Corollary}\label{coro:e1}
Let $r\in\ZZ$ a weight. The dimensions of the vector spaces appearing in
the homogeneous component of weight $r$ of $E^1$ are
  \[
   \vcenter{\xymatrix@R=2ex@C=2ex{
   {} 
        &   
        &   
        & M 
        & ? 
        & ? 
        & ? 
        \\
  {}    
        &   
        & M 
        & M    
        & 0    
        & 0    
        \\ 
   {} 
        & M 
        & M    
        & 0 
        & 0 
        \\ 
   \iddots    
        & \iddots    
        & \iddots 
        & \iddots 
        \\ 
  }}
  \qquad\text{or}\qquad
  \vcenter{\xymatrix@R=2ex@C=2ex{
  {} 
	& 
	& 
	& 0 
	& ? 
	& ? 
	& ? 
        \\
  {}
	& 
	& 0 
	& 0 
	& 0 
	& 0 
	\\ 
  {}
	& 0 
	& 0 
	& 0 
	& 0 
	\\ 
  \iddots 
	& \iddots
	& \iddots
	& \iddots
	\\
  }}
  \]
depending on whether $r=0$ or not. The question marks denote vector spaces
for which we still do not know the dimension.~\qed
\end{Corollary}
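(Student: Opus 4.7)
The statement is essentially a direct repackaging of the preceding proposition in the language of the spectral sequence $E$, so the work has already been done.

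Since $E$ is built from the filtration by columns of the double complex $A \otimes_{A^e} X_{\bullet,\bullet}$, the differential on $E^0$ is the vertical one $\delta$, and $E^1$ is obtained by taking the $\delta$-homology column by column. Each column realizes (a translate of) the complex $\X$, so as indicated in the text one has $E^1_{p,q} \cong H_{p-q}(\X^{(r)})$ in weight $r$ whenever $q > 0$; the remaining entries $E^1_{p,0}$ are cokernels of the last $\delta$ hitting the bottom row and are not described by the proposition. These cokernels are precisely the entries marked by question marks in both diagrams, and their computation is deferred to the next subsections.

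For the remaining positions, the plan is simply to read off the values of $H_i(\X^{(r)})$ given by the proposition. In weight $r = 0$ one has $H_{0}(\X^{(0)}) = H_{1}(\X^{(0)}) = 0$ and $H_{2}(\X^{(0)}) \cong H_{3}(\X^{(0)}) \cong k[h]/(c)$, both of dimension $M$; hence $E^1_{p,q}$ has dimension $M$ exactly when $q > 0$ and $p-q \in \{2,3\}$, and dimension $0$ when $q > 0$ and $p-q \in \{0,1\}$, producing the first diagram. In any weight $r \neq 0$ the proposition shows that $\X^{(r)}$ is exact, so every $E^1_{p,q}$ with $q > 0$ vanishes, producing the second diagram.

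There is no genuine obstacle here: the only non-trivial task was the homology computation carried out in the proposition, and the corollary merely relabels the outputs by their coordinates in the spectral sequence.
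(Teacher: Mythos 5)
Your reading of the spectral sequence is the same as the paper's, and for every entry with $q>0$ your argument is exactly what is needed: $E^1_{p,q}\cong H_{p-q}(\X^{(r)})$, and the preceding proposition supplies those homology groups ($M=\dim k[h]/(c)$ in degrees $2,3$ for $r=0$, everything zero otherwise). There is, however, one entry of the diagrams that your recipe does not account for: the leftmost entry of the $q=0$ row, namely $E^1_{3,0}$, which the corollary asserts has dimension $M$ when $r=0$ and $0$ when $r\neq0$. You assign the entire $q=0$ row to the question marks (``these cokernels are precisely the entries marked by question marks''), and your rule ``dimension $M$ exactly when $q>0$ and $p-q\in\{2,3\}$'' therefore fails to produce that $M$. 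The repair is immediate but should be said: $E^1_{3,0}$ is the cokernel of $\delta\colon\X^{(r)}_2\to\X^{(r)}_3$, and since $\X^{(r)}$ has no terms in degree $>3$ this cokernel coincides with $H_3(\X^{(r)})$, hence is $k[h]/(c)$ for $r=0$ and $0$ for $r\neq0$. Only the three remaining $q=0$ entries $E^1_{2,0}$, $E^1_{1,0}$, $E^1_{0,0}$ are cokernels not determined by the homology of $\X^{(r)}$, and those are precisely the question marks. With that one sentence added your proof is complete and agrees with the paper's (implicit) argument.
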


%%%%%%%%%%%%%%%%%%%%%%%%%%%%%%%%%%%%%%%%%%%%%%%%%%%%%%%%%%%%%%%%%%%
\subsection{Second page}

In view of the shape of $E^1$, we have $E^\infty=E^2$. The following
proposition takes care of the latter, except for its first row, and the
rest of this section will be devoted to the computation of the few
remaining vector spaces.

\begin{Proposition}
For each $p\geq0$, the differential $d^1_{p+3,p}:E^1_{p+3,p}\to
E^1_{p+2,p}$ vanishes. In consequence, except for the vector spaces denoted
with question marks in the diagrams of Corollary~\ref{coro:e1}, the
$E^\infty$ page coincides with the page $E^1$.
\end{Proposition}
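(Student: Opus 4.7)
My plan is to compute $d^1_{p+3,p}$ directly on explicit representatives. By Corollary~\ref{coro:e1}, $E^1_{p+3,p}$ vanishes in every weight $r\neq 0$, so we may restrict attention to the weight-zero component, where a class is represented by an element $f(h)|\YHX$ with $f\in k[h]$. Since $d^1$ is induced by the horizontal differential $d$ of the double complex, I apply \eqref{eq:d2} and use the straightening rules $yf(h)=\sigma^{-1}(f)y$, $xf(h)=\sigma(f)x$, and $[f(h),h]=0$ to obtain
\[
d(f|\YHX) = (\sigma^{-1}(f)-qf)\,y|\HX \;-\; (f-q\sigma(f))\,x|\YH.
\]

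The key step is to exhibit a preimage of this element under $\delta\colon A\otimes V \to A\otimes\bigwedge^2 V$. Reading off \eqref{eq:delta1} with $p_1 = p_3 = 0$ gives $\delta(p_2|H) = \sigma(p_2)\,x|\YH - y\sigma(p_2)|\HX$, which has no $|\YX$ component---exactly matching the shape of $d(f|\YHX)$. Choosing $p_2 = qf - \sigma^{-1}(f)$ yields $\sigma(p_2) = q\sigma(f) - f$, and a direct check confirms both components agree, after rewriting $(\sigma^{-1}(f)-qf)y = y(f-q\sigma(f))$. Hence $d(f|\YHX) \in \im\delta$, so its class vanishes in $E^1_{p+2,p}$, proving $d^1_{p+3,p} = 0$. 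The argument is uniform in $p\ge 0$: for $p\ge 1$, $d(f|\YHX)$ is automatically a $\delta$-cycle by $\delta d + d\delta = 0$ and $\delta(f|\YHX) = 0$ (since $\bigwedge^4 V = 0$), while for $p=0$ the target $E^1_{2,0}$ is already a cokernel of $\delta$.

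For the ``in consequence'' clause, inspection of Corollary~\ref{coro:e1} shows that every other $d^1$ between two known $E^1$-positions has either zero source or zero target, and the higher differentials $d^r$ for $r\ge 2$, of bidegree $(-r,r-1)$, traverse the strip of zeros lying between the only two non-zero diagonals $p-q\in\{2,3\}$, hence vanish at every known entry as well. The only step of substance in the whole argument is guessing the preimage $p_2|H$, which is suggested by observing that $\delta(\,\cdot\,|H)$ involves precisely the $|\YH$ and $|\HX$ components while leaving $|\YX$ untouched, exactly mirroring the shape of $d(f|\YHX)$.
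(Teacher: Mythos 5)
Your proof is correct and follows essentially the same route as the paper: you compute $d(f|\YHX)=y(1-q\sigma)(f)|\HX-(1-q\sigma)(f)x|\YH$ and exhibit it as $\delta\bigl((q-\sigma^{-1})(f)|H\bigr)$, which is exactly the paper's identity \eqref{eq:e2-1}. The extra remarks on weights $r\neq0$ and on the higher differentials only make explicit what the paper leaves implicit.
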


\begin{proof}
A simple computation shows that if $f\in k[h]$ then
  \begin{equation}\label{eq:e2-1}
  \begin{aligned} 
   d(f|\YHX)
	&= y(1-q\sigma)(f)|\HX-(1-q\sigma)(f)x|\YH \\
	&= \delta((q-\sigma^{-1})(f)|H).
  \end{aligned}
  \end{equation}
It follows that all the differentials $d^2_{p+3,p}$ are zero, as claimed, and
the computation of $E^\infty$ is immediate except for $E^\infty_{0,0}$,
$E^\infty_{1,0}$ and $E^\infty_{2,0}$.
\end{proof}

\begin{Corollary}
For all $p\geq3$ and all $r\in\ZZ$ there are isomorphisms of $S$-modules
  \[
  HH_p(A)_{(r)} \cong 
                \begin{cases}
  		    k[h]/(c) & \text{if $r=0$;} \\
                    0   & \text{if $r\neq 0$.} 
                \end{cases}
  \]
\end{Corollary}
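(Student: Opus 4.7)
The plan is to read off $HH_p(A)_{(r)}$ for $p\ge 3$ directly from the $E^\infty$ page of the column-filtration spectral sequence $E$, combining the description of $E^1$ in Corollary~\ref{coro:e1} with the stabilization $E^\infty=E^1$ off the question-mark positions recorded in the preceding proposition. The crucial preliminary observation is that those three question marks sit at $(0,0)$, $(1,0)$, $(2,0)$ on the bottom edge of $E^1$, hence on diagonals of total degree at most $2$. Since $HH_p(A)_{(r)}$ is a successive subquotient of the entries $E^\infty_{p',q'}$ lying on the diagonal $p'+q'=p$, no question mark contributes when $p\ge 3$, and we may identify $E^\infty_{p',q'}$ with $E^1_{p',q'}$ at every position contributing to $HH_p(A)_{(r)}$.

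Next I would handle the two weight regimes separately. If $r\ne 0$, the corresponding diagram of Corollary~\ref{coro:e1} records that every determined entry of $E^1$ in weight $r$ vanishes, so the entire diagonal is zero and $HH_p(A)_{(r)}=0$. If $r=0$, the non-zero entries of $E^1$ form two staircases of copies of $k[h]/(c)$, located at the positions with $p'-q'=2$ and $p'-q'=3$ respectively (both with $q'\ge 0$). A short parity count shows that the line $p'+q'=p$ meets exactly one staircase in exactly one point: for $p$ even the intersection is the single point $((p+2)/2,(p-2)/2)$ on the $p'-q'=2$ staircase, and for $p$ odd it is $((p+3)/2,(p-3)/2)$ on the $p'-q'=3$ staircase. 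Hence the diagonal carries a single non-zero summand isomorphic to $k[h]/(c)$, and since this is the only non-trivial graded piece the filtration on $HH_p(A)_{(0)}$ has no extension ambiguity, yielding $HH_p(A)_{(0)}\cong k[h]/(c)$ as $\S$-modules, with the $\S$-module structure descending from the $\S$-linearity already present in the complex $\X$.

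The main steps all being routine once the two inputs above are in place, no real obstacle remains. The only items requiring care are the diagonal bookkeeping (confirming that every question mark has total degree at most $2$, so none can land on a diagonal of total degree $\ge 3$) and the parity split between even and odd $p$, which simply records which of the two staircases ($H_2$ or $H_3$ of $\X^{\wt 0}$) is responsible for the single surviving summand in each degree.
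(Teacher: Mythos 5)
Your argument is correct and is essentially the paper's own proof: the paper disposes of this corollary in one sentence by citing the vanishing of the $d^1_{p+3,p}$ from the preceding proposition together with convergence and the shape of $E^1$, and your write-up simply makes explicit the same diagonal bookkeeping (question marks confined to total degree $\le 2$, exactly one $k[h]/(c)$ on each diagonal $p\ge 3$ in weight $0$, nothing in weight $r\ne 0$). No discrepancies to report.
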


\noindent Notice that this result is independent of $q$.

\begin{proof}
According to the proposition and in view of the shape of the $E^1$ page of
the spectral sequence, this is a consequence of convergence.
\end{proof}

To finish the computation, we need to take care of the spots in the
spectral sequence tagged with question marks in the diagrams of
Corollary~\ref{coro:e1}. We do this in the following two propositions,
first for weight zero and then for the remaining ones.

\begin{Proposition}
When $q$ is a root of unity, we have isomorphisms of $\S$-modules
  \begin{gather*}
  HH_p(A)^\wt0 \cong E^{2\wt0}_{p,0} \cong 
        \begin{cases}
        k^{\eta(a)}, & \text{if $p=0$;}  \\
        \S \oplus \S \oplus k^{\eta(c)}, & \text{if $p=1$;} \\
        \S \oplus k[h]/(c),& \text{if $p=2$;} 
        \end{cases}
  \end{gather*}
with $\eta(f)=N-\tfrac1e\deg\N(f)$ for $f\in k[h]$ as in
Lemma~\ref{lema:coker}. On the other hand, if $q$ is of infinite order we
have isomorphisms
  \begin{gather*}
  HH_p(A)^\wt0 \cong E^{2\wt0}_{p,0} \cong 
        \begin{cases}
        k^N, & \text{if $p=0$;} \\
        k^M, & \text{if $p=1$;} \\
        k^M, & \text{if $p=2$.}
        \end{cases}
  \end{gather*}
\end{Proposition}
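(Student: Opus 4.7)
By convergence of the spectral sequence and inspection of bidegrees (all $d^r$ with $r\ge 2$ into or out of the positions $(0,0),(1,0),(2,0)$ have source or target at a place where $E^1$ is zero, because of the vanishing pattern from Corollary~\ref{coro:e1}), we have $HH_p(A)^\wt 0 \cong E^{\infty\wt 0}_{p,0} \cong E^{2\wt 0}_{p,0}$ for $p=0,1,2$. Since the preceding proposition shows $d^1\colon E^{1\wt 0}_{3,0}\to E^{1\wt 0}_{2,0}$ vanishes, the task reduces to computing the homology of
\[
 E^{1\wt 0}_{2,0} \xrightarrow{d^1} E^{1\wt 0}_{1,0} \xrightarrow{d^1} E^{1\wt 0}_{0,0}
\]
induced by the horizontal differentials.

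I would first describe the cokernels $E^{1\wt 0}_{p,0}$ explicitly, using the $\delta$-formulas \eqref{eq:delta-pri}--\eqref{eq:delta-ult}. One has $E^{1\wt 0}_{0,0}=k[h]$ trivially. For $p=1$, the relation $(p_1,p_2,p_3)\sim(p_1+\sigma(p),\,p_2-a'p,\,p_3+\sigma(p))$ admits a unique representative with $p_1=0$, giving $E^{1\wt 0}_{1,0}\cong k[h]^2$. For $p=2$, a further normalization—this time exploiting the coprimality of $\sigma(a/c)$ and $\sigma(a'/c)$—permits a Smith normal form simplification and yields $E^{1\wt 0}_{2,0}\cong k[h]\oplus k[h]/(c)$.

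Next I would write out the $d^1$ differentials via \eqref{eq:d0-y}--\eqref{eq:d2}. For $d^1\colon E^{1\wt 0}_{1,0}\to E^{1\wt 0}_{0,0}=k[h]$, the representative $(0,p_2,p_3)$ maps to $\sigma(a)p_3-\sigma^{-1}(p_3)a$; the substitution $p=\sigma^{-1}(p_3)$ identifies the image with $\psi_{a,0}(k[h])$ in the notation of Lemma~\ref{lema:coker}. Thus $E^{2\wt 0}_{0,0}=\coker \psi_{a,0}$, which by Lemma~\ref{lema:coker} produces the claimed answer in the root-of-unity case, and in the infinite-order case yields codimension exactly $N$ because $\psi_{a,0}(h^n)$ has leading coefficient $(q^{n+N}-1)\alpha_N\neq 0$ for every $n\ge 0$. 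An analogous but more intricate analysis of $d^1\colon E^{1\wt 0}_{2,0}\to E^{1\wt 0}_{1,0}$, in which both source and target are two-component, gives the remaining two positions; its image and kernel are again described in terms of $\psi_{f,l}$-type maps with $f=c$ or $f=a/c$ and appropriate $l$, so Lemma~\ref{lema:coker} delivers the $\S$-module structure, and Lemma~\ref{lema:piS} is needed to count the finite-dimensional quotient pieces.

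The main obstacle is the explicit execution of $d^1\colon E^{1\wt 0}_{2,0}\to E^{1\wt 0}_{1,0}$: one must apply the horizontal differentials \eqref{eq:d1-yh}--\eqref{eq:d1-hx} to a representative and then simplify the resulting expression modulo the relations defining $E^{1\wt 0}_{1,0}$, all while keeping track of the $\S$-module structure and of the divisibility relations among $a$, $a'$, and $c$. The two operator lemmas of Section~\ref{notaciones} were formulated precisely to turn these bookkeeping issues into clean isomorphism statements, so once the differentials are put in the form $\psi_{f,l}$ the answers in both the root-of-unity and the infinite-order cases follow.
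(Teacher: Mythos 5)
Your reduction to the bottom row is sound: the positional argument showing $E^{\infty\wt0}_{p,0}=E^{2\wt0}_{p,0}$ for $p\le 2$ is valid, your explicit descriptions of the cokernels $E^{1\wt0}_{0,0}\cong k[h]$, $E^{1\wt0}_{1,0}\cong k[h]^2$ and $E^{1\wt0}_{2,0}\cong k[h]\oplus k[h]/(c)$ agree with the normalizations of representatives that the paper carries out, and your treatment of $E^{2\wt0}_{0,0}$ as $\coker\psi_{a,0}$ is exactly the paper's argument, including the same wrinkle: Lemma~\ref{lema:coker} literally gives $\S\oplus k^{\eta(a)}$ for that cokernel when $q$ is a root of unity, so ``produces the claimed answer'' is not quite what the lemma says (the paper's own proof also quotes only the dimension $\eta(a)$ at this spot, so this is not held against you, but be aware of it).

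The genuine gap is that the two positions carrying almost all of the content, $E^{2\wt0}_{1,0}$ and $E^{2\wt0}_{2,0}$, are never computed: they are only asserted to follow from ``an analogous but more intricate analysis'' reducing everything to $\psi_{f,l}$-type maps, and that reduction is not accurate as stated. For $E^{2\wt0}_{2,0}=\ker(d^1_{2,0})$ the condition on a representative is the single equation $(1-q\sigma)\bigl(a\sigma^{-1}(p_1+p_3)+a'\sigma^{-1}(p_2)\bigr)=0$ in two effective unknowns; in the root-of-unity case solving it requires writing the right-hand side as $h^{e-1}\N(c)s_1$ via Proposition~\ref{prop:barra}\emph{(ii)} and then a B\'ezout identity for $a/c$ and $a'/c$ --- this is where the summands $\S$ and $k[h]/(c)$ actually come from, and it is not the cokernel of any $\psi_{f,l}$. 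Likewise at $E^{2\wt0}_{1,0}$ the cycle condition forces $a\sigma^{-1}(p_3)\in\S$, which is resolved again by Proposition~\ref{prop:barra} (yielding the summand $y\sigma(\overline{a})\S|X$); only the $|H$-component is governed by a map of type $\psi_{c,l}$, and even there one must first identify the image of $d^1_{2,0}$ with $(1-q\sigma)\bigl((c)\bigr)$ by computing $d$ on $2$-chains modulo $\im\delta$, and then check separately that the surviving representatives are not boundaries. Since the case split between $q$ a root of unity and $q$ of infinite order changes the answers qualitatively, and since Proposition~\ref{prop:barra} --- which you never invoke --- is the essential tool throughout, the proposal as written does not yet establish the statement for $p=1$ and $p=2$.
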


\begin{proof} 
We write $E^1_{p,0}$ instead of $E^{1\wt0}_{p,0}$ throughout this proof, to
lighten the notation.

\begin{itemize}[fullwidth, font=\bfseries]

%%%%%%%%%%%%%%%%%%%%%%%%%%%%%%%%%%%%%%%%%%%%%%%%%%%%%%%
\item[Homology at \texorpdfstring{$E^1_{2,0}$}{E^1_20}.]
Suppose $u=p_1x|\YH+p_2|\YX+yp_3|\HX \in E^0_{2,0}$, with $p_1$, $p_2$,
$p_3\in k[h]$, lives to $E^2$, so that there exists an $f \in k[h]$ such
that $d(u)=\delta(f)$. This means that
  \begin{gather*}
  (1-\sigma)(p_2)=\sigma(f), \\
  a\sigma^{-1}(p_1+p_3)-q\sigma(a)(p_1+p_3)-p_2(q\sigma(a')-a') =-a'f.
  \end{gather*}
Since $\sigma$ is a automorphism, we can eliminate $f$ obtaining the
equivalent equation
  \[
  a\sigma^{-1}(p_1+p_3) - q\sigma(a)(p_1+p_3) - p_2(q\sigma(a')-a') 
        = -a'\sigma^{-1}((1-\sigma)(p_2)),
  \]
which we can rewrite more compactly as
  \begin{equation}
  \label{eq:deg-2}
  (1-q\sigma)(a\sigma^{-1}(p_1+p_3)+a'\sigma^{-1}(p_2))=0.
  \end{equation}
It will be necessary to treat two cases separately, since the result
depends on whether $q$ is a root of unity or not. 

%%%%%%%%%%%%%%%%%%%%%%%%%%%%%%%%%%%
\begin{itemize}[label=\textbullet]

%%%%%%%%%%%%%%%%%%%%%%%%%%
\item Suppose first that \emph{$q$ is not a root of $1$}. In this case, the
map $1-q\sigma$ is a monomorphism, so \eqref{eq:deg-2} is the same as
  \[
   a\sigma^{-1}(p_1+p_3)+a'\sigma^{-1}(p_2)=0.
  \]
From this it follows that there exists $g\in k[h]$ such that
  \begin{align*}
  & p_2=-\sigma(\tfrac{a}{c})g,
  && p_1+p_3=\sigma(\tfrac{a'}{c})g.
  \end{align*}
Let $b$, $r\in k[h]$ be such that $g=b\sigma(c)+r$ with $\deg r<\deg c$.
Then $u$ is homologous to 
  \[
  u+\delta\bigl(yb|X-\sigma^{-1}(p_1)|H\bigr) 
        = \sigma(\tfrac{a}{c})r|\YX+y\sigma(\tfrac{a'}{c})r|\HX,
  \]
and we see that every homology class in $E^2_{2,0}$ is represented by a
cycle of the form 
  \begin{equation} \label{eq:2-co}
  \sigma(\tfrac{a}{c})r|\YX+y\sigma(\tfrac{a'}{c})r|\HX
  \end{equation}
with $r\in k[h]$ with $\deg r<M=\deg c$. Conversely, each element of this
form lives to $E^2$. 

Using \eqref{eq:e2-1} we see that the image of $d$ contains the image of
$\delta$. On the other hand, the coefficient of $\YX$ in every non zero
element of $\delta(\X_1^\wt0)$ is multiple of $\sigma(a)$, so in particular
it has degree at least $N$: comparing with~\eqref{eq:2-co} we see that $u$
is not in the image of~$\delta$. We can therefore conclude that these
elements are non zero in~$E^2$, so that $\dim E^2_{2,0}=M$.

%%%%%%%%%%%%%%%%%%%%%%%%%%
\item Suppose now that \emph{$q$ is a root of $1$}. In this case the
condition \eqref{eq:deg-2} is equivalent to the existence of a singular
polynomial $s\in\S$ such that 
  \begin{equation} \label{eq:aaa}
   a\sigma^{-1}(p_1+p_3)+a'\sigma^{-1}(p_2)=h^{e-1}s.
  \end{equation}
As $a(0)\neq0$, $c$ divides $s$ and it follows from
Proposition~\ref{prop:barra}\emph{(ii)} that there exists $s_1\in\S$ such
that $s=\N(c)s_1$.

Let $\alpha$, $\beta\in k[h]$ be such that
$\tfrac{a}{c}\alpha+\tfrac{a'}{c}\beta=1$; each solution of the
equation~\eqref{eq:aaa} is of the form
  \begin{gather*}
  p_3=\sigma\bigl(h^{e-1}\overline{c}s_1\alpha+\tfrac{a'}{c}g\bigr)-p_1, \\
  p_2=\sigma\bigl(h^{e-1}\overline{c}s_1\beta-\tfrac{a}{c}g\bigr)
  \end{gather*}
for some $g\in k[h]$. Let $b$, $r\in k[h]$ be such $g=bc+r$ and $\deg r<M$.
Without changing its class in $E^2$, we can replace $u$ by
$u-\delta(\sigma^{-1}(p_1)|H-y\sigma(b)|X)$, and then we see that we may
assume that 
  \begin{equation} \label{eq:e2-g2}
   u = \sigma(h^{e-1}\overline{c}s_1\beta-\tfrac{a}{c}r)|\YX
     + y\sigma(h^{e-1}\overline{c}s_1\alpha+\tfrac{a'}{c}r)|\HX.
  \end{equation}

If $u$ represents the zero class in $E^1$, then there exist
$v_1$, $v_2$, $v_3\in k[h]$ such that 
  \begin{align*}
  u &= \delta(v_1x|Y+v_2|H+yv_3|X) \\
    &= (v_1\sigma(a')+\sigma(v_2))x|\YH
        + \sigma(a)(v_3-v_1)|\YX
        - y(v_3\sigma(a' )+\sigma(v_2)|\HX.
  \end{align*}
Equating coefficients and eliminating $v_2$, we see that
  \begin{gather*}
  a\sigma^{-1}(v_3-v_1) = h^{e-1}\Bar cs_1\beta-\tfrac acr, \\
  -a'\sigma^{-1}(v_3-v_1) = h^{e-1}\Bar cs_1\alpha+\tfrac {a'}cr.
  \end{gather*}
Solving now for $s_1$ and then for $r$, we see that $u$ must be zero.

Let us show now $u$ represents a non zero element of $E^2$. Indeed, if
there exists a $p\in k[h]$ such that
  \[
  u = d(p|\YHX) = y(1-q\sigma)(p)|\HX - (1-q\sigma)(p)x|\YH,
  \]
then we must have $(1-q\sigma)(p)=0$ and 
  \begin{gather*}
  \tfrac{a}{c}r=h^{e-1}\overline{c}s_1\beta, \\
  \tfrac{a'}{c}r=-h^{e-1}\overline{c}s_1\alpha.
  \end{gather*}
Solving these equations for $s_1$ and $r$, recalling the way $\alpha$ and
$\beta$ were chosen, and using that $h^{e-1}\Bar c\neq0$, we see that
$s_1=r=0$.

We conclude in this way that every element of $E^2_{2,0}$ is represented
uniquely by a cycle of the form \eqref{eq:e2-g2}. In particular, we have a
vector space isomorphism $E^2_{2,0}\cong\S\oplus k[h]/(c)$.

\end{itemize}

%%%%%%%%%%%%%%%%%%%%%%%%%%%%%%%%%%%%%%%%%%%%%%%%%%%%%%%
\item[Homology at \texorpdfstring{$E^1_{1,0}$}{E^1_10}.] Let
$u=p_1x|Y+p_2|H+yp_3|X\in E^0_{1,0}$, with $p_1$, $p_2$, $p_3\in k[h]$, an
element which survives to $E^2$. As $u$ is homologous to
  \(
  u-\delta(\sigma^{-1}(p_1)) = (p_2+a'\sigma^{-1}(p_1))|H + y(p_3-p_1)|X,
  \)
we can suppose that $p_1=0$. 

If $u$ is a boundary, so that $u=d(f_1x|\YH+f_2|\YX+yf_3|\HX)+\delta(f_4)$,
for some $f_i\in k[h]$, looking at the coefficient of $Y$ on both sides of
this equality we find that $(1-\sigma)(f_2)+\sigma(p_4)=0$. This implies
that $p_3=0$ and that $p_2\in(1-q\sigma)((c))$. On the other hand, since
  \begin{equation} \label{eq:du-3}
 d(u)=\sigma(a)p_3-a\sigma^{-1}(p_3)=(\sigma-1)(a\sigma^{-1}(p_3)) = 0,
  \end{equation}
we see that $a\sigma^{-1}(p_3)\in\S$.

%%%%%%%%%%%%%%%%%%%%%%%%%%%%%%%%%%%
\begin{itemize}[label=\textbullet]

%%%%%%%%%%%%%%%%%%%%%%%%%%
\item Suppose first that \emph{$q$ is a root of $1$}.  Then
$p_3=\sigma(\overline{a})s$ for some $s\in\S$, according to
Proposition~\ref{prop:barra}, and thus we have
$u=p_2|H+y\sigma(\overline{a})s|X$. In view of the description given above
for the boundaries, we conclude that
  \[
  E^2_{1,0} \cong 
  	\frac{k[h]}{(1-q\sigma)((c))}|H  	
	\oplus y\sigma(\bar a)\S|X.
  \]
Using Lemma~\ref{lema:coker} we see that the first summand is isomorphic to
$k^{\eta(c)}\oplus S$.

%%%%%%%%%%%%%%%%%%%%%%%%%%
\item Suppose next that \emph{$q$ is not a root of $1$}.  In this case,
since $a$ is not constant, equation~\eqref{eq:du-3} implies that $p_3=0$.
Using again the description of boundaries, we have 
  \[
  E^2_{1,0} \cong \frac{k[h]}{(1-q\sigma)((c))}|H,
  \]
a vector space of dimension $M$.

\end{itemize}

%%%%%%%%%%%%%%%%%%%%%%%%%%%%%%%%%%%%%%%%%%%%%%%%%%%%%%%
\item[Homology at \texorpdfstring{$E^1_{0,0}$}{E^1_00}.]
We have to compute the cokernel of the map $d:A|V\to A$. One sees at once
that its image coincides with the image of the map $\psi_{a,0}:f\in
k[h]\mapsto (\sigma-1)(af)\in k[h]$ from Lemma~\ref{lema:coker}. If $q$ is
not a root of unity, it is immediate that the classes of $1$, \dots,
$h^{N-1}$ freely span $\coker\psi_{a,0}$, so that $\dim E^2_{0,0}=N$. On
the other hand, if $q$ is a root of unity, then Lemma~\ref{lema:coker}
tells us that the dimension of the cokernel of~$\psi_{a,0}$, equal to
that of $E^2_{0,0}$, is $\eta(a)=N-\frac1e\deg\N(a)$. \qedhere
\end{itemize}

\end{proof}

\begin{Proposition}
Let $r\neq0$. According to whether $r$ is regular or not, 
there are isomorphisms of $\S$-modules
  \begin{gather*}
  HH_p(A)^\wt r \cong E^{2\wt r}_{p,0} \cong
                \begin{cases}
                \S, & \text{if $p=0$;} \\
                k, & \text{if $p=1$;}  \\
                0, & \text{if $p=2$.}
                \end{cases}
    \\
\shortintertext{or}
  HH_p(A)^\wt r \cong E^{2\wt r}_{p,0} \cong
                \begin{cases}
                \S, & \text{if $p=0$;} \\
                \S\oplus\S, & \text{if $p=1$;}  \\
                \S, & \text{if $p=2$.}
                \end{cases}
  \end{gather*}
\end{Proposition}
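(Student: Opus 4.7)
The plan exploits what has already been established. For $r\ne 0$ the previous two propositions give $E^{1\,\wt r}_{p,q}=0$ for every $q\geq 1$ and $E^\infty=E^2$ for the remaining question-marked spots, so $HH_p(A)^\wt r \cong E^{2\,\wt r}_{p,0}$ for all $p$. Using the isomorphism $\Phi$ introduced in Section~\ref{notaciones} I may and do restrict to $r>0$, which gives a uniform parameterization of each graded piece by polynomials: $A^\wt r = y^rk[h]$ and each $(A\otimes\bigwedge^pV)^\wt r$ splits into summands of the form $y^{r+i}k[h]$ tensored with a basis element of $\bigwedge^pV$, with $i\in\{-1,0,1\}$ for $p=1,2$ and a single summand $y^rk[h]|\YHX$ for $p=3$.

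Substituting these parameterizations into the formulas of Section~\ref{homology} reduces the computation to linear algebra over $k[h]$. For example,
\[
d(y^{r-1}p_1|Y + y^rp_2|H + y^{r+1}p_3|X) = y^r\bigl[(1-\sigma)(p_1)+(q^r-1)hp_2+\sigma^{r+1}(a)p_3-a\sigma^{-1}(p_3)\bigr],
\]
with analogous polynomial expressions at the other positions for $d$ and $\delta$. The proof then splits into the \emph{regular} case $q^r\neq 1$ and the \emph{singular} case $q^r=1$ (which forces $e>0$ and $e\mid r$), computing separately each of $E^{2\,\wt r}_{p,0}$ for $p=0,1,2$.

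In the regular case, the factor $q^r-1$ is invertible, so the middle summand of the displayed expression already produces all of $y^rhk[h]$, absorbing the contributions of the other two; the same mechanism operates at higher wedge degrees, where invertibility of $q^r-1$ and related factors forces most cycles to become boundaries and yields the small $E^2$ pieces of the first case of the statement. In the singular case, $q^r-1$ vanishes and the identity $\sigma^{r+1}(a)=\sigma(a)$ lets one rewrite the third summand as $(\sigma-1)(a\sigma^{-1}(p_3))$; combined with the first summand, the image of $d$ at $p=0$ becomes $(1-\sigma)(k[h])$, whose cokernel in $k[h]$ is precisely $\S$. For $p=1,2$ I follow the pattern of the weight-zero analysis: decompose $k[h]=\bigoplus_{j=0}^{e-1}h^j\S$, write down canonical representatives of cycles modulo $\delta$-boundaries, and invoke Proposition~\ref{prop:barra} together with Lemma~\ref{lema:coker} to control the cokernels of the twisted multiplications by $a$, $a'$, and their interaction with $c=(a:a')$.

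The main obstacle is the singular analysis at $p=1$ and $p=2$. Because $(1-\sigma)$ has the large kernel $\S$ and because $a$ and $a'$ become comparable only modulo $c$, one has to balance carefully the kernel of $d$ against the combined image of $d$ (from one wedge degree above) and $\delta$ (from the column structure) in order to extract the claimed $\S\oplus\S$ (respectively $\S$) summands. In particular, the two $\S$-summands at $p=1$ arise from two independent families of cycles, one coming from the $|Y$-component and another from the $|X$-component, and one must check by direct computation that the chosen representatives are genuinely non-trivial in the relevant quotient. The regular case, by contrast, is essentially formal once invertibility of $q^r-1$ is propagated through all three positions.
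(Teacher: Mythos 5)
Your overall framework coincides with the paper's: reduce to $r>0$ via $\Phi$, use the exactness of the columns in nonzero weight to identify $HH_p(A)^\wt r$ with $E^{2\wt r}_{p,0}$ for $p=0,1,2$, parameterize each graded piece by polynomials, and split into the regular and singular cases. Your displayed formula for $d$ on $(A|V)^\wt r$ is correct, and the regular case does go essentially as you say: invertibility of $q^r-1$ lets one normalize coefficients down to constants and kill almost everything. (One caveat even there: at $p=0$ your own argument produces cokernel $k$, spanned by the class of $y^r$; this agrees with the claimed $\S$ only when $q$ is not a root of unity, so the case of a root of unity with $e\nmid r$ needs an explicit comment rather than being folded silently into ``the first case of the statement''.)

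The genuine gap is in the singular case at $p=1$ and $p=2$, which you correctly single out as the crux but then only describe, and for which the toolbox you propose is the wrong one. You plan to imitate the weight-zero analysis, invoking Proposition~\ref{prop:barra} and Lemma~\ref{lema:coker} to control twisted multiplications by $a$, $a'$ and their interaction with $c=(a:a')$; but in nonzero weight neither $c$ nor $\N$ enters, and the answer $\S\oplus\S$ (resp.\ $\S$) is independent of $a$. The actual mechanism is much simpler: first use $\delta$ applied to $A^\wt r$ and $(A|V)^\wt r$ to normalize representatives so that the components carrying $a$ disappear (one may take $p_3=0$ at $p=1$ and $p_2=p_3=0$ at $p=2$); after that the cycle conditions involve only $1-\sigma$, $1-q\sigma$ and the scalar $1-q^r$, and the boundaries coming from one wedge degree up contribute exactly $(1-q\sigma)(k[h])$ in the $|H$-slot. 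Since $\ker(1-\sigma)=\S$, $\ker(1-q\sigma)=h^{e-1}\S$ and $\coker(1-q\sigma)\cong h^{e-1}\S$, one obtains the free $\S$-module on the classes of $y^{r-1}|Y$ and $y^{r}h^{e-1}|H$ at $p=1$, and on the class of $y^{r-1}h^{e-1}|\YH$ at $p=2$, plus a short verification that these are not boundaries. Your assertion that the two $p=1$ summands come from the $|Y$- and $|X$-components (rather than $|Y$ and $|H$) is a symptom that this computation was not actually carried through; as written, the proposal does not establish the singular half of the statement.
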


\begin{proof}
By symmetry, we can consider just the case where $r>0$.

\begin{itemize}[fullwidth, font=\bfseries]

%%%%%%%%%%%%%%%%%%%%%%%%%%%%%%%%%%%%%%%%%%%%%%%%%%%%%%%
\item[Homology at $E^{1\wt r}_{2,0}$.]
Let $u\in E^{0\wt r}_{2,0}$ be an element representing a cycle in
$E^1$. It follows that $u=y^{r-1}p_1|\YH + y^r p_2 |\YX + y^{r+1}p_3 |\HX$ with
$p_1$, $p_2$, $p_3\in k[h]$. Without loss of generality, we can assume that
$p_2=p_3=0$; if that is not the case, we can replace $u$ by
  \[
 u+\delta\Bigl(y^{r-1}\sigma^{-1}(p_2)|Y + y^r\sigma^{-1}(p_3)|H\Bigr)
  \]
without changing the class of $u$ in $E^1$.
Computing, we find then that
  \begin{equation}\label{eq:delta-u}
   d(u) = y^{r-1}(1-q^r)p_1h|Y +y^r(p_1-q\sigma(p_1))|H.
  \end{equation}
Comparing with equation~\eqref{eq:delta-pri} we see that, since $d(u)$ is
in the image of~$\delta$, $(1-q^r)p_1=0$ and $p_1-q\sigma(p_1)=0$. If $r$
is a regular weight, it follows that $p_1=0$, so $E^{2\wt r}_{2,0}=0$. On
the other hand, if $r$ is singular, these equations are satisfied if and
only if $p_1\in h^{e-1}\S$: in this case we have $E^{2\wt r}_{2,0}\cong
h^{e-1}\S$.

%%%%%%%%%%%%%%%%%%%%%%%%%%%%%%%%%%%%%%%%%%%%%%%%%%%%%%%
\item[Homology at $E^{1\wt r}_{1,0}$.] Let $u=y^{r-1}p_1|Y + y^r p_2 |H +
y^{r+1}p_3 |X\in E^{0\wt r}_{1,0}$, with $p_1$, $p_2$, $p_3\in k[h]$, an
element which lives to $E^2$. Up to replacing $u$ by
$u-\delta(y^r\sigma^{-1}(p_3))$, we can assume that $p_3=0$, so that 
  \begin{equation} \label{eq:bbb}
  d(u)=y^r(p_1 - \sigma(p_1) + (q^r-1)h p_2) = 0.
  \end{equation}

Assume that $r$ is singular. It follows that $p_1\in\S$; moreover, in view
of  formula~\eqref{eq:delta-u}, we can reduce $p_2$ modulo the image of
$1-q\sigma$, so that we can suppose that $p_2\in h^{e-1}\S$. From equations
\eqref{eq:d1-yh}, \eqref{eq:d1-yx}, \eqref{eq:d1-hx} and
\eqref{eq:delta-pri} we see then that $u$ is not a boundary and we conclude
that $E^{2\wt r}_{1,0}\cong\S\oplus\S$ in this case, freely generated as a
$\S$-module by the classes of $y^{r-1}|Y$ and $y^r h^{e-1}|H$.

Finally, let us assume that $r$ is regular. Using again \eqref{eq:delta-u},
we see that we can now replace $u$ by an homologous element of the same
form but now with $p_1\in k$ and then, because of~\eqref{eq:bbb}, we must
have $p_2=0$. In this way, we see that $u$ must be a scalar multiple of
$y^{r-1}|Y$. If such an element is a boundary, looking at the constant term
in the formulas \eqref{eq:d1-yh}, \eqref{eq:d1-yx}, \eqref{eq:d1-hx} and
\eqref{eq:delta-pri}, we infer that $u$ is zero, therefore $E^{2\wt
r}_{1,0}$ is one-dimensional. 

%%%%%%%%%%%%%%%%%%%%%%%%%%%%%%%%%%%%%%%%%%%%%%%%%%%%%%%
\item[Homology at \texorpdfstring{$E^{1\wt r}_{0,0}$}{E^1r_00}.] Let $u=y^r
p\in E^{0\wt r}_{0,0}$. We can add to $u$ elements in the image of $d$
without changing its homology class; doing so, we can assume that $p\in S$.
Moreover, $u$ itself is then not in the image of $d$: this means that
$E^{1\wt r}_{0,0}\cong S$, freely generated by the class of $1$.~\qedhere

\end{itemize}

\end{proof}

%%%%%%%%%%%%%%%%%%%%%%%%%%%%%%%%%%%%%%%%%%%%%%%%%%%%%%%%%%%%%%%
\section{Hochschild cohomology}
\label{cohomology}

In this section we compute the Hochschild cohomology of $A$ using, as
before, a spectral sequence. Write $\hV=\hom_k(V,k)$, and let
$\{\hY,\hH,\hX\}$ be the basis of $\hV$ dual to $\{Y,H,X\}$. We identify in
the usual way $\hom_k(\bigwedge^p V,k)$ with $\bigwedge^p \hV$. Applying
the functor $\hom_{A^e}(-,A)$ to the resolution constructed in
\ref{resolution} we obtain a double complex whose cohomology is the
Hochschild cohomology $HH^{\bullet}(A)$ of~$A$. After we identify
$\hom_{A^e}(A|\bigwedge^p V |A,A)$ with $A|\bigwedge^p \hV$ in the natural
way, this double complex is
  \[
  \xymatrix{
  {} 
	& 
	& 0 \ar[d]^-{\delta} 
	& A|\bigwedge^3 \hV \ar[l]_d \ar[d]^-{\delta} 
	& A|\bigwedge^2 \hV \ar[l]_d \ar[d]^-{\delta} 
	& A| \hV \ar[l]_d \ar[d]^-{\delta} 
	& A \ar[l]_d 
	& 
        \\
  {}
	& 0 \ar[d]^-{\delta} 
	& A|\bigwedge^3 \hV \ar[l]_d \ar[d]^-{\delta} 
	& A|\bigwedge^2 \hV \ar[l]_d \ar[d]^-{\delta} 
	& A| \hV \ar[l]_d \ar[d]^-{\delta} 
	& A \ar[l]_d
	\\
   \iddots 
	& \iddots 
	& \iddots 
	& \iddots 
	& \iddots 
  }
  \]
with differentials given by
  \begin{subequations}
  \begin{gather}
  d(u) = [u,y]|\hY + [u,h]|\hH + [u,x]|\hX; \label{eq:cd-0} \\
  d(u|\hY) = [h,u]_q|\hYH - [u,x]|\hYX; \label{eq:cd-y} \\
  d(u|\hH) = [x,u]_q|\hHX - \tsum_i \lambda_i \tint_ih^s u h^t |\hYX + [u,y]_q|\hYH; \label{eq:cd-h} \\
  d(u|\hX) = [u,h]_q|\hHX + [u,y]|\hYX; \label{eq:cd-x} \\
  d(u|\hYH) = -[x,u]_q|\hYHX; \label{eq:cd-yh} \\
  d(u|\hYX) = q[h,u]|\hYHX; \label{eq:cd-yx} \\
  d(u|\hHX) = [u,y]_q|\hYHX; \label{eq:cd-hx}
  \end{gather}
  \end{subequations}
and
  \begin{subequations}
  \begin{gather}
  \delta(u|\hY) = ux; \label{eq:cdelta-y} \\
  \delta(u|\hH) = -\tsum_i\alpha_i \tint_ih^s u h^t; \label{eq:cdelta-h} \\
  \delta(u|\hX) = yu; \label{eq:cdelta-x} \\
  \delta(u|\hYH) = \tsum_i\alpha_i\tint_iq^th^suh^t|\hY+ux|\hH; \label{eq:cdelta-yh} \\
  \delta(u|\hYX) = ux|\hX - yu|\hY; \label{eq:cdelta-yx} \\
  \delta(u|\hHX) = -yu|\hH-\tsum_i\alpha_i\tint_iq^sh^suh^t|\hX; \label{eq:cdelta-hx} \\
  \delta(u|\hYHX) = ux|\hHX+\tsum_i\alpha_i\tint_iq^{i-1}h^suh^t|\hYX+yu|\hYH; \label{eq:cdelta-yhx}
  \end{gather}
  \end{subequations}
We consider the spectral sequence $E$ which arises from the filtration of
this double complex by columns.

%%%%%%%%%%%%%%%%%%%%%%%%%%%%%%%%%%%%%%%%%%%
\subsection{First Page}

In this section we deal with the first page of the spectral sequence. Let
$\Y$ be the complex
  \begin{equation}
  \label{col}
  \xymatrix{
    0 \ar[r] 
    & A|\bigwedge^3 \hV \ar[r]^-{\delta} 
    & A|\bigwedge^2 \hV \ar[r]^-{\delta} 
    & A| \hV \ar[r]^-{\delta} 
    & A 
  }
  \end{equation}
with differentials as in \eqref{eq:cdelta-y}--\eqref{eq:cdelta-yhx}. As
before, we have $E_1^{p,q}\cong H^{p-q}(\Y)$ for all $q>0$ and the vector
spaces $E_1^{p,0}$ are isomorphic to the kernels of the differentials
of~$\Y$. For each $r\in\ZZ$ we denote $\Y_\wt r$ the component of weight
$r$ in $\Y$, and extend this notation to related objects.

\begin{Proposition}\label{prop:ce1}
If $r\in\ZZ$ is non zero, then the complex $\Y_\wt r$ is exact. On the
other hand, there are $\S$-module isomorphisms
  \[
  H^p(\Y_\wt0) \cong 
    \begin{cases}
    k[h]/(c), & \text{if $0\leq p\leq 1$;} \\
    0, & \text{otherwise.}
    \end{cases}
  \]
\end{Proposition}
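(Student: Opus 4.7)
The plan is to mirror the proof of the homological counterpart for $\X$, splitting by weight and proceeding degree by degree; using the isomorphism $\Phi$ of Section~\ref{notaciones}, which interchanges weights $r$ and $-r$, I may restrict to $r\geq 0$ throughout. For weight zero, a generic element of $\Y^1_\wt 0$ has the form $yp_1|\hY + p_2|\hH + p_3 x|\hX$, and \eqref{eq:cdelta-y}--\eqref{eq:cdelta-x} give
\[
  \delta\bigl(yp_1|\hY + p_2|\hH + p_3 x|\hX\bigr) = a\sigma^{-1}(p_1 + p_3) - a'p_2,
\]
using $p_1 x = x\sigma^{-1}(p_1)$, $yx = a$, and $\tsum_i \alpha_i \tint_i h^s p h^t = a'p$ for $p\in k[h]$. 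The image is the ideal $(a,a')=(c)$, so $H^0(\Y_\wt 0) \cong k[h]/(c)$. For $H^1$, the cycle equation together with coprimality of $a/c$ and $a'/c$ forces $p_2 = (a/c)\sigma^{-1}(g)$ and $p_1 + p_3 = \sigma(a'/c)g$ for some $g\in k[h]$, with $p_1$ free; a parallel computation of $\delta: \Y^2_\wt 0 \to \Y^1_\wt 0$ on $yf_1|\hYH + f_2|\hYX + f_3 x|\hHX$ shows that the boundaries correspond exactly to $g = \sigma(c)(f_1-f_3)$, with $p_1$ arbitrary, yielding $H^1(\Y_\wt 0)\cong k[h]/(\sigma(c))\cong k[h]/(c)$. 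At degree~$2$, the cycle condition forces cycles into the form $yf_1|\hYH + f_1\sigma(a')|\hYX + f_1 x|\hHX$, and the explicit formula $\delta(p|\hYHX) = px|\hHX + p\sigma(a')|\hYX + yp|\hYH$ exhibits all such cycles as boundaries, giving $H^2=0$; at degree~$3$, the coefficient $yp$ of $|\hYH$ in $\delta(p|\hYHX)$ forces $p=0$ in the domain $A$, so $H^3=0$.

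For $r>0$, an element of $\Y^1_\wt r$ has the form $y^{r+1}p_1|\hY + y^r p_2|\hH + y^{r-1}p_3|\hX$. Using $yh = q^{-1}hy$ to collect everything into canonical form $y^r(\cdot)$ and writing $\partial_r(a):=\tsum_i\alpha_i\q{i}{q^r}h^{i-1}$, one finds
\[
  \delta\bigl(y^{r+1}p_1|\hY + y^r p_2|\hH + y^{r-1}p_3|\hX\bigr) = y^r\bigl[\sigma^{-1}(p_1)a - p_2\partial_r(a) + p_3\bigr],
\]
which is surjective onto $\Y^0_\wt r$ because $p_3$ is free, so $H^0(\Y_\wt r)=0$. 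Analogous direct calculations for $\delta:\Y^2_\wt r\to\Y^1_\wt r$ and $\delta:\Y^3_\wt r\to\Y^2_\wt r$ show that each surjects onto the cycles of the next lower degree, while injectivity on $\Y^3_\wt r$ follows from $A$ being a domain. Hence $H^p(\Y_\wt r)=0$ for all $p\geq 0$ when $r>0$.

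The main obstacle will be the $H^1$ computation in weight zero: matching the cycle description (from coprimality of $a/c$ and $a'/c$) with the image of $\delta:\Y^2_\wt 0\to\Y^1_\wt 0$ requires careful bookkeeping of the~$\sigma$'s, and one has to recognize that the apparent free direction in $p_1$ is entirely absorbed by varying~$f_2$, while the parameter $g$ gets reduced modulo $\sigma(c)$. This is the cohomological dual of the ``division by $\sigma(c)$'' step appearing in the proof for $\X_\wt 0$.
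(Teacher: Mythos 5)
Your proposal is correct and follows essentially the same route as the paper's proof: decompose by weight, compute $\delta$ explicitly in each cohomological degree, and use the coprimality of $a/c$ and $a'/c$ to parametrize the $1$-cycles in weight zero before reducing the parameter $g$ modulo $\sigma(c)$. The only cosmetic difference is that you keep $p_1$ free and observe it is absorbed by varying $f_2$, whereas the paper first normalizes $p_1=0$ by subtracting a boundary; the omitted degree $1$, $2$, $3$ computations for $r>0$ are indeed routine coefficient-matching of the same kind you carried out in degree $0$.
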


\begin{proof}
We prove this by computing the relevant homology groups:
\begin{itemize}[font=\bfseries]

%%%%%%%%%%%%%%%%%%%%%%%%%%%%%%%%%
\item If $u=p|\hYHX \in\Y^3_\wt0$ with $p\in k[h]$, then
  \begin{equation} \label{eq:y-3}
  \delta(u)=px|\hHX+p \sigma(a')|\hYX+yp|\hYH.
  \end{equation}
It is clear then that $H^3(\Y_\wt0)=0$.

%%%%%%%%%%%%%%%%%%%%%%%%%%%%%%%%%
\item Let $u=y p_1|\hYH + p_2 |\hYX + p_3 x|\hHX \in \Y^2_\wt0$ with $p_1$,
$p_2$, $p_3\in k[h]$. One can see that
  \begin{equation} \label{eq:y-1}
  \delta(u) = y(p_1 \sigma(a')-p_2)|\hY 
            + (a\sigma^{-1}(p_1-p_3))|\hH 
            + (-p_3\sigma(a') + p_2)x|\hX.
  \end{equation}
In particular, if $u$ is a cycle, $p_2=\sigma(a')p_1$ and $p_3=p_1$.
Comparing with the expression~\eqref{eq:y-3} for $2$-boundaries in $\Y$, we
see at once that $H^2(\Y_\wt0)=0$.

%%%%%%%%%%%%%%%%%%%%%%%%%%%%%%%%%
\item Finally, let $u=y p_1|\hY + p_2 |\hH + p_3 x|\hX \in \Y^1_\wt0$, with
$p_1$, $p_2$, $p_3\in k[h]$, a $1$-cycle. Since we can replace $u$ for
$u+\delta(p_1|\hH)$, without changing the homology class it represents, we
can assume that $p_1=0$, and then $\delta(u)=a\sigma^{-1}(p_3)-p_2a' = 0$.
It follows that there exists $g\in k[h]$ such that
$p_3=\sigma(\tfrac{a'}{c}g)$ and $p_2=\tfrac{a}{c}g$. Let $b$, $r\in k[h]$
such that $g=bc+r$ and $\deg r<M$. Then
  \[
  u + \delta(\sigma(b)x|\hHX) = \tfrac{a}{c}r |\hH +  \sigma(\tfrac{a'}{c} r)x|\hX
  \]
This means that all classes in $H^1(\Y_\wt0)$ can be represented by a
element of the form $\tfrac{a}{c}r |\hH +  \sigma(\tfrac{a'}{c} r)x|\hX$
with $r\in k[h]$ and $\deg r<M$ and, moreover, such an element represents
the zero class only when it is itself zero: this can be seen by looking at
the degree of the coefficient of $\hH$ appearing the formula~\eqref{eq:y-1}
for $1$-boundaries. Conversely, every such element is a cycle. We conclude
that $H^1(\Y_\wt0)\cong k[h]/(c)$.

%%%%%%%%%%%%%%%%%%%%%%%%%%%%%%%%%
\item If $u=y p_1|\hY + p_2 |\hH + p_3 x|\hX \in \Y^1_\wt0$, with $p_1$,
$p_2$, $p_3\in k[h]$, then $\delta(u)=a\sigma^{-1}(p_1+p_3)-p_2a'$, so
$H^0(\Y_\wt0)\cong k[h]/(c)$.

\end{itemize}

It remains to check, in these last two items, that the obtained
isomorphisms are $\S$-linear: this is just a matter of following the
computation, and we omit the details.

Let us now fix $r>0$.

\begin{itemize}[resume]

%%%%%%%%%%%%%%%%%%%%%%%%%%%%%%%%%
\item If $u=y^rp|\hYHX\in\Y_\wt r^3$, with $p\in k[h]$, then
  \begin{equation}\label{eq:cd-3}
   \delta(u) =
        y^{r+1} p |\hYH
        + y^r p \tsum_i \alpha_i q^{i-1} \q{i}{q^r}h^{i-1}|\hYX 
        + y^{r-1}a\sigma^{-1}(p)|\hHX.
  \end{equation}
Looking at the coefficient of $\hYH$ we see that $u$ is a cycle if and only
if $u$ is zero, so $H^3(\Y_\wt r)=0$.

%%%%%%%%%%%%%%%%%%%%%%%%%%%%%%%%%
\item Let $u=y^{r+1}p_1|\hYH + y^r p_2 |\hYX + y^{r-1}p_3|\hHX\in\Y^2_\wt
r$, with $p_1$, $p_2$, $p_3\in k[h]$. Since
  \begin{multline*}
   \delta(u) =
        y^{r+1}(p_1\tsum_i\alpha_iq^{i-1}\q{i}{q^r}h^{i-1}-p_2)|\hY 
        + y^r( a \sigma^{-1}(p_1)-p_3)|\hH \\
        + y^{r-1}(a\sigma^{-1}(p_2)-p_3\tsum_i\alpha_i\q{i}{q^r}h^{i-1})|\hX,
  \end{multline*}
it is  easy to see that $u$ is a cycle if and only if
  \begin{gather*}
  p_2=p_1\sum_i\alpha_iq^{i-1}\q{i}{q^r}h^{i-1}, \\
  p_3=a \sigma^{-1}(p_1),
  \end{gather*}
and in that case, according to~\eqref{eq:cd-3}, we have
$u=\delta(y^rp_1|\hYHX)$. We conclude that $H^2(\Y_\wt r)=0$.

%%%%%%%%%%%%%%%%%%%%%%%%%%%%%%%%%
\item Let $u=y^{r+1}p_1|\hY +y^r p_2 |\hH + y^{r-1}p_3|\hX\in\Y^1_\wt r$,
with $p_1$, $p_2$, $p_3\in k[h]$ a cycle. Without changing its homology
class, we can replace $u$ by $u+\delta(y^r p_1 |\hYX + y^{r-1}p_2|\hHX)$,
and hence we can suppose that $p_1=p_2=0$. In that case $\delta(u)=y^rp_3$,
and we see that $u=0$. It follows that $H^1(\Y_\wt r)=0$.

%%%%%%%%%%%%%%%%%%%%%%%%%%%%%%%%%
\item Finally, for each $p\in k[h]$, $\delta(y^{r-1}p|\hX)=y^rp$, so that
$\delta(\Y^1_\wt r)=\Y^0_\wt r$ and $H^0(\Y_\wt r)=0$. \qedhere

\end{itemize}
\end{proof}

\begin{Corollary}\label{coro:ce1}
If $r\in\ZZ$, the dimensions of the vector spaces appearing in the
component $E_{1\wt r}$ of $E_1$ are
  \[
  \vcenter{\xymatrix@R=2ex@C=2ex{
       &   &   & 0 & ? & ? & ? \\
       &   & 0 & 0 & M & M     \\ 
       & 0 & 0 & M & M         \\ 
     \iddots & \iddots & \iddots & \iddots \\ 
    }}
  \qquad\text{or}\qquad
  \vcenter{\xymatrix@R=2ex@C=2ex{
       &   &   & 0 & ? & ? & ? \\
       &   & 0 & 0 & 0 & 0     \\
       & 0 & 0 & 0 & 0         \\
     \iddots & \iddots & \iddots & \iddots \\ 
    }}
  \]
if $r=0$ or $r\neq0$, respectively. The question marks denote vector spaces
for which we still do not know the dimension.
\end{Corollary}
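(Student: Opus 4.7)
The plan is to read the $E_1$ page directly off Proposition \ref{prop:ce1}. As noted just after \eqref{col}, taking $\delta$-cohomology in each column of the double complex gives, for every $q > 0$, an identification $E_1^{p,q} \cong H^{p-q}(\Y_\wt r)$, while the four entries $E_1^{p,0}$ on the boundary row are the kernels of the $\delta$-maps leaving $\Y_\wt r$ and will not be pinned down at this stage.

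First I would set $r = 0$ and substitute the values $H^0(\Y_\wt 0) \cong H^1(\Y_\wt 0) \cong k[h]/(c)$ and $H^2(\Y_\wt 0) = H^3(\Y_\wt 0) = 0$ provided by Proposition \ref{prop:ce1}. This places vector spaces of dimension $M$ along the two diagonals $p - q \in \{0, 1\}$ and zero vector spaces along $p - q \in \{2, 3\}$, which is exactly the arrangement displayed in the first diagram of the statement. Next I would handle $r \neq 0$: since Proposition \ref{prop:ce1} asserts that $\Y_\wt r$ is exact, every $E_1^{p,q}$ with $q > 0$ vanishes, matching the second diagram.

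What remains in both diagrams are the four positions $E_1^{p,0}$ with $p \in \{0, 1, 2, 3\}$, which the corollary flags with question marks; these coincide with the kernels of the outgoing $\delta$-maps from the bottom row and will require a separate direct computation in the following subsection. There is no real obstacle here, since the substantive content has already been extracted in Proposition \ref{prop:ce1}; the corollary is merely a tabular restatement of that proposition, together with the observation that the boundary row demands its own analysis.
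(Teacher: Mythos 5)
Your argument is correct and is exactly the paper's (one-line) proof: substitute the cohomology of $\Y_\wt r$ computed in Proposition~\ref{prop:ce1} into the identifications $E_{1\wt r}^{p,q}\cong H^{p-q}(\Y_\wt r)$ for $q>0$, and leave the bottom-row kernels undetermined. The only imprecision is your claim that all four entries $E_{1\wt r}^{p,0}$ remain unknown: the diagrams record the one at wedge degree $3$ as $0$, and this too follows from the proposition, since $A|\bigwedge^3\hV$ is the initial term of $\Y_\wt r$, so the kernel of the outgoing $\delta$ there coincides with $H^3(\Y_\wt r)=0$.
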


\begin{proof}
This follows from the proposition and the isomorphisms $E_{1\wt
r}^{p,q}\cong H^{p-q}(\Y_\wt r)$.
\end{proof}

%%%%%%%%%%%%%%%%%%%%%%%%%%%%%%%%%%%%%%%%%%%
\subsection{The second page}

\begin{Proposition}
For each $p\geq0$, the differential $d_1^{p,p}:E_1^{p,p}\to E_1^{p+1,p}$
vanishes. The page $E_\infty$ then coincides with $E_1$, except at the
places marked with question marks in the diagrams of
Corollary~\ref{coro:ce1}, and we have
  \[
  HH^p(A)_\wt r \cong
    \begin{cases}
    k[h]/(c), & \text{if $r=0$;} \\
    0, & \text{if $r\neq0$.}
    \end{cases}
  \]
\end{Proposition}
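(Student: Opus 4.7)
The plan is to mirror the strategy used in the analogous Proposition in Section~\ref{homology}: produce an explicit chain-level identity of the form $d(\text{representative})=\delta(\text{explicit preimage})$ inside the double complex, which forces the connecting $d_1^{p,p}$ to annihilate every representative. At weight $r\neq 0$, Proposition~\ref{prop:ce1} already gives $H^0(\Y_{\wt r})=0$, so $E_1^{p,p}_{\wt r}=0$ and there is nothing to check; the substance is at weight zero.

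At $r=0$, a class in $E_1^{p,p}\cong H^0(\Y_{\wt 0})\cong k[h]/(c)$ is represented by a polynomial $f\in k[h]=\Y^0_{\wt 0}$, and $d_1^{p,p}([f])$ is represented by $d(f)\in\Y^1_{\wt 0}$. Using $[f,h]=0$ together with the commutation relations $ry=y\sigma(r)$ and $xr=\sigma(r)x$ for $r\in k[h]$, formula~\eqref{eq:cd-0} reduces to
\[
d(f)=y(\sigma-1)(f)\,|\,\hY+x(\sigma^{-1}-1)(f)\,|\,\hX.
\]
The key step is to verify the identity $d(f)=\delta\bigl(-(\sigma-1)(f)\,|\,\hYX\bigr)$. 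By~\eqref{eq:cdelta-yx}, the right-hand side equals $-(\sigma-1)(f)\,x\,|\,\hX+y(\sigma-1)(f)\,|\,\hY$, and $-(\sigma-1)(f)\,x=x(\sigma^{-1}-1)(f)$ follows at once from pushing $x$ through polynomials in $h$ via $xr=\sigma(r)x$. Hence $[d(f)]=0$ in $H^1(\Y_{\wt 0})=E_1^{p+1,p}$, which is the required vanishing.

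Once the diagonal $d_1$ is killed, a bidegree check shows that every other differential among the $M$-positions of $E_1^{\wt 0}$ either sources or targets a group $H^k(\Y_{\wt 0})$ with $k\ge 2$, which vanishes by Proposition~\ref{prop:ce1}; the analogous check in weights $r\neq 0$ is vacuous since the entire strip is already zero. Consequently $E_\infty$ agrees with $E_1$ at every non-question-mark entry, and reading off the associated graded by total degree leaves exactly one copy of $k[h]/(c)$ in weight~$0$ and nothing in weights $r\neq 0$, giving the displayed formula. The main technical step is guessing the preimage $v=-(\sigma-1)(f)\,|\,\hYX$: once one notices that $d(f)$ has no $|\hH$-component, the only $\Y^2$-summand whose $\delta$-image is supported on $|\hY$ and $|\hX$ is $|\hYX$, which essentially forces the choice.
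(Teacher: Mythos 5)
Your proof is correct and follows essentially the same route as the paper's: the paper exhibits the identity $d(h^l)=(q^l-1)yh^l|\hY-(q^l-1)h^lx|\hX=\delta\bigl(-(q^l-1)h^l|\hYX\bigr)$ on the basis $\{h^l:0\le l<M\}$ of $E_1^{p,p}$, which is exactly your identity $d(f)=\delta\bigl(-(\sigma-1)(f)|\hYX\bigr)$ specialized to $f=h^l$, and then invokes convergence. Your version merely states the chain-level identity for a general representative $f$ and spells out the (correct) degree bookkeeping for the higher differentials in slightly more detail.
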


\begin{proof}
The set of homology classes of the elements of $\{h^l:0\leq l<M\}$ is a
basis of the space $E_1^{p,p}$, and
  \[
  d(h^l)=(q^l-1)yh^l|\hY - (q^l-1)h^lx|\hX=\delta(-(q^l-1)h^l |\hYX).
  \]
It follows that $d_1^{p,p}$ is indeed zero, as claimed. The rest of the
proposition is then a consequence of the fact that the spectral sequence
$E$ converges to $HH^\bullet(A)$.
\end{proof}

\begin{Proposition}
\label{prop:co-e2-0}
If $q$ is a root of unity, then
  \begin{gather*}
  E_{2\wt0}^{p,0} \cong 
        \begin{cases}
        \S, & \text{if $p=0$;}  \\
        \S \oplus \S, & \text{if $p=1$;} \\
        \S \oplus k^{\eta(a/c)}, & \text{if $p=2$,}
        \end{cases}
  \end{gather*}
where, as in Lemma~\ref{lema:coker}, $\eta(a/c)=N-M-\deg\N(a/c)/e$, and if
$q$ has infinite order,
  \begin{gather*}
  E_{2\wt0}^{p,0} \cong 
        \begin{cases}
        k, & \text{if $p=0$;}  \\
        k, & \text{if $p=1$;} \\
        k^{N-M}, & \text{if $p=2$.}
        \end{cases}
  \end{gather*}
\end{Proposition}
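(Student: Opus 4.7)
The plan is to compute $E_{2\wt 0}^{p,0}$ for $p=0,1,2$ as the $d_1$-cohomology of the bottom row $E_{1\wt 0}^{\bullet,0}$. Here $E_{1\wt 0}^{p,0}=\ker(\delta:\Y^p_\wt0\to\Y^{p-1}_\wt0)$ by the analysis of the previous subsection, so for each~$p$ I shall parametrise this kernel explicitly using the formulas from the proof of Proposition~\ref{prop:ce1}, then compute $d$ and reduce modulo the image of $d$ coming from $E_{1\wt 0}^{p-1,0}$. The root-of-unity and generic cases will be handled side by side, distinguished by the behaviour of $\sigma-q^l$ on $k[h]$. For $p=0$, where there is no incoming $d_1$, we have $E_{1\wt 0}^{0,0}=A_\wt0=k[h]$, and the formula $d(u)=y(\sigma-1)(u)|\hY+(1-\sigma)(u)\,x|\hX$ for $u\in k[h]$ (the $|\hH$ component vanishing because $k[h]$ is commutative) yields $\ker d=\ker(\sigma-1)=\S$, so $E_{2\wt 0}^{0,0}\cong\S$, which is $k$ when $q$ has infinite order.

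For $p=1$, write $u=yp_1|\hY+p_2|\hH+p_3x|\hX$; the formula for $\delta(u)$ from Proposition~\ref{prop:ce1} parametrises $\ker\delta$ by pairs $(p_1,g)\in k[h]^2$ with $p_2=\tilde a g$ and $p_1+p_3=\sigma(\tilde{a'}g)$, where $\tilde a=a/c$ and $\tilde{a'}=a'/c$. A computation of $d(u)$, in which the substitution $a\tilde{a'}=a'\tilde a$ produces a common factor on the $\hYX$-component, gives
  \[
  d(u)=y\phi|\hYH+\sigma(a')\phi|\hYX+\phi x|\hHX,\qquad \phi=\psi_{\tilde a,1}(g)=(\sigma-q)(\tilde a g).
  \]
Since $A$ is a domain and $\sigma(a')\neq 0$, the condition $d(u)=0$ is equivalent to $g\in\ker\psi_{\tilde a,1}$, and the image of $d_1$ from $E_{1\wt 0}^{0,0}$ corresponds to $g=0$ together with $p_1\in\im(\sigma-1)$. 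Therefore
  \[
  E_{2\wt 0}^{1,0}\cong\bigl(k[h]/\im(\sigma-1)\bigr)\oplus\ker\psi_{\tilde a,1}.
  \]
In the root-of-unity case the first summand is $\S$, and Proposition~\ref{prop:barra} (applicable since $\tilde a(0)\neq 0$) identifies $\ker\psi_{\tilde a,1}$ with the free rank-one $\S$-module $h\,\overline{\tilde a}\,\S$, so the total is $\S\oplus\S$. In the generic case both summands are one-dimensional ($\S=k$, and $\ker\psi_{\tilde a,1}=0$ since $\ker(\sigma-q)=kh$ meets $(\tilde a)$ trivially), yielding~$k$.

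For $p=2$, Proposition~\ref{prop:ce1} describes $\ker\delta|_{p=2}$ at weight zero as the family $u_\phi=y\phi|\hYH+\phi\sigma(a')|\hYX+\phi x|\hHX$ with $\phi\in k[h]$. A routine calculation using \eqref{eq:cd-yh}, \eqref{eq:cd-yx} and \eqref{eq:cd-hx} shows that $d(u_\phi)=0$ for every $\phi$, so $\ker d_1=E_{1\wt 0}^{2,0}\cong k[h]$; from the $p=1$ step, $d_1:E_{1\wt 0}^{1,0}\to E_{1\wt 0}^{2,0}$ sends $u$ to $u_{\psi_{\tilde a,1}(g)}$, whence $E_{2\wt 0}^{2,0}\cong\coker\psi_{\tilde a,1}$. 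Lemma~\ref{lema:coker} with $f=\tilde a=a/c$ and $l=1$ delivers the root-of-unity answer $h\S\oplus k^{\eta(a/c)}\cong\S\oplus k^{\eta(a/c)}$. In the generic case, $\psi_{\tilde a,1}$ is injective, and an index count (the map $\sigma-q$ has one-dimensional kernel and cokernel, contributing index~$0$, while multiplication by $\tilde a$ contributes $-(N-M)$) gives $\coker\psi_{\tilde a,1}\cong k^{N-M}$. The technical heart of the argument is the $p=1$ step: the three components of $d(u)$ must be shown to collapse to the single scalar $\phi=\psi_{\tilde a,1}(g)$, which relies crucially on $a\tilde{a'}=a'\tilde a$, and it is this collapse that allows the same operator $\psi_{\tilde a,1}$ to control both the $p=1$ and the $p=2$ answers.
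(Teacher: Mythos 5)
Your proposal is correct and follows essentially the same route as the paper: the same parametrization of $\ker\delta$ in each degree, the same reduction of $E_{2\wt0}^{1,0}$ to $k[h]/\im(\sigma-1)\oplus\ker\psi_{a/c,1}$ via Proposition~\ref{prop:barra}, and the same identification $E_{2\wt0}^{2,0}\cong\coker\psi_{a/c,1}$ handled by Lemma~\ref{lema:coker}. The only (harmless) local variations are that you make the collapse $d(u)=u_{\psi_{a/c,1}(g)}$ explicit, where the paper leaves the $\hYX$-coefficient unsimplified and notes at the $p=2$ step that the second boundary equation follows from the first, and that you replace the paper's degree count for $\dim\coker\psi_{a/c,1}=N-M$ in the generic case by an index argument.
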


\begin{proof}
We write, during this proof,  $E_{r}^{p,q}$instead of $E_{r\wt 0}^{p,q}$
for simplicity.
\begin{itemize}[fullwidth, font=\bfseries]

%%%%%%%%%%%%%%%%%%%%%%%%%%%%%%%%%%%%%%%%%%%%%%%%%%%%%%%
\item[Homology at $E_1^{0,0}$.]
If $u=p\in E_1^{0,0}$, so that in fact $p\in k[h]$, we have
  \begin{equation} \label{eq:du0}
  d(p)=y(\sigma(p)-p)|\hY-(\sigma(p)-p)x|\hX.
  \end{equation}
It follows that $E_1^{0,0}=\ker(\sigma-1)=\S$.

%%%%%%%%%%%%%%%%%%%%%%%%%%%%%%%%%%%%%%%%%%%%%%%%%%%%%%%
\item[Homology at $E_1^{1,0}$.]
If $u\in E_1^{1,0}$, there exist $p_1$, $p_2\in k[h]$ such that $u=y
p_1|\hY + \tfrac{a}{d}p_2 |\hH +  (\sigma(\tfrac{a'}{d} p_2)-p_1)x|\hX$;
this is a consequence of the formulas \eqref{eq:cdelta-y},
\eqref{eq:cdelta-h} and \eqref{eq:cdelta-x} using the same reasoning as in
the third step of the proof of Proposition~\ref{prop:ce1}. Moreover, there
exist $s_1\in\S$ and $b\in k[h]$ such that $p_1=s_1+(\sigma-1)(b)$ and we
can replace $u$ by $u-d(b)$ so, in the end, we can assume that
$p_1=s_1\in\S$. In that case, $u$ is boundary only if it zero: this follows
by comparing with the coefficient of~$\hY$ in~\eqref{eq:du0}. Computing, we
find that
  \begin{multline*}
  d(u) = (\sigma-q)(\tfrac{a}{c}p_2)x|\hHX + y(\sigma-q)(\tfrac{a}{c}p_2)|\hYH \\
     +\bigl((\sigma-1)(\tfrac{aa'}{c}p_2)- \tfrac{a}{c}p_2(q\sigma(a')-a')\bigr)|\hYX.
  \end{multline*}
If $d(u)=0$, then $(\sigma-q)(\tfrac{a}{c}p_2)=0$ and $\frac{a}{c}p_2\in
h\S$; conversely, if $\frac{a}{c}p_2\in h\S$, then $u$ is a cycle. we treat
separately two cases, according to whether $q$ is a root of unity or not.

%%%%%%%%%%%%%%%%%%%%%%%%%%%%%%%%%%%
\begin{itemize}[label=\textbullet]

%%%%%%%%%%%%%%%%%%%%%%%%%%
\item Suppose first that \emph{$q$ is not a root of $1$}. As
$\tfrac{a}{c}p_2\in h\S$ and $\S=k$, then $p_2\in k$. Evaluating
$\tfrac{a}{c}p_2$ at zero, and using the hypothesis that $a(0)\neq0$, we
see that $p_2=0$. In this case, then, $u$ is a scalar multiple of
$y|\hY-x|\hX$. Since all such non zero multiples are cycles and not
boundaries, we conclude that $E_2^{1,0}$ is one dimensional, generated by
the class of $y|\hY-x|\hX$.

%%%%%%%%%%%%%%%%%%%%%%%%%%
\item Suppose now that \emph{$q$ is a root of $1$}. As $h\nmid a$, we must
have $h\mid p_2$ and $\tfrac{a}{c}\tfrac{p_2}{h}\in \S$. There exists then,
by Proposition~\ref{prop:barra}\emph{(i)}, $s_2\in \S$ such that $p_2=h
s_2\overline{\bigl(\tfrac{a}{c}\bigr)}$. This gives us a description of
homology: it is the free $\S$-module of rank~$2$ generated by the classes
of $y|\hY-x|\hX$ and
$\N(\tfrac{a}{c})h|\hH+\sigma(\tfrac{a'}{c}\overline{\tfrac{a}{c}}h)x|\hX$.

\end{itemize}

%%%%%%%%%%%%%%%%%%%%%%%%%%%%%%%%%%%%%%%%%%%%%%%%%%%%%%%
\item[Homology at $E_1^{2,0}$.]
Let $u\in E_1^{2,0}$, so in fact $u\in E_0^{2,0}$ and $\delta(u)=0$. In
view of~\eqref{eq:y-1}, there exists $p\in k[h]$ such that $u=yp|\hYH +
p\sigma(a')|\hYX +px|\hHX$.

The element $u$ is a boundary if there exist  $f_1$, $f_2\in k[h]$ such
that $u=d(y f_1|\hY + \tfrac{a}{c}f_2 |\hH +  (\sigma(\tfrac{a'}{c}
f_2)-f_1)x|\hX)$ or, making this explicit,
  \begin{gather*}
  p=(\sigma-q)(\tfrac{a}{c}f_2), \\
  \sigma(a')p= D_q(\tfrac{aa'}{c}f_2)- \tfrac{a}{c}f_2(q\sigma(a')-a').
\end{gather*}
The second equation follows from the first, and we conclude that $u$ is a
boundary if and only if $p\in\im\psi_{a/c,1}$ with $\psi_{a/c,1}$ defined
as in Lemma~\ref{lema:coker}. In other words, there is an isomorphism
$E_2^{2,0}\cong\coker\psi_{a/c,1}$. We have two cases:

%%%%%%%%%%%%%%%%%%%%%%%%%%%%%%%%%%%
\begin{itemize}[label=\textbullet]

%%%%%%%%%%%%%%%%%%%%%%%%%%
\item First, suppose that \emph{$q$ is not a root of $1$}. If
$\deg(\tfrac{a}{c})>1$, then
$\deg\psi_{a/c,1}(f)=\deg(\tfrac{a}{c})+\deg(f)$ for $f\in k[h]\setminus0$.
It follows then that $\coker\psi_{a/c,1}$ is freely spanned by the classes
of $1$, $h$, \dots, $h^{N-M-1}$, because $\im\psi_{a/c,1}$ is spanned by a
set of polynomials of each degree greater or equal to $N-M$. We conclude
that $\dim(\coker(\psi_{a/c,1}))=N-M$.

On the other hand, if $\deg(\tfrac{a}{c})=1$, we have
$\deg\psi_{a/c,1}(f)=1+\deg(f)$ for all non-constant $f\in k[h]$ and
$\deg\psi_{a/c,1}(f)=0$ for $f\in k\setminus0$, so that the cokernel is
freely spanned by the class of $h$. In particular,
$\dim\coker(\psi_{a/c,1})=1=N-M$.

%%%%%%%%%%%%%%%%%%%%%%%%%%
\item Suppose now that \emph{$q$ is a root of $1$}. We computed the
dimension of $\coker\psi_{a/c,1}$ in Lemma~\ref{lema:coker}, so that the
the dimension of $E^{2,0}_{2\wt0}$ is $\eta(a/c)$, as claimed in the
statement of the proposition. \qedhere

\end{itemize}

\end{itemize}
\end{proof}

\begin{Corollary}
If $q$ is a root of unity, then there are isomorphisms of $\S$-modules
  \begin{gather*}
  HH^p(A)_\wt0 \cong 
        \begin{cases}
        \S, & \text{if $p=0$;}  \\
        \S \oplus \S, & \text{if $p=1$;} \\
        \S \oplus k^{\eta(a/c)} \oplus k[h]/(c), & \text{if $p=2$.}
        \end{cases}
  \end{gather*}
If, on the other hand, $q$ has infinite order,
  \begin{gather*}
  HH^p(A)_\wt0 \cong 
        \begin{cases}
        k, & \text{if $p=0$;}  \\
        k, & \text{if $p=1$;} \\
        k^{N-M} \oplus k^M, & \text{if $p=2$.} 
        \end{cases}
  \end{gather*}
\end{Corollary}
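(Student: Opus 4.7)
The corollary follows directly by combining Proposition~\ref{prop:co-e2-0} with the convergence of the spectral sequence to $HH^\bullet(A)$ and the description of the $E_1$ page given by Corollary~\ref{coro:ce1}. First I would identify the non-zero contributions to each $HH^p(A)_\wt0$ for $p \in \{0,1,2\}$. In weight~$0$ the $E_1$ page is supported on the two diagonals $p'-q' \in \{0,1\}$ (for $q' \geq 1$, each spot being $k[h]/(c)$) together with the question-marked positions on the bottom row $q' = 0$. Reading off the antidiagonal $p'+q'=p$ shows: for $p=0$ only $(0,0)$; for $p=1$ only $(1,0)$ (since $(0,1)$ sits at $H^{-1}(\Y_\wt0)=0$); for $p=2$ only $(2,0)$ and $(1,1)$.

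Next I would note that the question-marked positions stabilize already at $E_2$: no higher differential $d_r$ with $r \geq 2$ can enter or leave a position $(p',0)$ with $p' \in \{0,1,2\}$, since the target of an outgoing $d_r$ has $q$-coordinate $1-r < 0$, while any incoming $d_r$ would originate at $(p'-r, r-1)$ with negative $p$-coordinate. Therefore $E_\infty^{p',0} = E_{2}^{p',0}$, whose values are given by Proposition~\ref{prop:co-e2-0}, while $E_\infty^{1,1} = E_1^{1,1} = H^0(\Y_\wt0) \cong k[h]/(c)$. This immediately yields the claims for $p=0$ and $p=1$; for $p=2$ it produces a short exact sequence of $\S$-modules
\[
  0 \to E_{2}^{2,0} \to HH^2(A)_\wt0 \to k[h]/(c) \to 0.
\]
To split this sequence $\S$-linearly I would use the identity $d(h^l) = \delta\bigl(-(q^l-1)h^l|\hYX\bigr)$ from the proof of the immediately preceding proposition, together with the fact that $d\bigl((q^l-1)h^l|\hYX\bigr) = 0$ since $[h,h^l] = 0$. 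These two facts imply that, for any $r \in A_\wt0 = k[h]$, the element with bigraded components $r$ at $(1,1)$ and $(\sigma-1)(r)|\hYX$ at $(2,0)$ is a total cocycle whose image in $E_\infty^{1,1}$ is the class of~$r$. Passing to $k[h]/(c)$-classes then produces a well-defined section $[r] \mapsto \bigl[\bigl(r,\,(\sigma-1)(r)|\hYX\bigr)\bigr]$ whose $\S$-linearity, in the root-of-unity case, reduces to $(\sigma-1)(h^e r) = h^e(\sigma-1)(r)$, valid since $q^e=1$; in the infinite-order case $\S = k$ and the $\S$-linearity is automatic.

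The main obstacle I expect is precisely this splitting verification in the root-of-unity case, which the explicit cocycle above resolves. Substituting the values of $E_{2}^{p,0}$ from Proposition~\ref{prop:co-e2-0} then yields both cases of the corollary.
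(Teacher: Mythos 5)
Your reading of the $E_1$ page, the degeneration argument, and the reduction of the corollary to Proposition~\ref{prop:co-e2-0} are all correct, and for $p=0,1$ your proof is complete. (A minor slip: for $p'=2$, $r=2$ the incoming differential originates at $(0,1)$, whose $p$-coordinate is not negative; it vanishes because $E_1^{0,1}=H^{-1}(\Y_{\wt0})=0$.) You are also right that for $p=2$ there is a genuine extension problem in the root-of-unity case, since $k[h]/(c)$ is a torsion $\S$-module and $\mathrm{Ext}^1_{\S}(k[h]/(c),\S)\neq0$; the paper's own one-line proof does not address this point, so your attempt to produce an explicit $\S$-linear section is exactly the right instinct.

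However, the proposed section is not well defined. Writing $\zeta_r$ for the total cocycle with components $r$ at $(1,1)$ and $(\sigma-1)(r)|\hYX$ at $(2,0)$, the map $[r]\mapsto[\zeta_r]$ descends to $k[h]/(c)$ only if $\zeta_r$ is a total coboundary for every $r\in(c)=(a)+(a')$. A direct computation shows that $\zeta_r$ is a coboundary exactly when $r=a\sigma^{-1}(P)-a'p_2$ with $P\in k[h]$ and $p_2\in\ker(\sigma-q)$, i.e.\ exactly for $r\in(a)+a'\ker(\sigma-q)$, which is strictly smaller than $(c)$ in general. Concretely, for $r=a'u$ one has $\delta(-u|\hH)=a'u$ and
\[
(\sigma-1)(a'u)|\hYX-d(-u|\hH)=\delta\bigl((\sigma-q)(u)|\hYHX\bigr),
\]
so that $[\zeta_{a'u}]$ equals the class of $(\sigma-q)(u)$ in $E_{2}^{2,0}\cong\coker\psi_{a/c,1}$. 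For $u=1$ this is the class of $1-q$, which is nonzero whenever, for instance, $q$ has infinite order and $\deg(a/c)\geq2$ (the cokernel is then freely spanned by $1,\dots,h^{N-M-1}$), and likewise for generic $a$ when $q$ is a root of unity. Thus $[a']=0$ in $E_\infty^{1,1}$ while your recipe assigns it a nonzero element of $F^2HH^2(A)_{\wt0}$. In the infinite-order case this is harmless, since $\S=k$ and any $k$-linear section exists automatically; but in the root-of-unity case --- the only case in which the splitting is actually at issue --- your argument leaves the claimed $\S$-module decomposition of $HH^2(A)_{\wt0}$ unproved.
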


\begin{proof}
This follows from the proposition and the convergence of the spectral
sequence.
\end{proof}

\begin{Remark}
In the computation of the Hochschild cohomology the fact that $a(0)\neq 0$
is only used in the proof of the Proposition~\ref{prop:co-e2-0}. In the
case when $q$ is not a root of $1$, using an analogous reasoning one can
prove that if $a(0)=0$ and $a\neq h^N$ then the same result holds. If
instead $a=h^N$ then
  \begin{gather*}
  E_{2\wt0}^{p,0} \cong 
        \begin{cases}
        k, & \text{if $p=0$;}  \\
        k^2, & \text{if $p=1$;} \\
        k^{N-M+1}, & \text{if $p=2$.}
        \end{cases}
  \end{gather*}
On the other hand, if $q$ is a root of $1$ then
\begin{gather*}
  E_{2\wt0}^{p,0} \cong 
        \begin{cases}
        \S, & \text{if $p=0$;}  \\
        \S \oplus \S, & \text{if $p=1$;} \\
        \S \oplus k^{\eta(a/(ch))+1} & \text{if $p=2$.}
        \end{cases}
\end{gather*}
This difference is to be expected because, for example, when $a=h^N$ we have gradings on
$A$ such that $\deg h = 1$ and $\deg x+\deg y = N$. The eulerian derivation induced by
one of these gradings is a non zero class in $HH^1(A)$, which is not cohomologous
to the induced by the weight.
\end{Remark}

\begin{Proposition}
Let $r\neq0$. According to whether $r$ is regular or not, there are
isomorphisms of $\S$-modules
  \begin{gather*}
  E_{2\wt r}^{p,0} \cong
                \begin{cases}
                0, & \text{if $p=0$;} \\
                0, & \text{if $p=1$;}  \\
                0, & \text{if $p=2$.}
                \end{cases}
   \\
\shortintertext{or}
  E_{2\wt r}^{p,0} \cong
                \begin{cases}
                \S, & \text{if $p=0$;} \\
                \S\oplus\S, & \text{if $p=1$;}  \\
                \S, & \text{if $p=2$.}
                \end{cases}
  \end{gather*}
\end{Proposition}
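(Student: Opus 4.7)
By symmetry via $\Phi$ (see Section~\ref{notaciones}), I may assume $r>0$. Since $\Y_\wt r$ is exact by Proposition~\ref{prop:ce1}, the first page of~$E$ in weight~$r$ is concentrated in the bottom row at $(p,0)$ for $p\in\{0,1,2\}$; the differentials $d_s$ for $s\ge 2$ vanish there for dimension reasons, so $E_\infty=E_2$ and everything reduces to computing the cohomology of the three-term complex $(E_{1\wt r}^{\bullet,0},d_1)$, with $d_1$ induced by the horizontal differential~$d$. Explicitly $E_{1\wt r}^{0,0}=y^rk[h]$; elements of $E_{1\wt r}^{1,0}$ correspond to pairs $(p_Y,p_H)\in k[h]^2$ with $p_X=-a\sigma^{-1}(p_Y)+p_HR_r$ determined by $\delta(u)=0$, where $R_r=\sum_i\alpha_i\q{i}{q^r}h^{i-1}$; and $E_{1\wt r}^{2,0}$ is parametrised by $p\in k[h]$ via $v=\delta(y^rp|\hYHX)$.

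For $E_{2\wt r}^{0,0}$ the plan is to compute $d(y^rp)$ from \eqref{eq:cd-0} and note that its three components vanish iff $(\sigma-1)(p)=0$, $(1-q^r)hp=0$, and $a\sigma^{-1}(p)=\sigma^r(a)p$; in the regular case the middle condition forces $p=0$, while in the singular case ($q^r=1$ and $\sigma^r=\Id$) the three conditions collapse to $p\in\S$. For $E_{2\wt r}^{1,0}$, a coboundary $d(y^rf)$ shifts $(p_Y,p_H)$ by $((\sigma-1)(f),(1-q^r)hf)$. In the regular case I would use this to reduce $p_H$ to a constant, and then the $\hYH$-component of $d(u)$, namely $q(1-q^r)hp_Y+(\sigma-q)(p_H)$, forces the representative to vanish by comparing constant and positive-degree terms. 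In the singular case the same component gives $p_H\in h\S$, and the key algebraic identity $q\sigma(a')-a'=R_r'$, with $R_r'=\sum_i\lambda_i\q{i}{q^r}h^{i-1}$ (using $\lambda_i=(q^i-1)\alpha_i$), shows that the $\hYX$- and $\hHX$-components of $d(u)$ vanish automatically; combined with the freedom $p_Y\in\S$ modulo $\im(\sigma-1)$, this yields $\S\oplus\S$.

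For $E_{2\wt r}^{2,0}$, I would ask when a class $v=\delta(y^rp|\hYHX)$ equals $d(w)$ for some $w=y^{r+1}f_Y|\hY+y^rf_H|\hH+y^{r-1}f_X|\hX$ in $E_{1\wt r}^{1,0}$, with $f_X=-a\sigma^{-1}(f_Y)+f_HR_r$ imposed by the $\delta$-constraint. Using the identities above, the three coefficient equations of $d(w)=v$ collapse, in the singular case, to the single condition $(\sigma-q)(f_H)=p$, whose cokernel is $h\S\cong\S$, giving $E_{2\wt r}^{2,0}\cong\S$. In the regular case the system admits the explicit solution $f_H=p_0/(1-q)$ and $f_Y=p_{\ge 1}/(q(q^r-1)h)$ after splitting $p=p_0+p_{\ge 1}$ into constant and positive-degree parts, so $E_{2\wt r}^{2,0}=0$. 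The main obstacle is verifying the algebraic identities (especially $q\sigma(a')-a'=R_r'$ and the analogous relation linking $R_r$, $R_r'$ and $R_r^{YX}=\sum_i\alpha_iq^{i-1}\q{i}{q^r}h^{i-1}$) that make the $\hYX$- and $\hHX$-equations collapse onto the $\hYH$-equation; this is what makes the singular-case computation tractable and shows the regular-case splitting is consistent with all three equations.
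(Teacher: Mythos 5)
Your proposal is correct and follows essentially the same route as the paper: reduce to $r>0$ by the symmetry $\Phi$, note that the exactness of $\Y_\wt r$ concentrates everything in the bottom row, parametrize the cycles exactly as the paper does (pairs $(p_Y,p_H)$ with $p_X$ determined at $p=1$, a single polynomial via $\delta(y^rp|\hYHX)$ at $p=2$), and reduce modulo coboundaries using $\sigma-1$, $\sigma-q$ and multiplication by $(1-q^r)h$, splitting into the regular and singular cases. The ``main obstacle'' you flag is in fact automatic: since $d$ and $\delta$ anticommute, $d(w)$ lands in $\ker\delta=\im\bigl(\delta|_{\Y^3_\wt r}\bigr)$, which is injectively determined by its $\hYH$-coefficient, so only the $\hYH$-equation needs to be solved and the $\hYX$- and $\hHX$-equations collapse for free.
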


\begin{proof}
\begin{itemize}[fullwidth, font=\bfseries]

%%%%%%%%%%%%%%%%%%%%%%%%%%%%%%%%%%%%%%%%%%%%%%%%%%%%%%%
\item[Homology at $E_{1\wt r}^{0,0}$.]
Let $u\in E_{0\wt r}^{0,0}$, so that $u=y^rp$ for some $p\in k[h]$. Since
  \begin{equation} \label{eq:wer}
  d(u) = y^{r+1}(\sigma(p)-p)|\hY
       + (1-q^r)ph|\hH
       + y^{r-1}(a\sigma^{-1}(p)-\sigma^{r}(a)p)|\hX,
  \end{equation}
$u$ is a non zero cycle if and only if $r$ is a singular weight and $p\in \S$.

%%%%%%%%%%%%%%%%%%%%%%%%%%%%%%%%%%%%%%%%%%%%%%%%%%%%%%%
\item[Homology at $E_{1\wt r}^{1,0}$.]
If $u\in E_{1\wt r}^{1,0}$, then there exist $p_1$, $p_2$, $p_3\in k[h]$
such that $u=y^{r+1}p_1|\hY+y^{r}p_2|\hH+y^{r-1}p_3|\hX$ and $\delta(u)=0$.
This condition implies immediately, using \eqref{eq:cdelta-y},
\eqref{eq:cdelta-h} and \eqref{eq:cdelta-x}, that $p_3=p_2\sum_i \alpha_i
\q{i}{q^r} h^{i-1}-a\sigma^{-1}(p_1)$. Let us suppose now that $d(u)=0$.
%%%%%%%%%%%%%%%%%%%%%%%%%%%%%%%%%%%
\begin{itemize}[label=\textbullet]

%%%%%%%%%%%%%%%%%%%%%%%%%%
\item If \emph{$r$ is regular}, we can replace $u$ by
$u-d((1-q^r)^{-1}y^r(p_2-p_2(0))/h)$ without changing its homology class,
and this amounts to assuming initially that $p_2\in k$. In that case, it is
easy to see that the coefficient of $\hYH$ in $d(u)$ is $y^{r+1}(q
(q^{r}-1)hp_1+(1-q)p_2)=0$ and, then, $p_1=p_2=0$. Similarly, looking at
the coefficient of~$\hHX$, we can conclude that $p_3=0$. 

%%%%%%%%%%%%%%%%%%%%%%%%%%
\item If \emph{$r$ is singular}, there exist $b\in k[h]$ and $s_1\in S$
such that $p_1=\sigma(b)-b+s_1$; by replacing $u$ by $u-d(y^rb)$, which we
may do as it does not change the homology class, we may assume that
$p_1=s_1\in S$. Computing, we find that 
  \[
  d(u) = y^{r+1}(\sigma-q)(p_2)|\hYH 
       + y^r\sigma(a')(\sigma-q)(p_2)|\hYX 
       + y^{r-1}a(\sigma-q)(\sigma^{-1}(p_2))|\hHX, 
  \]
and it is clear that this vanishes exactly when $p_2\in h\S$. We see that
every element of $E_{2\wt r}^{1,0}$ is represented by an element in the
$\S$-submodule generated by the elements
  \begin{align*}
  &y^{r+1}|\hY - y^{r-1}a|\hX
  &&y^r h|\hH +y^{r-1}a'h|\hX.
  \end{align*}
Comparing with~\eqref{eq:wer}, it is easy to see that this submodule does
not contain non zero boundaries, so $E_{2\wt r}^{1,0}$ is $\S$-free of
rank~$2$.

\end{itemize}

%%%%%%%%%%%%%%%%%%%%%%%%%%%%%%%%%%%%%%%%%%%%%%%%%%%%%%%
\item[Homology at $E_{1\wt r}^{2,0}$.]
Let $u=y^{r+1}p_1|\hYH+y^{r}p_2|\hYX+y^{r-1}p_3|\hHX\in E_{1\wt r}^{2,0}$.
%%%%%%%%%%%%%%%%%%%%%%%%%%%%%%%%%%%
\begin{itemize}[label=\textbullet]

%%%%%%%%%%%%%%%%%%%%%%%%%%
\item If \emph{$r$ is regular}, let $b_i=(p_i-p_i(0))(q(q^r-1)h)^{-1}$ for
$i\in\{1,3\}$. We may replace $u$ by $u-d(b_1|\hY+b_3|\hX)$, and a
computation using~\eqref{eq:cd-y} and~\eqref{eq:cd-x} shows that this means
that we can assume that $p_1$,~$p_3\in k$. Using now~\eqref{eq:cdelta-yh},
~\eqref{eq:cdelta-yx} and~\eqref{eq:cdelta-hx}, we easily see that
$\delta(u)=0$ if and only if $u=0$. It follows that in this case $E_{2\wt
r}^{2,0}=0$.

%%%%%%%%%%%%%%%%%%%%%%%%%%
\item To finish, suppose next that \emph{$r$ is singular}. Since
  \[
  \delta(u) = y^{r+1}(\sigma(a')p_1-p_2)|\hY 
            + y^r(a\sigma^{-1}(p_1)-p_3)|\hH
            + y^{r-1}(a\sigma^{-1}(p_2)-a'p_3)|\hX = 0,
  \]
we see that $p_3=a\sigma^{-1}(p_1)$ and $p_2=\sigma(a')p_1$. If $b\in k[h]$
and $s\in S$ are such that $p_1=\sigma(b)-qb+hs$, we can replace $u$ by
$u-d(y^rb|\hH+y^{r-1}ba'|\hX)$, which is
  \[
  y^{r+1}hs_1|\hYH + y^r\sigma(a')hs_1|\hYX + y^{r-1}q^{-1}ahs_1|\hHX
  \]
without changing its class in $E_{2\wt r}^{2,0}$. No element of this form
is in the image of~$d$, as one can see by looking at the coefficient
of~$\hYH$ in~\eqref{eq:cd-y}, ~\eqref{eq:cd-h} and~\eqref{eq:cd-x}, so we
can conclude that $E_{2\wt r}^{2,0}$ is a free $\S$-module generated by the
class of $y^{r+1}h|\hYH + y^r\sigma(a')h|\hYX + y^{r-1}q^{-1}ah|\hHX$.

\end{itemize}

\end{itemize}
\end{proof}

%%%%%%%%%%%%%%%%%%%%%%%%%%%%%%%%%%%%%%%%%%%%%%%%%%%%%%%%%%%%
\begin{bibdiv}
\begin{biblist}

\bib{A-I}{article}{
   author={Avramov, Luchezar L.},
   author={Iyengar, Srikanth},
   title={Gaps in Hochschild cohomology imply smoothness for commutative
   algebras},
   journal={Math. Res. Lett.},
   volume={12},
   date={2005},
   number={5-6},
   pages={789--804},
   issn={1073-2780},
   review={\MR{2189239 (2006i:13028)}},
}

\bib{A-V}{article}{
   author={Avramov, Luchezar L.},
   author={Vigu{\'e}-Poirrier, Micheline},
   title={Hochschild homology criteria for smoothness},
   journal={Internat. Math. Res. Notices},
   date={1992},
   number={1},
   pages={17--25},
   issn={1073-7928},
   review={\MR{1149001 (92m:13020)}},
   doi={10.1155/S1073792892000035},
}

\bib{Bav}{article}{
   author={Bavula, V. V.},
   title={Generalized Weyl algebras and their representations},
   language={Russian},
   journal={Algebra i Analiz},
   volume={4},
   date={1992},
   number={1},
   pages={75--97},
   issn={0234-0852},
   translation={
      journal={St. Petersburg Math. J.},
      volume={4},
      date={1993},
      number={1},
      pages={71--92},
      issn={1061-0022},
   },
   review={\MR{1171955 (93h:16043)}},
}

\bib{Bav2}{article}{
   author={Bavula, Vladimir},
   title={Global dimension of generalized Weyl algebras},
   conference={
      title={Representation theory of algebras},
      address={Cocoyoc},
      date={1994},
   },
   book={
      series={CMS Conf. Proc.},
      volume={18},
      publisher={Amer. Math. Soc.},
      place={Providence, RI},
   },
   date={1996},
   pages={81--107},
   review={\MR{1388045 (97e:16018)}},
}

\bib{BACH}{article}{
   author={BACH (Buenos Aires Cyclic Homology Group)},
   title={A Hochschild homology criterium for the smoothness of an algebra},
   journal={Comment. Math. Helv.},
   volume={69},
   date={1994},
   number={2},
   pages={163--168},
   issn={0010-2571},
   review={\MR{1282365 (95i:16008)}},
}

\bib{B-E}{article}{
   author={Bergh, Petter Andreas},
   author={Erdmann, Karin},
   title={Homology and cohomology of quantum complete intersections},
   journal={Algebra Number Theory},
   volume={2},
   date={2008},
   number={5},
   pages={501--522},
   issn={1937-0652},
   review={\MR{2429451 (2009h:16010)}},
   doi={10.2140/ant.2008.2.501},
}

\bib{B-M}{article}{
   author={Bergh, Petter Andreas},
   author={Madsen, Dag},
   title={Hochschild homology and global dimension},
   journal={Bull. Lond. Math. Soc.},
   volume={41},
   date={2009},
   number={3},
   pages={473--482},
   issn={0024-6093},
   review={\MR{2506831 (2010e:16014)}},
   doi={10.1112/blms/bdp018},
}

\bib{BGMS}{article}{
   author={Buchweitz, Ragnar-Olaf},
   author={Green, Edward L.},
   author={Madsen, Dag},
   author={Solberg, {\O}yvind},
   title={Finite Hochschild cohomology without finite global dimension},
   journal={Math. Res. Lett.},
   volume={12},
   date={2005},
   number={5-6},
   pages={805--816},
   issn={1073-2780},
   review={\MR{2189240 (2006k:16019)}},
}

\bib{FSS}{article}{
   author={Farinati, M. A.},
   author={Solotar, A.},
   author={Su{\'a}rez-{\'A}lvarez, M.},
   title={Hochschild homology and cohomology of generalized Weyl algebras},
   language={English, with English and French summaries},
   journal={Ann. Inst. Fourier (Grenoble)},
   volume={53},
   date={2003},
   number={2},
   pages={465--488},
   issn={0373-0956},
   review={\MR{1990004 (2004d:16014)}},
}

\bib{Han}{article}{
   author={Han, Yang},
   title={Hochschild (co)homology dimension},
   journal={J. London Math. Soc. (2)},
   volume={73},
   date={2006},
   number={3},
   pages={657--668},
   issn={0024-6107},
   review={\MR{2241972 (2007c:16018)}},
   doi={10.1112/S002461070602299X},
}

\bib{Ha}{article}{
   author={Happel, Dieter},
   title={Hochschild cohomology of finite-dimensional algebras},
   conference={
      title={ Ann\'ee},
      address={Paris},
      date={1987/1988},
   },
   book={
      series={Lecture Notes in Math.},
      volume={1404},
      publisher={Springer},
      place={Berlin},
   },
   date={1989},
   pages={108--126},
   review={\MR{1035222 (91b:16012)}},
}

\bib{HKR}{article}{
   author={Hochschild, G.},
   author={Kostant, Bertram},
   author={Rosenberg, Alex},
   title={Differential forms on regular affine algebras},
   journal={Trans. Amer. Math. Soc.},
   volume={102},
   date={1962},
   pages={383--408},
   issn={0002-9947},
   review={\MR{0142598 (26 \#167)}},
}

\bib{RS}{article}{
   author={Richard, Lionel},
   author={Solotar, Andrea},
   title={Isomorphisms between quantum generalized Weyl algebras},
   journal={J. Algebra Appl.},
   volume={5},
   date={2006},
   number={3},
   pages={271--285},
   issn={0219-4988},
   review={\MR{2235811 (2007j:16076)}},
   doi={10.1142/S0219498806001685},
}

\bib{Ro}{article}{
   author={Rodicio, Antonio G.},
   title={Smooth algebras and vanishing of Hochschild homology},
   journal={Comment. Math. Helv.},
   volume={65},
   date={1990},
   number={3},
   pages={474--477},
   issn={0010-2571},
   review={\MR{1069822 (91h:16017)}},
   doi={10.1007/BF02566621},
}

\bib{Ro2}{article}{
   author={Rodicio, Antonio G.},
   title={Commutative augmented algebras with two vanishing homology
   modules},
   journal={Adv. Math.},
   volume={111},
   date={1995},
   number={1},
   pages={162--165},
   issn={0001-8708},
   review={\MR{1317386 (96d:16016)}},
   doi={10.1006/aima.1995.1019},
}

\bib{smith}{article}{
   author={Smith, S. P.},
   title={A class of algebras similar to the enveloping algebra of ${\rm
   sl}(2)$},
   journal={Trans. Amer. Math. Soc.},
   volume={322},
   date={1990},
   number={1},
   pages={285--314},
   issn={0002-9947},
   review={\MR{972706 (91b:17013)}},
   doi={10.2307/2001532},
}

\bib{S-VP}{article}{
   author={Solotar, Andrea},
   author={Vigu{\'e}-Poirrier, Micheline},
   title={Two classes of algebras with infinite Hochschild homology},
   journal={Proc. Amer. Math. Soc.},
   volume={138},
   date={2010},
   number={3},
   pages={861--869},
   issn={0002-9939},
   review={\MR{2566552 (2011a:16021)}},
   doi={10.1090/S0002-9939-09-10168-5},
}

\end{biblist}
\end{bibdiv}

%%%%%%%%%%%%%%%%%%%%%%%%%%%%%%%%%%%%%%%%%%%%%%%%%%%%%%%%%%%%%%%%%%%%%%%%%%%%%%%%%%%%%%%%%%%

\vspace*{\stretch{1}}

\normalfont

\noindent Departamento de Matem\'atica,\\
Facultad de Ciencias Exactas y Naturales,\\
Universidad de Buenos Aires,\\
Ciudad Universitaria, Pabell\'on 1\\
1428, Buenos Aires, Argentina.

\bigskip

\noindent Email: \texttt{asolotar@dm.uba.ar}, \texttt{mariano@dm.uba.ar}, \texttt{qvivas@dm.uba.ar}

\end{document}